\DeclareRobustCommand{\greektext}{%
  \fontencoding{LGR}\selectfont\def\encodingdefault{LGR}}
\DeclareRobustCommand{\textgreek}[1]{\leavevmode{\greektext #1}}
\numberwithin{equation}{section}
\numberwithin{figure}{section}
\newcommand{\lyxaddress}[1]{
	\par {\raggedright #1
	\vspace{1.4em}
	\noindent\par}
}
\theoremstyle{plain}
\newtheorem{thm}{\protect\theoremname}[section]
\theoremstyle{definition}
\newtheorem{defn}[thm]{\protect\definitionname}
\theoremstyle{remark}
\newtheorem{notation}[thm]{\protect\notationname}
\theoremstyle{definition}
\newtheorem{example}[thm]{\protect\examplename}
\theoremstyle{remark}
\newtheorem{rem}[thm]{\protect\remarkname}
\theoremstyle{plain}
\newtheorem{prop}[thm]{\protect\propositionname}
\theoremstyle{plain}
\newtheorem{question}[thm]{\protect\questionname}
\theoremstyle{plain}
\newtheorem{lem}[thm]{\protect\lemmaname}
\newenvironment{lyxlist}[1]
	{\begin{list}{}
		{\settowidth{\labelwidth}{#1}
		 \setlength{\leftmargin}{\labelwidth}
		 \addtolength{\leftmargin}{\labelsep}
		 }}
	{\end{list}}
\theoremstyle{plain}
\newtheorem{cor}[thm]{\protect\corollaryname}
\theoremstyle{plain}
\newtheorem{conjecture}[thm]{\protect\conjecturename}
\providecommand{\conjecturename}{Conjecture}
\providecommand{\corollaryname}{Corollary}
\providecommand{\definitionname}{Definition}
\providecommand{\examplename}{Example}
\providecommand{\lemmaname}{Lemma}
\providecommand{\notationname}{Notation}
\providecommand{\propositionname}{Proposition}
\providecommand{\questionname}{Question}
\providecommand{\remarkname}{Remark}
\providecommand{\theoremname}{Theorem}
\begin{document}

\title{New constructions of non-regular cospectral graphs }
\author{Suliman Hamud and Abraham Berman }
\maketitle

\lyxaddress{\begin{center}
sulimanhamud@campus.technion.ac.il, berman@technion.ac.il
\par\end{center}}

\begin{center}
Deticated to Prof. Frank J. Hall, in celebration of his many contributions
to matrix theory.
\par\end{center}

\section{Abstract}

We consider two types of joins of graphs $G_{1}$ and $G_{2}$, $G_{1}\veebar G_{2}$
- the Neighbors Splitting Join and $G_{1}\underset{=}{\lor}G_{2}$
- the Non Neighbors Splitting Join, and compute the adjacency characteristic
polynomial, the Laplacian characteristic polynomial and the signless
Laplacian characteristic polynomial of these joins. When $G_{1}$
and $G_{2}$ are regular, we compute the adjacency spectrum, the Laplacian
spectrum, the signless Laplacian spectrum of $G_{1}\underset{=}{\lor}G_{2}$
and the normalized Laplacian spectrum of $G_{1}\veebar G_{2}$ and
$G_{1}\underset{=}{\lor}G_{2}$. We use these results to construct
non regular, non isomorphic graphs that are cospectral with respect
to the four matrices: adjacency, Laplacian , signless Laplacian and
normalized Laplacian.

.

\section{Introduction}

Spectral graph theory is the study of graphs via the spectrum of matrices
associated with them \textbf{\cite{brouwer2011spectra,chung1997spectral,cvetkovivc2010introduction,nica2016brief,spielman2007spectral}}.
The graphs in this paper are undirected and simple. There are several
matrices associated with a graph and we consider four of them; the
adjacency matrix, the Laplacian matrix, the signless Laplacian matrix
and the normalized Laplacian matrix. 

Let $G=\left(V\left(G\right),E\left(G\right)\right)$ be a graph with
vertex set $V\left(G\right)=\left\{ v_{1},v_{2},...,v_{n}\right\} $
and edge set $E\left(G\right)$. 
\begin{defn}
The adjacency matrix of $G$, $A\left(G\right),\textrm{is}$ defined
by;

\[
\left(A(G)\right)_{ij}=\begin{cases}
1, & \textrm{if \ensuremath{v_{i}} and \ensuremath{v_{j}} are adjacent};\\
0, & \textrm{otherwise.}
\end{cases}
\]
\end{defn}

Let $d_{i}=d_{G}\left(v_{i}\right)$ be the degree of vertex $v_{i}$
in $G$, and let $D\left(G\right)$ be the diagonal matrix with diagonal
entries $d_{1},d_{2},...,d_{n}.$ 
\begin{defn}
The Laplacian matrix, $L\left(G\right),$ and the signless Laplacian
matrix, $Q\left(G\right),$ of $G$ are defined as $L\left(G\right)=D\left(G\right)-A\left(G\right)$
and $Q\left(G\right)=D\left(G\right)+A\left(G\right)$.
\end{defn}

\begin{defn}
(\textbf{\cite{chung1997spectral}}) The normalized Laplacian matrix,
$\mathcal{L}\left(G\right),$is defined to be $\mathcal{L}\left(G\right)=I_{n}-D\left(G\right)^{-\frac{1}{2}}A\left(G\right)D\left(G\right)^{-\frac{1}{2}}$
(with the convention that if the degree of vertex $v_{i}$ in $G$
is $0$, then $(d_{i})^{-\frac{1}{2}}=0$). In other words, 
\[
\left(\mathcal{L}(G)\right)_{ij}=\begin{cases}
1, & if\text{ i=j and \ensuremath{d_{i}\neq0;}}\\
-\frac{1}{\sqrt{d_{i}d_{j}}}, & \textrm{if \ensuremath{i\neq j} and \ensuremath{v_{i}} is adjacent to \ensuremath{v_{j};} }\\
0, & \textrm{otherwise}.
\end{cases}
\]
 
\end{defn}

\begin{notation}
For an $n\times n$ matrix $M$, we denote the characteristic polynomial
$det\left(xI_{n}-M\right)$ of $M$ by $f_{M}\left(x\right)$, where
$I_{n}$ is the identity matrix of order $n$. In particular, for
a graph $G$, $f_{X\left(G\right)}\left(x\right)$ is the $X\textrm{-characteristic}$
polynomial of $G$, for $X\in\left\{ A,L,Q,\mathcal{L}\right\} $.
The roots of the $X\textrm{-characteristic}$ polynomial of $G$ are
the $X\textrm{-eigenvalues}$ of $G$ and the collection of the $X\textrm{-eigenvalues,}$
including multiplicities, is called the $X\textrm{-spectrum of \ensuremath{G.}}$ 
\end{notation}

\begin{notation}
The multiplicity of an eigenvalue $\lambda$ is denoted by a superscript
above $\lambda$.
\end{notation}

\begin{example}
The $A$-spectrum of the complete graph $K_{n}$ is $\left\{ n-1,(-1)^{[n-1]}\right\} $.
\end{example}

\begin{rem}
If
\[
\lambda_{1}\left(G\right)\geq\lambda_{2}\left(G\right)\geq\cdot\cdot\cdot\geq\lambda_{n}\left(G\right),
\]
\[
\mu_{1}\left(G\right)\leq\mu_{2}\left(G\right)\leq\cdot\cdot\cdot\leq\mu_{n}\left(G\right),
\]
\[
\nu_{1}\left(G\right)\geq\nu_{2}\left(G\right)\geq\cdot\cdot\cdot\geq\nu_{n}\left(G\right),
\]
\[
\delta_{1}\left(G\right)\leq\delta_{2}\left(G\right)\leq\cdot\cdot\cdot\leq\delta_{n}\left(G\right),
\]
are the eigenvalues of $A\left(G\right)$, $L\left(G\right)$, $Q\left(G\right)$
and $\mathcal{L}\left(G\right)$, respectively. Then $\stackrel[i=1]{n}{\sum\lambda_{i}}=0$,
$\mu_{1}\left(G\right)=0,$ $\nu_{n}\left(G\right)\geq0$ and $\delta_{1}\left(G\right)=0,$
$\delta_{n}\left(G\right)\leq2$ (equality iff $G$ is bipartite).
\end{rem}

\begin{rem}
\label{rem:ALQl}If $G$ is a r-regular graph, then $\mu_{i}\left(G\right)=r-\lambda_{i}\left(G\right)$,
$\nu_{i}\left(G\right)=r+\lambda_{i}\left(G\right)$ and $\delta_{i}\left(G\right)=1-\frac{1}{r}\lambda\left(G\right)$.
\end{rem}

\begin{defn}
Two graphs $G$ and $H$ are $X\textrm{-cospectral }$if they have
the same $X\textrm{-spectrum}$. If $X$-cospectral graphs are not
isomorphic we say that they are $\textrm{XNICS.}$
\end{defn}

\begin{defn}
Let $S$ be a subset of $\left\{ A,L,Q,\mathcal{L}\right\} .$ The
graphs $G$ and $H$ are $\textrm{SNICS}$ if they are $\textrm{XNICS}$
for all $X\in S.$ 
\end{defn}

\begin{defn}
A graph $G$ is determined by its $X\textrm{-spectrum}$ if every
graph $H$ that is $X\textrm{-cospectral}$ with $G$ is isomorphic
to $G.$

A basic probem in spectral graph theory, \textbf{\cite{van2003graphs,van2009developments},}
is determining which graphs are determined by their spectrum or finding
non isomorphic $X$-cospectral graphs. 
\end{defn}

\begin{thm}
\textup{(}\textbf{\textup{\cite{van2003graphs}}}\textup{)}\emph{
}\textup{\emph{If $G$ is regular , then the following are equivalent;}}
\end{thm}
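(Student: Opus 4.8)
The plan is to route everything through the adjacency matrix. The first observation is that for an $r$-regular graph on $n$ vertices all of the matrices in question are affine functions of $A$: by Remark~\ref{rem:ALQl}, $L(G)=rI_{n}-A(G)$ and $Q(G)=rI_{n}+A(G)$, and the adjacency matrix of the complement, $\overline{A}(G)=J_{n}-I_{n}-A(G)$, has eigenvalues that are again affine in those of $A(G)$ since $A(G)$ and $J_{n}$ share an eigenbasis. Consequently, if a second graph $H$ is \emph{known} to be $r$-regular on $n$ vertices as well, then $G$ and $H$ have the same $A$-spectrum if and only if they have the same $L$-spectrum, if and only if the same $Q$-spectrum, and so on. Thus the theorem reduces to one lemma: \emph{if $H$ is cospectral with the regular graph $G$ with respect to any one of the matrices appearing in the list, then $H$ is itself regular, with the same degree $r$ and the same order $n$.}

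For the order, $|V(H)|$ is just the number of eigenvalues, so $|V(H)|=|V(G)|=n$ in every case. For regularity from $A$-cospectrality: $A$-cospectral graphs have equal spectral radius, so $\lambda_{1}(H)=\lambda_{1}(G)=r$, and equal second moment, so $2|E(H)|=\sum_{i}\lambda_{i}(H)^{2}=\sum_{i}\lambda_{i}(G)^{2}=2|E(G)|=nr$; hence the average degree of $H$ equals $r=\lambda_{1}(H)$, and since a graph is regular exactly when its spectral radius equals its average degree (applied componentwise if $H$ is disconnected), $H$ is $r$-regular. For regularity from $L$- or $Q$-cospectrality I would compare the first two moments of the spectrum with the degree sequence: $\operatorname{tr}L=\operatorname{tr}Q=\sum_{v}d(v)$ and $\operatorname{tr}L^{2}=\operatorname{tr}Q^{2}=\sum_{v}d(v)^{2}+\sum_{v}d(v)$, so cospectrality with the $r$-regular $G$ forces $\sum_{v}d_{H}(v)=nr$ and $\sum_{v}d_{H}(v)^{2}=nr^{2}$; by Cauchy--Schwarz $\sum_{v}d_{H}(v)^{2}\ge\bigl(\sum_{v}d_{H}(v)\bigr)^{2}/n=nr^{2}$ with equality only when all degrees coincide, so again $H$ is $r$-regular.

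Putting these together finishes the proof: under any one of the cospectrality hypotheses, $H$ is an $r$-regular graph on $n$ vertices, so by the affine relations above all of the listed spectra of $H$ coincide with those of $G$, i.e.\ the conditions hold simultaneously (and, as a byproduct, $H$ is regular with the same degree). The only genuinely non-routine step is the regularity lemma, and within it the adjacency case, which relies on the sharp inequality $\lambda_{1}\ge\overline{d}$ with equality iff regular, plus a little bookkeeping for disconnected $H$; the Laplacian and signless-Laplacian cases are a short trace computation followed by Cauchy--Schwarz, and the affine-transformation part is immediate from Remark~\ref{rem:ALQl}. It is worth flagging that the normalized Laplacian cannot be adjoined to such a list: all complete bipartite graphs $K_{a,b}$ with $a+b$ fixed share the $\mathcal{L}$-spectrum $\bigl\{0,\,1^{[a+b-2]},\,2\bigr\}$, so e.g.\ $K_{1,3}$ is $\mathcal{L}$-cospectral with the $2$-regular graph $C_{4}$ without being regular — the low-order moments of $\mathcal{L}$ do not encode the degree sequence — which is exactly why the theorem is confined to $A$, $L$, $Q$ (and variants like $\overline{A}$).
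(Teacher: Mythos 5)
The paper itself offers no proof of this theorem --- it is quoted from \cite{van2003graphs} --- so there is no internal argument to compare against; what you wrote is essentially the standard route behind the cited result. Your treatment of the $A$, $L$, $Q$ part is correct: a graph cospectral with an $r$-regular graph with respect to any one of these matrices is itself $r$-regular on the same number of vertices (spectral radius versus average degree for $A$; the first two spectral moments plus Cauchy--Schwarz for $L$ and $Q$), and for regular graphs of the same degree and order these spectra determine one another affinely, so the three ``determined by its spectrum'' conditions are equivalent.

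The point that must be flagged is the normalized Laplacian, which the theorem as printed does list among the equivalent conditions, so strictly speaking your proof does not establish the stated result. But your objection is well founded: $\mathcal{L}$-cospectrality with a regular graph does not force regularity, and your example is correct --- $C_4$ is determined by its $A$-spectrum, yet $K_{1,3}$ shares its $\mathcal{L}$-spectrum $\{0,1,1,2\}$, so $C_4$ is not determined by its $\mathcal{L}$-spectrum; with the paper's definition of ``determined by its $X$-spectrum'' only the implication from $\mathcal{L}$-DS to $A$-DS survives (an $A$-cospectral mate of a regular graph is regular, hence $\mathcal{L}$-cospectral). What is true, and is all the paper actually uses in the sentence following the theorem, is the pairwise version: for two graphs that are \emph{both} regular, $X$-cospectrality for any one $X\in\{A,L,Q,\mathcal{L}\}$ implies it for all four; for the $\mathcal{L}$ case note that $\operatorname{tr}\mathcal{L}^{2}=n+n/r$ recovers the degree of a regular graph from its $\mathcal{L}$-spectrum, after which your affine argument applies. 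If you either restrict the DS equivalence to $A$, $L$, $Q$ or state and prove that pairwise cospectrality version, your argument is complete and matches what the cited reference supports.
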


\begin{itemize}
\item \emph{$G$ is determined by its $A$-spectrum,}
\item \emph{$G$ is determined by its $L$-spectrum,}
\item \emph{$G$ is determined by its $Q$-spectrum,}
\item \emph{$G$ is determined by its $\mathcal{L}$-spectrum.}
\end{itemize}
Thus, for regular graphs $G$ and $H$, we say that $G$ and $H$
are cospectral if they are $X$-cospectral with respect to any $X\in\left\{ A,L,Q,\mathcal{L}\right\} .$ 
\begin{prop}
\textup{(}\textbf{\textup{\cite{van2003graphs}}}\textup{) }\textup{\emph{Every
regular graph with less than ten vertices is determined by its spectrum. }}

\newpage{}
\end{prop}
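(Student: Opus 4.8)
The plan is to turn the statement into a single finite computation. First I would note that for any graph $G$ one has $\sum_{i=1}^{n}\lambda_i(G)^2=2|E(G)|$ and $\lambda_1(G)\ge \frac{2|E(G)|}{n}$, with equality if and only if $G$ is regular (equality forces the all-ones vector to be a top eigenvector of $A(G)$, hence every vertex degree equals $\lambda_1$). Thus the condition $n\,\lambda_1(G)=\sum_{i=1}^n\lambda_i(G)^2$ characterizes regularity and is read off from the $A$-spectrum, so every $A$-cospectral mate of a regular graph is itself regular, of the same degree $\lambda_1$. Consequently, to prove that a regular graph $G$ on $n\le 9$ vertices is determined by its $A$-spectrum it suffices to check that no other regular graph on $n$ vertices is $A$-cospectral with $G$; and then, by the Theorem quoted just above, $G$ is automatically determined by its $L$-, $Q$- and $\mathcal{L}$-spectrum as well. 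So the whole Proposition reduces to the assertion that for every $n\le 9$, pairwise non-isomorphic regular graphs on $n$ vertices have pairwise distinct $A$-spectra.

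Next I would shrink the range of parameters. If $G$ is $r$-regular on $n$ vertices then $\overline{G}$ is $(n-1-r)$-regular with $A$-spectrum $\{\,n-1-r\,\}\cup\{\,-1-\lambda_i:2\le i\le n\,\}$, and the map $G\mapsto\overline{G}$ respects both isomorphism and $A$-cospectrality; hence it is enough to treat $0\le r\le (n-1)/2$. The cases $r=0$ and $r=1$ are trivial (the empty graph; a perfect matching, unique when it exists). For $r=2$ the connected graphs are the cycles $C_k$, and $\mathrm{spec}\,C_k=\{\,2\cos(2\pi j/k):0\le j<k\,\}$ determines $k$, so a $2$-regular graph on $n\le 9$ vertices corresponds to a partition of $n$ into parts $\ge 3$ — of which there are very few — and one checks by hand that distinct partitions give distinct spectra. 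More generally a disconnected $r$-regular graph is a disjoint union of connected $r$-regular graphs, and the multiplicity of the eigenvalue $r$ equals the number of components, which reduces the disconnected case to smaller $n$ by induction; for $n\le 9$ and $r\ge 3$ the only disconnected instance occurring at all is $K_4\cup K_4$.

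It remains to handle the connected regular graphs of middle degree, and here the parity constraint that $rn$ be even, together with $3\le r\le (n-1)/2$ and $n\le 9$, leaves only the pairs $(n,r)=(8,3)$ and $(n,r)=(9,4)$. For each of these two cases I would enumerate all the connected $r$-regular graphs — using the standard catalogues of small regular graphs, or a generation procedure such as Meringer's algorithm — compute the characteristic polynomial of each adjacency matrix, and verify that within each case these polynomials are pairwise distinct. Combined with the reductions above, this proves the Proposition. The main obstacle is exactly this last step: there is no structural shortcut, and one has to be sure that the (moderately long) lists of cubic graphs on $8$ vertices and quartic graphs on $9$ vertices have been generated completely and that no coincidence among their spectra has been overlooked — which is why \cite{van2003graphs} establishes the result by direct inspection of the tables of regular graphs. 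Finally I would remark that $n\le 9$ is best possible, since non-isomorphic cospectral regular graphs already exist on $10$ vertices.
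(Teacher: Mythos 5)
Your argument is essentially correct, but note that the paper does not prove this Proposition at all: it is quoted from \cite{van2003graphs}, and even there the statement ultimately rests on enumerations of small regular graphs, not on a structural argument. What you add that the paper does not contain is a clean reduction of the citation to a small finite check: the identities $\sum_i\lambda_i^2=2|E|$ and $\lambda_1\geq 2|E|/n$ (equality iff regular) correctly show that any $A$-cospectral mate of a regular graph is regular of the same degree, the quoted equivalence theorem then reduces everything to the $A$-spectrum, complementation plus the parity of $rn$ correctly leaves only the families $r\in\{0,1,2\}$, the single disconnected case $K_{4}\cup K_{4}$, and the connected cases $(n,r)=(8,3)$ and $(9,4)$. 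All of these reductions are sound (the multiplicity of $r$ counting components, the cycle-partition analysis for $r=2$, and the parity elimination of $(7,3)$ and $(9,3)$ check out). The one caveat is that your decisive step --- that the five connected cubic graphs on $8$ vertices and the sixteen connected quartic graphs on $9$ vertices have pairwise distinct characteristic polynomials --- is described rather than carried out, so as written this is a proof modulo a finite computation; that is unavoidable for a statement of this kind and is exactly how the cited source settles it, but if you intend the write-up to be self-contained you should either tabulate those characteristic polynomials or cite the published spectral tables of small regular graphs explicitly. Your closing remark that $10$ vertices is sharp agrees with the paper's Figure of two cospectral $4$-regular graphs on $10$ vertices.
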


\begin{example}
The following graphs are regular and cospectral. They are non isomorphic
since in $G$ there is an edge that lies in three triangles but there
is no such edge in $H$.

\begin{figure}[h!]
\begin{centering}
\subfloat[$\left(G\right)$]{\includegraphics[width=0.36\textwidth]{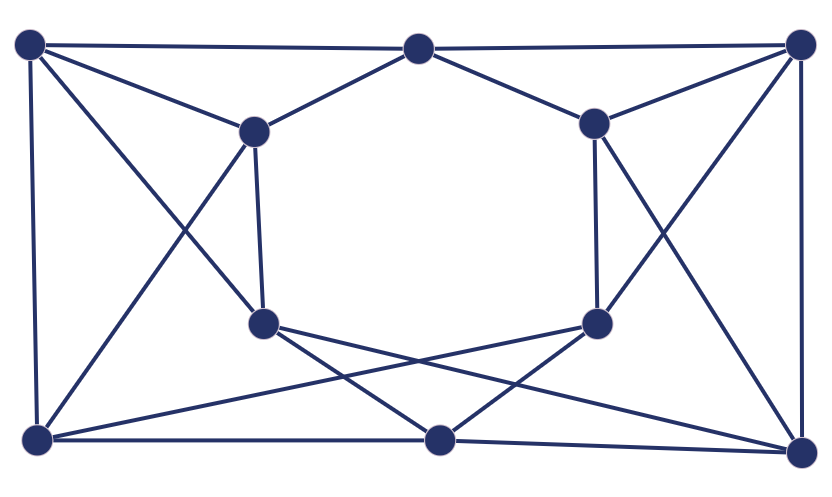}}\subfloat[$\left(H\right)$]{\includegraphics[width=0.36\textwidth]{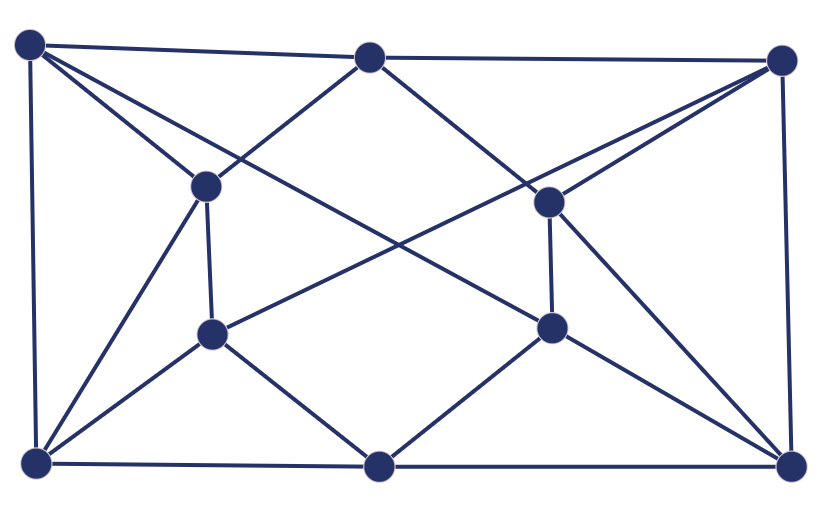}}
\par\end{centering}
\caption{\label{fig:Two-regular-non isomorphic cospectral  10 vertices}Two
regular non isomorphic cospectral graphs.}
\end{figure}
\end{example}

In recent years, several researchers studied the spectral properties
of graphs which are constructed by graph operations. These operations
include disjoint union, the Cartesian product, the Kronocker product,
the strong product, the lexicographic product, the rooted product,
the corona, the edge corona, the neighbourhood corona etc. We refer
the reader to\textbf{ \cite{barik2015laplacian,barik2007spectrum,cvetkovivc2010introduction,gopalapillai2011spectrum,hou2010spectrum,mohar1992laplace,cvetkovic1975spectra,godsil1978new,pavithra2022spectra,pavithra2021spectra,rajkumar2019spectra,rajkumar2022spectra}}
and the references therein for more graph operations and the results
on the spectra of these graphs. 

Many operations are based on the join of graphs.
\begin{defn}
(\textbf{\cite{harary1969graph}}) The join of two graphs is their
disjoint union together with all the edges that connect all the vertices
of the first graph with all the vertices of the second graph.
\end{defn}

Recently, many researchers provided several variants of join operations
of graphs and investigated their spectral properties. Some examples
are Cardoso\textbf{ \cite{cardoso2013spectra},} Indulal\textbf{ \cite{indulal2012spectrum}},
Liu and Zhang \textbf{\cite{liu2019spectra}}, Varghese and Susha
\textbf{\cite{varghese2020normalized}} and Das and Panigrahi \textbf{\cite{das2019new}}.

Butler \textbf{\cite{butler2010note}} constructed non regular bipartite
graphs which are cospectral with respect to both the adjacency and
the normalized Laplacian matrices. He asked for examples of non-regular
$\left\{ A,L,\mathcal{L}\right\} $NICS graphs. A slightly more general
question is 
\begin{question}
\label{que:Construct-non-regular}Construct non regular $\left\{ A,L,Q,\mathcal{L}\right\} $NICS
graphs. 
\end{question}

Such examples can be constructed using special join operation defined
by Lu, Ma and Zhang \textbf{\cite{lu2023spectra}} and a variant of
this operation, suggested in this paper.
\begin{defn}
(\textbf{\cite{lu2023spectra}}) Let $G_{1}$ and $G_{2}$ be two
vertex disjoint graphs with $V(G_{1})=\{u_{1},u_{2},...,u_{n}\}.$The
splitting $V$-vertex join of $G_{1}$ and $G_{2}$, denoted by $G_{1}\veebar G$,
is obtained by adding vertices $u'_{1},u'_{2},...,u'_{n}$ to $G_{1}\vee G_{2}$
and connecting $u_{i}'$ to $u_{j}$ if and only if $\left(u_{i},u_{j}\right)\in E\left(G_{1}\right)$. 
\end{defn}

We refer to the splitting $V$-vertex join as NS (Neighbors Splitting)
join and define a new type of join, NNS (Non Neighbors Splitting)
join.
\begin{defn}
\label{def:NNS}Let $G_{1}$ and $G_{2}$ be two vertex disjoint graphs
with $V\left(G_{1}\right)=\left\{ u_{1},u_{2},...,u_{n}\right\} .$
The NNS join of $G_{1}$ and $G_{2}$, denoted by $G_{1}\underset{=}{\vee}G_{2}$,
is obtained by adding vertices $u'_{1},u'_{2},...,u'_{n}$ to $G\vee G_{2}$
and connecting $u_{i}'$ to $u_{j}$ iff $\left(u_{i},u_{j}\right)\cancel{\in}E\left(G_{1}\right).$ 

\newpage{}
\end{defn}

\begin{example}
Let G$_{1}$ and $G_{2}$ be the path $P_{4}$ and the path $P_{2}$,
respectively. The graphs $P_{4}\underset{=}{\vee}P_{2}$ and $P_{4}\veebar P_{2}$
are given in Figure\textbf{ \ref{fig:Two-graphs join -}}.

\begin{figure}[h]
\begin{centering}
\subfloat[$P_{4}\veebar P_{2}$]{\includegraphics[width=0.3\textwidth]{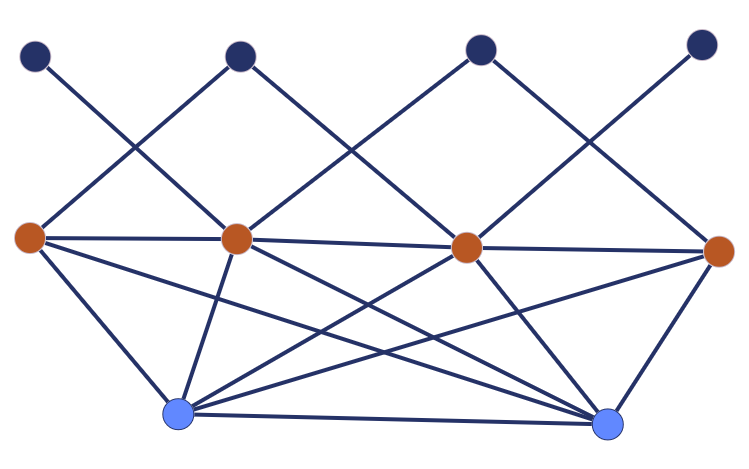}

\hspace{7bp}}\subfloat[$P_{4}\protect\underset{=}{\vee}P_{2}$]{\includegraphics[width=0.32\textwidth]{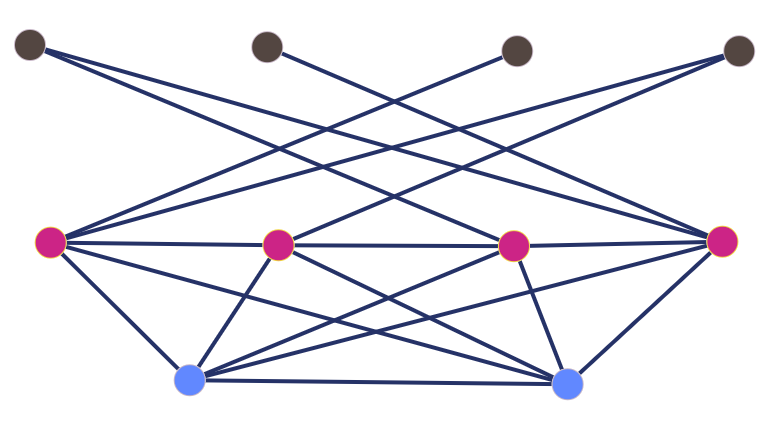}

}
\par\end{centering}
\caption{\label{fig:Two-graphs join -}The graphs $P_{4}\veebar P_{2}$ and
$P_{4}\protect\underset{=}{\vee}P_{2}$. }
\end{figure}
\end{example}

The structure of the paper is as follows; After preliminaries, we
compute the adjacency characteristic polynomial, the Laplacian characteristic
polynomial and the signless Laplacian characteristic polynomial of
$G_{1}\underset{=}{\vee}G_{2}$ and $G_{1}\veebar G_{2}$, and use
the results to construct $\left\{ A,L,Q\right\} $NICS graphs, and
finally, under regularity assumptions we compute the $A$-spectrum,
the $L$-spectrum, the $Q$-spectrum and the $\mathcal{L}$-spectrum
of NS and NNS joins and use the results to construct $\left\{ A,L,Q,\mathcal{L}\right\} $NICS
graphs.

\section{Preliminaries }
\begin{notation}
\end{notation}

\begin{itemize}
\item \emph{$\mathbf{1}_{n}$ denotes $n$$\times1$ column whose all entries
are $1$, }
\item \emph{$J_{s\times t}$$=\mathbf{1}_{s}\mathbf{1}_{t}^{T}$, $J_{s}=J_{s\times s},$}
\item \emph{$O_{s\times t}$ denotes the zero matrix of order $s\times t$,}
\item \emph{$adj\left(A\right)$ denotes the adjugate of $A.$}
\item $\overline{G}$ \emph{denotes the complement of graph $G$.}
\end{itemize}
\begin{defn}
\textbf{\label{def:The-M-coronal-=000393(x)}}(\textbf{\cite{cui2012spectrum,mcleman2011spectra}})
The coronal $\Gamma_{M}(x)$ of an n$\times$n matrix $M$ is the
sum of the entries of the inverse of the characteristic matrix of
$M$ , that is, 
\begin{equation}
\Gamma_{M}(x)=\mathbf{1}_{n}^{T}(xI_{n}-M)^{-1}\mathbf{1}_{n}.\label{eq:2.1}
\end{equation}
\end{defn}

\begin{lem}
\textbf{\textup{\label{lem:()-If-M coronal}(\cite{cui2012spectrum,mcleman2011spectra})}}
Let $M$ be an n$\times$n matrix with all row sums equal to r ( for
example, the adjacency matrix of a $r$-regular graph). Then
\end{lem}

\textbf{
\[
\Gamma_{M}(x)=\frac{n}{x-r}.
\]
}
\begin{defn}
Let $M$ be a block matrix 
\[
M=\left(\begin{array}{cc}
A & \,\,\,\,\,\,\,\,\,\,\,\,\,\,\,B\\
\\
C & \,\,\,\,\,\,\,\,\,\,\,\,\,\,D
\end{array}\right)
\]
 such that its blocks $A$ and $D$ are square. If $A$ is invertible,
the Schur complement of $A$ in $M$ is 
\[
M/A=D-CA^{-1}B
\]
and if $D$ is invertible, the Schur complement of $D$ in $M$ is
\[
M/D=A-BD^{-1}C.
\]
\end{defn}

Issai Schur proved the following lemma. 
\begin{lem}
\emph{\label{lem:(Schur-Complements,-).2.1}(}\textbf{\emph{\cite{horn2005basic})}}\emph{.
}\textup{\emph{If $D$ is invertible then, }}

\textup{\emph{
\begin{align*}
detM & =det(M/D)detD
\end{align*}
}}

\textup{\emph{and if $A$ is invertible then, 
\[
detM=det(M/A)detA.
\]
}}
\end{lem}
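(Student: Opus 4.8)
The plan is to prove the identity by exhibiting an explicit block factorization of $M$ into unipotent block-triangular factors times a block-diagonal matrix, and then invoke multiplicativity of the determinant. For the case where $D$ is invertible, I would first verify by direct block multiplication that
\[
M=\begin{pmatrix} A & B \\ C & D \end{pmatrix}
=\begin{pmatrix} I & BD^{-1} \\ O & I \end{pmatrix}
\begin{pmatrix} M/D & O \\ O & D \end{pmatrix}
\begin{pmatrix} I & O \\ D^{-1}C & I \end{pmatrix},
\]
where $M/D=A-BD^{-1}C$. Expanding the product of the last two factors gives $\begin{pmatrix} M/D & O \\ C & D \end{pmatrix}$, and multiplying on the left by the first factor restores the $(1,1)$ block to $A$ and the $(1,2)$ block to $B$. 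Taking determinants and using $\det(XY)=\det(X)\det(Y)$ yields $\det M=\det(M/D)\det D$, because a block upper- or lower-triangular matrix has determinant equal to the product of the determinants of its diagonal blocks, and the two outer factors have diagonal blocks equal to identity matrices.

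The case where $A$ is invertible is entirely symmetric: one uses
\[
M=\begin{pmatrix} I & O \\ CA^{-1} & I \end{pmatrix}
\begin{pmatrix} A & O \\ O & M/A \end{pmatrix}
\begin{pmatrix} I & A^{-1}B \\ O & I \end{pmatrix},
\]
with $M/A=D-CA^{-1}C$ replaced by $M/A=D-CA^{-1}B$, and argues as before to get $\det M=\det A\,\det(M/A)$. An alternative to writing the factorization is to perform block row operations directly: when $D$ is invertible, subtracting $BD^{-1}$ times the second block row from the first clears the $(1,2)$ block, turning $M$ into the block lower-triangular matrix $\begin{pmatrix} M/D & O \\ C & D \end{pmatrix}$ without changing the determinant; the determinant of the latter is $\det(M/D)\det D$.

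The two small points needing care are the verification that the block product reproduces $M$ (a one-line computation) and the fact that the determinant of a block-triangular matrix factors over the diagonal blocks, which follows from the Leibniz expansion restricted to permutations preserving the block partition, or from iterated Laplace expansion along the zero block. Since these are purely formal, I do not anticipate a genuine obstacle: the statement is classical and the proof is essentially the bookkeeping of one matrix identity. The only thing one must not forget is to record that both orientations (clearing the $(1,2)$ block using $D^{-1}$, or clearing the $(2,1)$ block using $A^{-1}$) are needed to obtain the two displayed formulas, since the paper will apply whichever of the two Schur complements is more convenient in each computation.
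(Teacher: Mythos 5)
Your proof is correct: the block factorization
\[
\begin{pmatrix} A & B \\ C & D \end{pmatrix}
=\begin{pmatrix} I & BD^{-1} \\ O & I \end{pmatrix}
\begin{pmatrix} M/D & O \\ O & D \end{pmatrix}
\begin{pmatrix} I & O \\ D^{-1}C & I \end{pmatrix}
\]
(and its mirror image when $A$ is invertible) together with multiplicativity of the determinant and the block-triangular determinant rule is exactly the standard argument, and the paper itself gives no proof at all — it simply cites this classical lemma of Schur from Horn and Johnson, so there is nothing to compare beyond noting that your write-up would be the textbook proof. The only blemish is the stray expression $M/A=D-CA^{-1}C$ in your second paragraph, which you immediately correct to $M/A=D-CA^{-1}B$; just delete the erroneous version in a final write-up.
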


\begin{lem}
\textup{\label{lem:result of SHUR}}\textup{\emph{Let}}\emph{ }\textup{\emph{$M$
be a block matrix}}\emph{
\[
M=\left(\begin{array}{ccc}
A & \,\,\,\,\,\,\,\,\,\,\,\,\,\,B & \,\,\,\,\,\,\,\,\,\,\,\,\,\,\,\,\,\,J_{n_{1}\times n_{2}}\\
\\
B & \,\,\,\,\,\,\,\,\,\,\,\,\,\,C & \,\,\,\,\,\,\,\,\,\,\,\,\,\,\,\,O_{n_{1}\times n_{2}}\\
\\
J_{n_{2}\times n_{1}} & \,\,\,\,\,\,\,\,\,\,\,\,\,\,\,\,O_{n_{2}\times n_{1}} & \,\,\,\,\,\,\,\,\,\,\,\,\,\,\,D
\end{array}\right)
\]
}\textup{\emph{where $A$, $B$, and $C$ are square matrices of order
$n_{1}$ and $D$ is a square matrix of order $n_{2}.$ Then the Schur
complement of $xI_{n_{2}}-D$ in the }}charactristic matrix of $M$
is

\emph{
\[
\left(\begin{array}{cc}
xI_{n_{1}}-A-\Gamma_{D}(x)J_{n_{1}}\,\,\,\,\,\,\,\,\,\, & -B\\
\\
-B & xI_{n_{1}}-C
\end{array}\right).
\]
}
\end{lem}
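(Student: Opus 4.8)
The plan is to write down the characteristic matrix $xI_{2n_{1}+n_{2}}-M$ explicitly in the same $3\times3$ block layout as $M$, namely
\[
xI_{2n_{1}+n_{2}}-M=\left(\begin{array}{ccc}
xI_{n_{1}}-A & -B & -J_{n_{1}\times n_{2}}\\
-B & xI_{n_{1}}-C & O_{n_{1}\times n_{2}}\\
-J_{n_{2}\times n_{1}} & O_{n_{2}\times n_{1}} & xI_{n_{2}}-D
\end{array}\right),
\]
and then to regroup it as a $2\times2$ block matrix in which the lower-right block is $xI_{n_{2}}-D$, the upper-left block is $P:=\left(\begin{smallmatrix}xI_{n_{1}}-A & -B\\ -B & xI_{n_{1}}-C\end{smallmatrix}\right)$, the upper-right block is $R:=\left(\begin{smallmatrix}-J_{n_{1}\times n_{2}}\\ O_{n_{1}\times n_{2}}\end{smallmatrix}\right)$, and the lower-left block is $S:=\left(-J_{n_{2}\times n_{1}}\ \ O_{n_{2}\times n_{1}}\right)$. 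Since $xI_{n_{2}}-D$ is assumed invertible (this is exactly the hypothesis under which its Schur complement is defined), the Schur complement in question is $P-R\,(xI_{n_{2}}-D)^{-1}S$ by the definition recalled just before the lemma.

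Next I would carry out the block multiplication $R\,(xI_{n_{2}}-D)^{-1}S$. Because the only nonzero block of $R$ sits in the top strip and the only nonzero block of $S$ sits in the left strip, the product collapses to
\[
R\,(xI_{n_{2}}-D)^{-1}S=\left(\begin{array}{cc}
J_{n_{1}\times n_{2}}(xI_{n_{2}}-D)^{-1}J_{n_{2}\times n_{1}} & O_{n_{1}}\\
O_{n_{1}} & O_{n_{1}}
\end{array}\right).
\]
The one substantive identification is the top-left corner: using the preliminaries' convention $J_{s\times t}=\mathbf{1}_{s}\mathbf{1}_{t}^{T}$ and associativity, $J_{n_{1}\times n_{2}}(xI_{n_{2}}-D)^{-1}J_{n_{2}\times n_{1}}=\mathbf{1}_{n_{1}}\bigl(\mathbf{1}_{n_{2}}^{T}(xI_{n_{2}}-D)^{-1}\mathbf{1}_{n_{2}}\bigr)\mathbf{1}_{n_{1}}^{T}$, and the scalar in the middle is precisely $\Gamma_{D}(x)$ by Definition \ref{def:The-M-coronal-=000393(x)}, so this corner equals $\Gamma_{D}(x)\,\mathbf{1}_{n_{1}}\mathbf{1}_{n_{1}}^{T}=\Gamma_{D}(x)J_{n_{1}}$.

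Finally I would subtract: $P-R(xI_{n_{2}}-D)^{-1}S$ affects only the $(1,1)$ block of $P$, turning $xI_{n_{1}}-A$ into $xI_{n_{1}}-A-\Gamma_{D}(x)J_{n_{1}}$ and leaving the $-B$ off-diagonal blocks and the $xI_{n_{1}}-C$ block untouched, which is exactly the claimed expression. There is no real obstacle here — the argument is a direct computation — so the only things to be careful about are bookkeeping: keeping the block dimensions consistent when regrouping the $3\times3$ array into a $2\times2$ one, and making sure the rank-one factorization of the all-ones blocks is applied in the order that exposes the coronal scalar. (If one wants, Lemma \ref{lem:(Schur-Complements,-).2.1} can then be invoked immediately afterward to pass from this Schur complement to $\det(xI-M)$, but that is not needed for the statement itself.)
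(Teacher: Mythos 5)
Your proposal is correct and follows essentially the same route as the paper: write out $xI_{2n_{1}+n_{2}}-M$, apply the definition of the Schur complement of $xI_{n_{2}}-D$, factor the all-ones blocks as $J_{s\times t}=\mathbf{1}_{s}\mathbf{1}_{t}^{T}$, and recognize the scalar $\mathbf{1}_{n_{2}}^{T}(xI_{n_{2}}-D)^{-1}\mathbf{1}_{n_{2}}$ as $\Gamma_{D}(x)$, so that only the $(1,1)$ block changes to $xI_{n_{1}}-A-\Gamma_{D}(x)J_{n_{1}}$. The sign bookkeeping (the two $-J$ factors cancelling) is handled correctly, matching the paper's computation.
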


\begin{proof}
The charactristic matrix of $M$ is 
\[
xI_{2n_{1}+n_{2}}-M=\left(\begin{array}{ccc}
xI_{n_{1}}-A\,\,\,\,\,\,\,\, & -B\,\,\,\,\,\,\,\,\,\,\, & -J_{n_{1}\times n_{2}}\\
\\
-B & \,\,\,\,\,xI_{n_{1}}-C\,\,\,\,\,\,\,\,\,\, & O_{n_{1}\times n_{2}}\\
\\
-J_{n_{2}\times n_{1}} & O_{n_{2}\times n_{1}} & xI_{n_{2}}-D
\end{array}\right).
\]
 The Schur complement of $\left(xI_{n_{2}}-D\right)$ is
\begin{align*}
\left(xI_{2n_{1}+n_{2}}-M\right)\left/\left(xI_{n_{2}}-D\right)\right. & =\left(\begin{array}{cc}
xI_{n_{1}}-A\,\,\,\,\, & \,-B\\
\\
-B\,\,\,\, & xI_{n_{1}}-C
\end{array}\right)-\left(\begin{array}{c}
-J_{n_{1}\times n_{2}}\\
\\
O_{n_{1}\times n_{2}}
\end{array}\right)\left(xI_{n_{2}}-D\right)^{-1}\left(\begin{array}{cc}
-J_{n_{2}\times n_{1}} & O_{n_{2}\times n_{1}}\end{array}\right)\\
\\
 & =\left(\begin{array}{cc}
xI_{n_{1}}-A\,\,\,\,\, & -B\\
\\
-B\,\,\,\,\, & xI_{n_{1}}-C
\end{array}\right)-\left(\begin{array}{c}
\mathbf{1}_{n_{1}}\mathbf{1}_{n_{2}}^{T}\\
\\
O_{n_{1}\times n_{1}}
\end{array}\right)\left((xI_{n_{2}}-D)^{-1}\right)\left(\begin{array}{cc}
\text{\ensuremath{\mathbf{1}_{n_{2}}}\ensuremath{\ensuremath{\mathbf{1}_{n_{1}}^{T}}}}\,\, & O_{n_{1}\times n_{1}}\end{array}\right)\\
\\
 & =\left(\begin{array}{cc}
xI_{n_{1}}-A-\Gamma_{D}(x)J_{n_{1}}\,\,\,\,\,\,\,\,\,\, & -B\\
\\
-B & xI_{n_{1}}-C
\end{array}\right).
\end{align*}
 
\end{proof}
\begin{lem}
\textbf{\emph{\label{lem:det(a+aJ)}(\cite{cvetkovivc2010introduction})}}.
If A is an n$\times n$ real matrix and \textgreek{a} is an real number,
then \textcolor{black}{
\begin{equation}
det(A+\alpha J_{n})=det(A)+\alpha\mathbf{1}_{n}^{T}adj(A)\mathbf{1}_{n}.\label{eq:det(A+alpha}
\end{equation}
}
\end{lem}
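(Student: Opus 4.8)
The plan is to exploit the fact that $\alpha J_{n}=\alpha\mathbf{1}_{n}\mathbf{1}_{n}^{T}$ is a rank-one perturbation of $A$, and to use the multilinearity of the determinant in the columns, which sidesteps any invertibility assumption. Write $a_{1},\dots,a_{n}$ for the columns of $A$; then the $j$-th column of $A+\alpha J_{n}$ is $a_{j}+\alpha\mathbf{1}_{n}$. Expanding $\det(A+\alpha J_{n})$ by multilinearity over the $2^{n}$ ways of choosing, in each column, either $a_{j}$ or $\alpha\mathbf{1}_{n}$, one obtains a sum of determinants indexed by subsets $S\subseteq\{1,\dots,n\}$, where $S$ records the columns in which the vector $\alpha\mathbf{1}_{n}$ was selected.

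Next I would discard the degenerate terms. Any term with $|S|\geq2$ has two columns both equal to $\alpha\mathbf{1}_{n}$, hence vanishes. The term $S=\emptyset$ contributes $\det(A)$. Each singleton $S=\{k\}$ contributes $\alpha$ times the determinant of the matrix $A^{(k)}$ obtained from $A$ by replacing its $k$-th column with $\mathbf{1}_{n}$; expanding $\det A^{(k)}$ along its $k$-th column and noting that the $(i,k)$-cofactor of $A^{(k)}$ depends only on entries outside row $i$ and column $k$ (so it coincides with the $(i,k)$-cofactor $C_{ik}$ of $A$), this contribution equals $\alpha\sum_{i=1}^{n}C_{ik}$.

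Finally I would sum over $k$ and translate into the adjugate. The singleton contributions add up to $\alpha\sum_{k=1}^{n}\sum_{i=1}^{n}C_{ik}=\alpha\sum_{i,k}(adj\,A)_{ki}=\alpha\,\mathbf{1}_{n}^{T}adj(A)\,\mathbf{1}_{n}$, using the convention $(adj\,A)_{ki}=C_{ik}$; adding back the $S=\emptyset$ term gives $\det(A)+\alpha\,\mathbf{1}_{n}^{T}adj(A)\,\mathbf{1}_{n}$, as claimed. There is essentially no serious obstacle; the only point needing a little care is the bookkeeping in the cofactor-versus-adjugate (transpose) correspondence and keeping the multilinear expansion organized. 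As an alternative one could first assume $A$ invertible, apply the matrix determinant lemma $\det(A+\alpha\mathbf{1}_{n}\mathbf{1}_{n}^{T})=\det(A)\bigl(1+\alpha\,\mathbf{1}_{n}^{T}A^{-1}\mathbf{1}_{n}\bigr)$, substitute $A^{-1}=adj(A)/\det(A)$, and then drop the invertibility hypothesis since both sides are polynomials in the entries of $A$ and invertible matrices are dense; but the multilinearity argument is self-contained and requires no density or continuity step.
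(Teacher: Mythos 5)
Your argument is correct. Note that the paper itself gives no proof of this lemma: it is quoted from the reference \cite{cvetkovivc2010introduction} and used as a black box, so there is no internal proof to compare against. Your column-multilinearity expansion is the standard self-contained argument: terms with two or more columns replaced by $\alpha\mathbf{1}_{n}$ vanish, the empty term gives $\det(A)$, and each singleton term, expanded along the replaced column, yields the sum of the cofactors $C_{ik}$ over $i$, so that summing over $k$ gives $\alpha\sum_{i,k}C_{ik}=\alpha\,\mathbf{1}_{n}^{T}\,adj(A)\,\mathbf{1}_{n}$ (the transpose in $adj(A)_{ki}=C_{ik}$ is harmless because you sum over all pairs of indices). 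This route has the advantage of requiring no invertibility hypothesis and no limiting argument; your alternative via the matrix determinant lemma plus polynomial identity/density is equally valid but strictly more machinery. Either way the lemma is established in the generality the paper needs (arbitrary real $A$, including the singular characteristic-type matrices to which it is applied).
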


\section{The characteristic polynomials of the NNS join and the NS join }

In \textbf{\cite{lu2023spectra} }the authers compute the\textbf{
}adjacency, Laplacian and signless Laplacian characteristic polynomial
of $G_{1}\veebar G_{2}$ where $G_{1}$ and $G_{2}$ are regular.\\
Here we compute the characteristic polynomials of $G_{1}\underset{=}{\lor}G_{2}$
and $G_{1}\veebar G_{2}$ where $G_{1}$ and $G_{2}$ are arbitrary
graphs. The proofs for the two joins (NS and NNS) are quite similar
and use Lemma\textbf{ \ref{lem:(Schur-Complements,-).2.1}} (twice),
Lemma \textbf{\ref{lem:result of SHUR}} and Lemma \textbf{\ref{lem:det(a+aJ)}}.
The results are used to construct non regular $\left\{ A,L,Q\right\} $NICS
graphs. 

\subsection{Adjacency characteristic polynomial}
\begin{thm}
\textup{\label{thm:4.1 Adjacency characteristic polynomial}}\textup{\emph{Let
$G_{i}$ be a graph on $n_{i}$ vertices for $i=1,2$. Then}}
\end{thm}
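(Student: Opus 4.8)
The plan is to write down the adjacency matrix of $G_{1}\underset{=}{\vee}G_{2}$ in block form with respect to the three natural groups of vertices — the original $n_{1}$ vertices of $G_{1}$, the $n_{1}$ added ``split'' vertices $u'_{1},\dots,u'_{n_{1}}$, and the $n_{2}$ vertices of $G_{2}$ — and then to peel off the $G_{2}$-block by a Schur-complement computation. Concretely, ordering the vertices as $(u_i),(u'_i),(G_2)$, the adjacency matrix is
\[
A\bigl(G_{1}\underset{=}{\vee}G_{2}\bigr)=\left(\begin{array}{ccc}
A(G_{1}) & \overline{A(G_{1})}_{0} & J_{n_{1}\times n_{2}}\\
\overline{A(G_{1})}_{0} & O_{n_{1}} & O_{n_{1}\times n_{2}}\\
J_{n_{2}\times n_{1}} & O_{n_{2}\times n_{1}} & A(G_{2})
\end{array}\right),
\]
where $\overline{A(G_{1})}_{0}=J_{n_{1}}-I_{n_{1}}-A(G_{1})$ is the adjacency matrix of $\overline{G_{1}}$ (the ``non-neighbor'' incidence between $u'_i$ and $u_j$). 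This is exactly the shape covered by Lemma \ref{lem:result of SHUR} with $A=A(G_{1})$, $B=\overline{A(G_{1})}_{0}$, $C=O_{n_{1}}$, $D=A(G_{2})$, so Lemma \ref{lem:(Schur-Complements,-).2.1} gives
\[
f_{A(G_{1}\underset{=}{\vee}G_{2})}(x)=f_{A(G_{2})}(x)\cdot\det\!\left(\begin{array}{cc}
xI_{n_{1}}-A(G_{1})-\Gamma_{A(G_{2})}(x)J_{n_{1}} & -\overline{A(G_{1})}_{0}\\
-\overline{A(G_{1})}_{0} & xI_{n_{1}}
\end{array}\right).
\]

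Next I would apply Schur complement a second time, this time eliminating the bottom-right $xI_{n_{1}}$ block (which is invertible for $x\neq 0$, and the final identity extends by continuity/polynomial identity). This collapses the $2n_{1}\times 2n_{1}$ determinant to
\[
x^{n_{1}}\det\!\left(xI_{n_{1}}-A(G_{1})-\Gamma_{A(G_{2})}(x)J_{n_{1}}-\tfrac{1}{x}\bigl(\overline{A(G_{1})}_{0}\bigr)^{2}\right).
\]
Now I substitute $\overline{A(G_{1})}_{0}=J_{n_{1}}-I_{n_{1}}-A(G_{1})$ and expand $\bigl(\overline{A(G_{1})}_{0}\bigr)^{2}$. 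The cross terms involving $J_{n_{1}}$ simplify because $J_{n_{1}}A(G_{1})$ and $A(G_{1})J_{n_{1}}$ are governed by the degree sequence of $G_{1}$; in the regular case these become scalar multiples of $J_{n_{1}}$, but for general $G_{1}$ one keeps them symbolically. The point is that after expansion the matrix inside the determinant has the form $\bigl(\text{matrix in }A(G_{1})\text{ and }x\bigr)+\bigl(\text{scalar}(x)\bigr)J_{n_{1}}+\text{(degree-dependent }J\text{-cross terms)}$, at which stage Lemma \ref{lem:det(a+aJ)} (the $\det(A+\alpha J)=\det A+\alpha\mathbf{1}^{T}\mathrm{adj}(A)\mathbf{1}$ identity) converts the $\alpha J_{n_{1}}$ part into an adjugate term, and the coronal $\Gamma$ notation absorbs $\mathbf{1}^{T}(xI-\cdot)^{-1}\mathbf{1}$ expressions. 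Collecting everything should yield the stated closed form for $f_{A(G_{1}\underset{=}{\vee}G_{2})}(x)$ in terms of $f_{A(G_{1})}$, $f_{A(G_{2})}$, $\Gamma_{A(G_{2})}$, and auxiliary quantities built from $\overline{G_{1}}$.

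I expect the main obstacle to be bookkeeping rather than conceptual: the second Schur step introduces a product $\bigl(J-I-A(G_{1})\bigr)^{2}$ whose expansion has six terms, and keeping the $J_{n_{1}}$-cross terms in a form that is still amenable to Lemma \ref{lem:det(a+aJ)} is delicate unless one is willing either (a) to assume enough regularity that $J A(G_{1}) = r_{1}J$, or (b) to phrase the answer using the coronal of the ``deformed'' matrix. Since the theorem statement is cut off in the excerpt, I would match the final packaging to whatever auxiliary polynomials the authors have chosen to name; the safe route is to carry the computation fully symbolically, invoke Lemma \ref{lem:det(a+aJ)} exactly once at the end on a single rank-one $J_{n_{1}}$ perturbation, and express the residual $\mathbf{1}_{n_{1}}^{T}\mathrm{adj}(\cdot)\mathbf{1}_{n_{1}}$ via a coronal. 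A small separate check is the $x=0$ case, which the polynomial identity handles automatically once both sides are seen to be polynomials, so no special argument is needed there beyond a remark.
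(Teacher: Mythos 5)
Your proposal follows essentially the same route as the paper: the same three-block form of $A\bigl(G_{1}\underset{=}{\vee}G_{2}\bigr)$, a first Schur complement (via Lemma \ref{lem:result of SHUR} and Lemma \ref{lem:(Schur-Complements,-).2.1}) against $xI_{n_{2}}-A(G_{2})$, a second Schur complement against the $xI_{n_{1}}$ block yielding $x^{n_{1}}\det\bigl(xI_{n_{1}}-A(G_{1})-\Gamma_{A(G_{2})}(x)J_{n_{1}}-\tfrac{1}{x}A^{2}(\overline{G_{1}})\bigr)$, and one application of Lemma \ref{lem:det(a+aJ)} with the adjugate term repackaged as the coronal $\Gamma_{A(G_{1})+\frac{1}{x}A^{2}(\overline{G_{1}})}(x)$ — precisely your ``safe route.'' Your worry about expanding $\bigl(J_{n_{1}}-I_{n_{1}}-A(G_{1})\bigr)^{2}$ is moot, since the paper (like your final packaging) leaves $A^{2}(\overline{G_{1}})$ intact and only performs that expansion later, under regularity, in the spectrum computations.
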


\begin{lyxlist}{00.00.0000}
\item [{\emph{(a)}}] \emph{$f_{A(G_{1}\underset{=}{\vee}G_{2})}(x)=x^{n_{1}}f_{_{A\left(G_{2}\right)}}(x)det\left(xI_{n_{1}}-A(G_{1})-\frac{1}{x}A^{2}(\overline{G_{1}})\right)\left[1-\varGamma_{A(G_{2})}(x)\varGamma_{A(G_{1})+\frac{1}{x}A^{2}(\overline{G_{1}})}(x)\right].$}
\item [{\emph{(b)}}] \emph{$f_{A(G_{1}\veebar G_{2})}(x)=x^{n_{1}}f_{_{A\left(G_{2}\right)}}(x)det\left(xI_{n_{1}}-A(G_{1})-\frac{1}{x}A^{2}(G_{1})\right)\left[1-\varGamma_{A(G_{2})}(x)\varGamma_{A(G_{1})+\frac{1}{x}A^{2}(G_{1})}(x)\right]$}.
\end{lyxlist}
\begin{proof}
We prove (a). The proof of (b) is similar.

With a suitable ordering of the vertices of $G_{1}\underset{=}{\lor}G_{2}$,
we get 
\begin{align*}
\\
A\left(G_{1}\underset{=}{\vee}G_{2}\right)= & \left(\begin{array}{ccc}
A\left(G_{1}\right) & \,\,\,\,A\left(\overline{G_{1}}\right) & \,\,\,\,\,J_{n_{1}\times n_{2}}\\
\\
A\left(\stackrel{\text{\_\_\_}}{G_{1}}\right) & \,\,\,\,\,\,O_{n_{1}\times n_{1}} & \,\,\,\,\,\,O_{n_{1}\times n_{2}}\\
\\
J_{n_{2}\times n_{1}} & \,\,\,\,\,\,O_{n_{2}\times n_{1}} & \,\,\,\,A\left(G_{2}\right)
\end{array}\right).
\end{align*}
Thus 
\begin{align*}
f_{A(G_{1}\underset{=}{\vee}G_{2})}(x)= & det\left(xI_{2n_{1}+n_{2}}-A\left(G_{1}\underset{=}{\vee}G_{2}\right)\right)\\
\\
= & det\left(\begin{array}{cc|c}
xI_{n_{1}}-A(G_{1})\,\,\,\,\,\,\,\, & -A(\overline{G_{1}}) & -J_{n_{1}\times n_{2}}\\
 & \\
-A(\overline{G_{1}})\,\,\,\,\,\,\, & xI_{n_{1}} & O_{n_{1}\times n_{2}}\\
 & \\
\hline -J_{n_{2}\times n_{1}} & O_{n_{2}\times n_{1}} & xI_{n_{2}}-A(G_{2})
\end{array}\right)\\
\end{align*}
\[
\,\,\,\,\,\,\,\,\,\,\,\,\,\,\,\,\,\,\,\,\,\,\,\,\,\,\,\,\,\,\,\,\,\,\,\,\,\,\,\,\,\,\,\,\,\,\,\,\,\,\,\,\,\,\,\,\,\,\,\,\,\,\,\,\,\,\,\,\,\,\,\,\,\,\,\,\,\,\,\,\,\,\,\,\,\,\,\,\,\,\,\,\,=det\left(xI_{n_{2}}-A(G_{2})\right)det\left(\left(xI_{2n_{1}+n_{2}}-A\left(G_{1}\underset{=}{\vee}G_{2}\right)\right)\left/\left(xI_{n_{2}}-A(G_{2})\right)\right.\right),
\]

by the Lemma of Schur (Lemma\textbf{ \ref{lem:(Schur-Complements,-).2.1}})\textbf{.}\\

By Lemma\textbf{ \ref{lem:result of SHUR},}

\[
\left(xI_{2n_{1}+n_{2}}-A\left(G_{1}\underset{=}{\vee}G_{2}\right)\right)\left/\left(xI_{n_{2}}-A(G_{2})\right)\right.=\begin{pmatrix}xI_{n_{1}}-A(G_{1})-\varGamma_{A(G_{2})}(x)\,J_{n_{1}\times n_{1}}\,\,\,\,\,\,\,\,\,\,\, & -A(\overline{G_{1}})\\
\\
\\
-A(\overline{G_{1}}) & xI_{n_{1}}
\end{pmatrix}.
\]
 Using again Lemma \textbf{\ref{lem:(Schur-Complements,-).2.1}},
we get

\[
det\left(\left(xI_{2n_{1}+n_{2}}-A\left(G_{1}\underset{=}{\vee}G_{2}\right)\right)\left/\left(xI_{n_{2}}-A(G_{2})\right)\right.\right)=det\left(xI_{n_{1}}\right)det\left(xI_{n_{1}}-A(G_{1})-\varGamma_{A(G_{2})}(x)\,J_{n_{1}\times n_{1}}-\frac{1}{x}A^{2}(\overline{G_{1}})\right).
\]
\\
By Lemma \textbf{\ref{lem:det(a+aJ)}}, we get

\begin{align*}
det\left(\left(xI_{2n_{1}+n_{2}}-A\left(G_{1}\underset{=}{\vee}G_{2}\right)\right)\left/\left(xI_{n_{2}}-A(G_{2})\right)\right.\right)=\,\,\,\,\,\,\,\,\,\,\,\,\,\,\,\,\,\,\,\,\,\,\,\,\,\,\,\,\,\,\,\,\,\,\,\,\,\,\,\,\,\,\,\,\,\,\,\,\,\,\,\,\,\,\,\,\,\,\,\,\,\,\,\,\,\,\,\,\,\,\,\,\,\,\,\,\,\,\,\,\,\,\,\,\,\,\,\,\,\\
\\
=x^{n_{1}}\left(det(xI_{n_{1}}-A(G_{1})-\frac{1}{x}A^{2}(\overline{G_{1}}))-\varGamma_{A(G_{2})}(x)1_{n_{1}}^{T}adj\left(xI_{n_{1}}-A(G_{1})-\frac{1}{x}A^{2}(\overline{G_{1}})\right)1_{n_{1}}\right)\\
=x^{n_{1}}det\left(xI_{n_{1}}-A(G_{1})-\frac{1}{x}A^{2}(\overline{G_{1}})\right)\left[1-\varGamma_{A(G_{2})}(x)1_{n_{1}}^{T}(xI_{n_{1}}-A(G_{1})-\frac{1}{x}A^{2}(\overline{G_{1}}))^{-1}1_{n_{1}}\right]\\
\,\,\,=x^{n_{1}}det\left(xI_{n_{1}}-A(G_{1})-\frac{1}{x}A^{2}(\overline{G_{1}})\right)\left[1-\varGamma_{A(G_{2})}(x)\varGamma_{A(G_{1})+\frac{1}{x}A^{2}(\overline{G_{1}})}(x)\right].\,\,\,\,\,\,\,\,\,\,\,\,\,\,\,\,\,\,\,\,\,\,\,\,\,\,\,\,\,\,\,\,\,\,\,\,\,\,\,\,\,\,\,\,\,\,
\end{align*}
Thus
\[
f_{A(G_{1}\underset{=}{\vee}G_{2})}(x)=x^{n_{1}}f_{_{A\left(G_{2}\right)}}(x)det\left(xI_{n_{1}}-A(G_{1})-\frac{1}{x}A^{2}(\overline{G_{1}})\right)\left[1-\varGamma_{A(G_{2})}(x)\varGamma_{A(G_{1})+\frac{1}{x}A^{2}(\overline{G_{1}})}(x)\right].
\]
 
\end{proof}

\subsection{Laplacian characteristic polynomial}

In this section, we derive the Laplacian characteristic polynomials
of $G_{1}\veebar G_{2}$ and $G_{1}\underset{=}{\lor}G_{2}$ when
$G_{1}$and $G_{2}$ are arbitrary graphs. 
\begin{thm}
\textup{\label{thm: CHRAC. POLYNOMIAL Laplacian}}\textup{\emph{Let
$G_{i}$ be a graph on $n_{i}$ vertices for $i=1,2$. Then}}
\end{thm}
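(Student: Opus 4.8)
The plan is to mirror the adjacency computation of Theorem~\ref{thm:4.1 Adjacency characteristic polynomial} essentially verbatim, replacing the adjacency matrix everywhere by the Laplacian and keeping careful track of the degree-diagonal contributions, which is where the two joins differ from each other and from the adjacency case. First I would write down $L(G_1\underset{=}{\vee}G_2)$ in the block form dictated by the ordering (original vertices of $G_1$, the split copies $u_1',\dots,u_n'$, then the vertices of $G_2$). The key bookkeeping point is the degree diagonal: in $G_1\underset{=}{\vee}G_2$ a vertex $u_i$ has degree $d_{G_1}(u_i)+ \bigl(n_1-1-d_{G_1}(u_i)\bigr) + n_2 = n_1-1+n_2$, a split vertex $u_i'$ has degree $n_1-1-d_{G_1}(u_i)$, and a vertex of $G_2$ has degree $d_{G_2}(v_j)+n_1$; so the top-left block is $L(G_1)+(n_1-1+n_2)I_{n_1}$, the middle block is $D(\overline{G_1})$ on the diagonal with the off-diagonal $-A(\overline{G_1})$ coming from the split edges (i.e.\ the block is $xI_{n_1}$ minus the zero adjacency among split vertices but with the degree term $D(\overline{G_1})$ retained), and the bottom-right block is $L(G_2)+n_1 I_{n_2}$.

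Next I would apply the Schur-complement lemma (Lemma~\ref{lem:(Schur-Complements,-).2.1}) with respect to the $G_2$-block, which introduces the coronal $\Gamma_{L(G_2)+n_1 I_{n_2}}(x)=\Gamma_{L(G_2)}(x-n_1)$ via Lemma~\ref{lem:result of SHUR} (the $J$ off-diagonal blocks are exactly in the position that lemma handles). This reduces the determinant to that of a $2n_1\times 2n_1$ block matrix; I then Schur-complement again with respect to the split-vertex block $xI_{n_1}-D(\overline{G_1})$ (now invertible rather than $xI_{n_1}$, which is the only genuine new wrinkle compared to the adjacency proof), producing a term $A(\overline{G_1})\bigl(xI_{n_1}-D(\overline{G_1})\bigr)^{-1}A(\overline{G_1})$. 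Finally Lemma~\ref{lem:det(a+aJ)} peels off the rank-one $\Gamma$-term, turning $\det$ of (matrix $+\alpha J$) into $\det(\text{matrix})\cdot[1-\Gamma_{L(G_2)}(x-n_1)\cdot\Gamma_{M}(x)]$ where $M$ is the resulting $n_1\times n_1$ matrix, and reassembling with $\det(xI_{n_2}-L(G_2)-n_1I_{n_2})=f_{L(G_2)}(x-n_1)$ out front gives the stated formula. For $G_1\veebar G_2$ the computation is identical with $\overline{G_1}$ replaced by $G_1$ in the split block and the degree of $u_i'$ being $d_{G_1}(u_i)$ instead, so the diagonal becomes $D(G_1)$ there; I expect the paper's statement to read with $L(G_1)$, $A(\overline{G_1})$ or $A(G_1)$, $D(\overline{G_1})$ or $D(G_1)$, and the coronal arguments shifted by $n_1$ (and by $n_1-1+n_2$ in the top-left block) accordingly.

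The main obstacle is purely organizational rather than conceptual: unlike the adjacency case, the inner Schur complement is taken against $xI_{n_1}-D(\overline{G_1})$ (resp.\ $xI_{n_1}-D(G_1)$), which is diagonal but not scalar, so the resulting correction term is $A(\overline{G_1})\bigl(xI_{n_1}-D(\overline{G_1})\bigr)^{-1}A(\overline{G_1})$ rather than $\tfrac1x A^2(\overline{G_1})$, and one must not absorb a clean $x^{n_1}$ factor out front — instead the factor is $\det(xI_{n_1}-D(\overline{G_1}))=\prod_i (x-n_1+1+d_{G_1}(u_i))$. I would therefore keep that determinant explicit and be careful that every coronal symbol $\Gamma_{(\cdot)}$ has its matrix argument and its shifted variable written correctly. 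Once the block structure and the two degree-diagonals are pinned down, the three remaining steps (Schur, Schur, Lemma~\ref{lem:det(a+aJ)}) are the same routine manipulation already carried out in the proof of Theorem~\ref{thm:4.1 Adjacency characteristic polynomial}, and the signless-Laplacian case will be the same again with all minus signs on $A$ turned to plus.
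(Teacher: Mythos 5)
Your route is exactly the paper's route: write $L(G_1\underset{=}{\vee}G_2)$ in block form under the ordering (original vertices, split vertices, vertices of $G_2$), take a Schur complement against the $G_2$-block so that Lemma \ref{lem:result of SHUR} produces the term $\Gamma_{L(G_2)}(x-n_1)J_{n_1}$, take a second Schur complement against the split-vertex block $xI_{n_1}-D(\overline{G_1})=(x-n_1+1)I_{n_1}+D(G_1)$, and finish with Lemma \ref{lem:det(a+aJ)} to peel off the rank-one piece. The explicit factor $\det\left((x-n_1+1)I_{n_1}+D(G_1)\right)$, the correction term $A(\overline{G_1})\left((x-n_1+1)I_{n_1}+D(G_1)\right)^{-1}A(\overline{G_1})$, the shifted coronal $\Gamma_{L(G_2)}(x-n_1)$, and the NS variant with $A(G_1)$, $D(G_1)$ in place of $A(\overline{G_1})$, $D(\overline{G_1})$ all coincide with Theorem \ref{thm: CHRAC. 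POLYNOMIAL Laplacian} and its proof.

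One bookkeeping slip would, however, corrupt the final formula for non-regular $G_1$: the top-left block of $L(G_1\underset{=}{\vee}G_2)$ is $(n_1+n_2-1)I_{n_1}-A(G_1)=(n_1+n_2-1)I_{n_1}+L(G_1)-D(G_1)$, not $L(G_1)+(n_1+n_2-1)I_{n_1}$ as you wrote; your own degree count $d(u_i)=n_1+n_2-1$ already forces a constant diagonal, whereas your expression has diagonal entries $d_{G_1}(u_i)+n_1+n_2-1$. Carried through, your version would drop the $+D(G_1)$ appearing in the paper's middle determinant $(x-n_1-n_2+1)I_{n_1}-L(G_1)+D(G_1)-A(\overline{G_1})\left((x-n_1+1)I_{n_1}+D(G_1)\right)^{-1}A(\overline{G_1})$ and in the corresponding coronal subscript, and the two answers agree only when $G_1$ is regular --- precisely the case the theorem is meant to go beyond. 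The same care is needed in the NS case, where $d(u_i)=2d_{G_1}(u_i)+n_2$, so the top-left block is $n_2I_{n_1}+L(G_1)+D(G_1)$, matching the $-L(G_1)-D(G_1)$ in part (b). With that correction, the rest of your plan goes through verbatim.
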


\begin{lyxlist}{00.00.0000}
\item [{\emph{(a)}}] \emph{
\begin{align*}
f_{_{L\left(G_{1}\underset{=}{\vee}G_{2}\right)}}\left(x\right) & =det\left(\left(x-n_{1}\right)I_{n_{2}}-L\left(G_{2}\right)\right)det\left(\left(x-n_{1}+1\right)I_{n_{1}}+D\left(G_{1}\right)\right)\\
 & \cdot det\left(\left(x-n_{1}-n_{2}+1\right)I_{n_{1}}-L\left(G_{1}\right)+D\left(G_{1}\right)-A\left(\overline{G_{1}}\right)\left(\left(x-n_{1}+1\right)I_{n_{1}}+D\left(G_{1}\right)\right)^{-1}A\left(\overline{G_{1}}\right)\right)\\
 & \cdot\left[1-\varGamma_{_{L\left(G_{2}\right)}}\left(x-n_{1}\right)\varGamma_{_{L\left(G_{1}\right)-D\left(G_{1}\right)+A\left(\overline{G_{1}}\right)\left(\left(x-n_{1}+1\right)I_{n_{1}}+D\left(G_{1}\right)\right)^{-1}A\left(\overline{G_{1}}\right)}}\left(x-n_{1}-n_{2}+1\right)\right].
\end{align*}
}
\item [{\emph{(b)}}] \emph{
\begin{align*}
f_{_{L\left(G_{1}\veebar G_{2}\right)}} & =det\left(\left(x-n_{1}\right)I_{n_{2}}-L\left(G_{2}\right)\right)det\left(xI_{n_{1}}-D\left(G_{1}\right)\right)\\
 & \cdot det\left(\left(x-n_{2}\right)I_{n_{1}}-L\left(G_{1}\right)-D\left(G_{1}\right)-A\left(G_{1}\right)\left(xI_{n_{1}}-D\left(G_{1}\right)\right)^{-1}A\left(G_{1}\right)\right)\\
 & \cdot\left[1-\varGamma_{_{L\left(G_{2}\right)}}\left(x-n_{1}\right)\varGamma_{_{L\left(G_{1}\right)+D\left(G_{1}\right)+A\left(G_{1}\right)\left(xI_{n_{1}}-D\left(G_{1}\right)\right)^{-1}A\left(G_{1}\right)}}\left(x-n_{2}\right)\right].
\end{align*}
}
\end{lyxlist}
\begin{proof}
(a) With a suitable ordering of the vertices of $G_{1}\underset{=}{\lor}G_{2}$,
we get 

\begin{align*}
L\ensuremath{\left(G_{1}\underset{=}{\vee}G_{2}\right)}= & \ensuremath{\left(\begin{array}{ccc}
\left(n_{1}+n_{2}-1\right)I_{n_{1}}-A\left(G_{1}\right)\,\,\:\,\,\,\,\,\, & -A\left(\overline{G_{1}}\right)\,\,\,\,\,\,\,\,\,\,\,\,\,\,\, & -J_{n_{1}\times\,n_{2}}\\
\\
-A\left(\overline{G_{1}}\right) & \left(n_{1}-1\right)I_{n_{1}}-D\left(G_{1}\right)\,\,\,\,\,\,\,\, & O_{n_{1}\times\,n_{2}}\\
\\
-J_{n_{2}\times\,n_{1}} & O_{n_{2}\times\,n_{1}} & D\left(G_{2}\right)+n_{1}I_{n_{2}}-A\left(G_{2}\right)
\end{array}\right)}\\
\\
= & \left(\begin{array}{ccc}
\left(n_{1}+n_{2}-1\right)I_{n_{1}}+L\left(G_{1}\right)-D\left(G_{1}\right)\,\,\,\,\,\,\,\,\, & -A\left(\overline{G_{1}}\right)\,\,\,\,\,\,\,\,\, & -J_{n_{1}\times\,n_{2}}\\
\\
-A\left(\overline{G_{1}}\right) & \left(n_{1}-1\right)I_{n_{1}}-D\left(G_{1}\right)\,\,\,\,\,\, & O_{n_{1}\times\,n_{2}}\\
\\
-J_{n_{2}\times\,n_{1}} & O_{n_{2}\times\,n_{1}} & n_{1}I_{n_{2}}+L\left(G_{2}\right)
\end{array}\right).
\end{align*}
 The Laplacian characteristic polynomial is

\begin{align*}
f_{_{L\left(G_{1}\underset{=}{\vee}G_{2}\right)}}\left(x\right)= & det\left(xI_{2n_{1}+n_{2}}-L\left(G_{1}\underset{=}{\vee}G_{2}\right)\right)\\
\\
= & det\left(\begin{array}{cc|c}
\left(x-n_{1}-n_{2}+1\right)I_{n_{1}}-L\left(G_{1}\right)+D\left(G_{1}\right)\,\,\,\,\,\,\, & A\left(\overline{G_{1}}\right)\,\,\,\,\,\,\,\,\,\, & \,\,\,\,\,\,\,\,\,\,J_{n_{1}\times\,n_{2}}\\
 & \\
\,\,A\left(\overline{G_{1}}\right) & \left(x-n_{1}+1\right)I_{n_{1}}+D\left(G_{1}\right) & \,\,\,\,\,\,\,\,\,\,O_{n_{1}\times\,n_{2}}\\
\hline  & \\
\text{\emph{J}}_{n_{2}\times\,n_{1}} & O_{n_{2}\times\,n_{1}} & \left(x-n_{1}\right)I_{n_{2}}-L\left(G_{2}\right)
\end{array}\right)\\
\\
= & det\left(\left(x-n_{1}\right)I_{n_{2}}-L\left(G_{2}\right)\right)det\left(\left(xI_{2n_{1}+n_{2}}-L\left(G_{1}\underset{=}{\vee}G_{2}\right)\right)\left/\left((x-n_{1})I_{n_{2}}-L(G_{2})\right)\right.\right),
\end{align*}
 by the Lemma of Schur (Lemma\textbf{ \ref{lem:(Schur-Complements,-).2.1}})\textbf{.}
\\

By Lemma\textbf{ \ref{lem:result of SHUR}},

\begin{align*}
\left(xI_{2n_{1}+n_{2}}-L\left(G_{1}\underset{=}{\vee}G_{2}\right)\right)\left/\left((x-n_{1})I_{n_{2}}-L(G_{2})\right)=\,\,\,\,\,\,\,\,\,\,\,\,\,\,\,\,\,\,\,\,\,\,\,\,\,\,\,\,\,\,\,\,\,\,\,\,\,\,\,\,\,\,\,\,\,\,\,\,\,\,\,\,\,\,\,\,\,\,\,\,\,\,\,\,\,\,\,\,\,\,\,\,\,\,\,\,\,\,\,\,\,\,\,\,\,\,\,\,\,\,\,\,\,\,\,\,\,\,\,\,\,\,\,\,\,\,\,\,\,\,\,\,\,\,\right.\\
\\
=\left(\begin{array}{cc}
\left(x-n_{1}-n_{2}+1\right)I_{n_{1}}-L\left(G_{1}\right)+D\left(G_{1}\right)-\Gamma_{L\left(G_{2}\right)}\left(x-n_{1}\right)J_{n_{1}\times\,n_{1}} & A\left(\overline{G_{1}}\right)\\
\\
A\left(\overline{G_{1}}\right) & \left(x-n_{1}+1\right)I_{n_{1}}+D\left(G_{1}\right)
\end{array}\right).
\end{align*}
Using again Lemma \textbf{\ref{lem:(Schur-Complements,-).2.1}} ,
we get
\[
det\left(\left(xI_{2n_{1}+n_{2}}-L\left(G_{1}\underset{=}{\vee}G_{2}\right)\right)\left/\left((x-n_{1})I_{n_{2}}-L(G_{2})\right)\right.\right)=det\left(\left(x-n_{1}+1\right)I_{n_{1}}+D\left(G_{1}\right)\right)det\left(B-\Gamma_{L\left(G_{2}\right)}\left(x-n_{1}\right)J_{n_{1}\times\,n_{1}}\right)
\]
where 
\[
B=\left(x-n_{1}-n_{2}+1\right)I_{n_{1}}-L\left(G_{1}\right)+D\left(G_{1}\right)-A\left(\overline{G_{1}}\right)\left(\left(x-n_{1}+1\right)I_{n_{1}}+D\left(G_{1}\right)\right)^{-1}A\left(\overline{G_{1}}\right).
\]
 By Lemma \textbf{\ref{lem:det(a+aJ)}} we get
\begin{align*}
det\left(\left(xI_{2n_{1}+n_{2}}-L\left(G_{1}\underset{=}{\vee}G_{2}\right)\right)\left/\left((x-n_{1})I_{n_{2}}-L(G_{2})\right)\right.\right)=\,\,\,\,\,\,\,\,\,\,\,\,\,\,\,\,\,\,\,\,\,\,\,\,\,\,\,\,\,\,\,\,\,\,\,\,\,\,\,\,\,\,\,\,\,\,\,\,\,\,\,\,\,\,\,\,\,\,\,\,\,\,\,\,\,\,\,\,\,\,\,\,\,\,\,\,\,\,\,\,\,\,\,\,\,\,\,\,\,\,\,\,\,\,\,\,\,\,\,\, & \,\\
=det\left(\left(x-n_{1}+1\right)I_{n_{1}}+D\left(G_{1}\right)\right)\left(det\left(B\right)-\varGamma_{L\left(G_{2}\right)}\left(x-n_{1}\right)\mathbf{1}_{n_{1}}^{T}adj\left(B\right)\mathbf{1}_{n_{1}}\right)\,\,\,\,\,\,\,\,\,\,\,\,\,\,\,\,\,\,\,\,\,\,\,\,\,\,\,\,\,\,\,\,\,\,\,\,\,\,\,\,\,\,\,\,\,\,\,\,\,\,\,\,\,\,\,\,\\
=det\left(\left(x-n_{1}+1\right)I_{n_{1}}+D\left(G_{1}\right)\right)det\left(B\right)\left[1-\varGamma_{L\left(G_{2}\right)}\left(x-n_{1}\right)\mathbf{1}_{n_{1}}^{T}B^{-1}\mathbf{1}_{n_{1}}\right]\,\,\,\,\,\,\,\,\,\,\,\,\,\,\,\,\,\,\,\,\,\,\,\,\,\,\,\,\,\,\,\,\,\,\,\,\,\,\,\,\,\,\,\,\,\,\,\,\,\,\,\,\,\,\,\,\,\,\,\,\,\\
\,\,=det\left(\left(x-n_{1}+1\right)I_{n_{1}}+D\left(G_{1}\right)\right)\,\,\,\,\,\,\,\,\,\,\,\,\,\,\,\,\,\,\,\,\,\,\,\,\,\,\,\,\,\,\,\,\,\,\,\,\,\,\,\,\,\,\,\,\,\,\,\,\,\,\,\,\,\,\,\,\,\,\,\,\,\,\,\,\,\,\,\,\,\,\,\,\,\,\,\,\,\,\,\,\,\,\,\,\,\,\,\,\,\,\,\,\,\,\,\,\,\,\,\,\,\,\,\,\,\,\,\,\,\,\,\,\,\,\,\,\,\,\,\,\,\,\,\,\,\,\,\,\,\,\,\,\,\,\,\,\,\,\,\,\,\,\,\,\,\,\,\,\,\,\,\,\,\,\,\,\,\,\,\,\,\,\,\\
\cdot det\left(\left(x-n_{1}-n_{2}+1\right)I_{n_{1}}-L\left(G_{1}\right)+D\left(G_{1}\right)-A\left(\overline{G_{1}}\right)\left(\left(x-n_{1}+1\right)I_{n_{1}}+D\left(G_{1}\right)\right)^{-1}A\left(\overline{G_{1}}\right)\right)\,\,\,\,\,\, & \,\,\,\\
\cdot\left[1-\varGamma_{_{L\left(G_{2}\right)}}\left(x-n_{1}\right)\varGamma_{_{L\left(G_{1}\right)-D\left(G_{1}\right)+A\left(\overline{G_{1}}\right)\left(\left(x-n_{1}+1\right)I_{n_{1}}+D\left(G_{1}\right)\right)^{-1}A\left(\overline{G_{1}}\right)}}\left(x-n_{1}-n_{2}+1\right)\right].\,\,\,\,\,\,\,\,\,\,\,\,\,\,\,\,\,\,\,\,\,\,\,\,\,\,\,\,\,\,\,\,\,\,\,\,\,\,\,\,\,\,\,\,\,\\
\,
\end{align*}
 Thus

\begin{align*}
f_{_{L\left(G_{1}\underset{=}{\vee}G_{2}\right)}}\left(x\right) & =det\left(\left(x-n_{1}\right)I_{n_{2}}-L\left(G_{2}\right)\right)det\left(\left(x-n_{1}+1\right)I_{n_{1}}+D\left(G_{1}\right)\right)\\
 & \cdot det\left(\left(x-n_{1}-n_{2}+1\right)I_{n_{1}}-L\left(G_{1}\right)+D\left(G_{1}\right)-A\left(\overline{G_{1}}\right)\left(\left(x-n_{1}+1\right)I_{n_{1}}+D\left(G_{1}\right)\right)^{-1}A\left(\overline{G_{1}}\right)\right)\\
 & \cdot\left[1-\varGamma_{_{L\left(G_{2}\right)}}\left(x-n_{1}\right)\varGamma_{_{L\left(G_{1}\right)-D\left(G_{1}\right)+A\left(\overline{G_{1}}\right)\left(\left(x-n_{1}+1\right)I_{n_{1}}+D\left(G_{1}\right)\right)^{-1}A\left(\overline{G_{1}}\right)}}\left(x-n_{1}-n_{2}+1\right)\right].
\end{align*}
The proof of (b) is similar.
\end{proof}

\subsection{Signless Laplacian characteristic polynomial }
\begin{thm}
\textup{\label{thm: signless laplacian characteristic}}\textup{\emph{Let
$G_{i}$ be a graph on $n_{i}$ vertices for $i=1,2$. Then}}
\end{thm}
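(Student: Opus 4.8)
The plan is to run, essentially verbatim, the argument used for Theorems~\ref{thm:4.1 Adjacency characteristic polynomial} and~\ref{thm: CHRAC. POLYNOMIAL Laplacian}: two applications of Schur's lemma (Lemma~\ref{lem:(Schur-Complements,-).2.1}), together with Lemma~\ref{lem:result of SHUR} and Lemma~\ref{lem:det(a+aJ)}. The first step is the degree bookkeeping needed to put $Q\left(G_{1}\underset{=}{\vee}G_{2}\right)$ into $3\times 3$ block form, ordering the vertices as (original vertices of $G_{1}$; split vertices $u'_{1},\dots,u'_{n_{1}}$; vertices of $G_{2}$). In $G_{1}\underset{=}{\vee}G_{2}$ each original vertex $u_{i}$ has degree $d_{G_{1}}(u_{i})+d_{\overline{G_{1}}}(u_{i})+n_{2}=n_{1}+n_{2}-1$ (its $G_{1}$-neighbours, the $\overline{G_{1}}$-neighbours among the split vertices, and all of $G_{2}$), each split vertex $u'_{i}$ has degree $d_{\overline{G_{1}}}(u_{i})=n_{1}-1-d_{G_{1}}(u_{i})$, and each vertex of $G_{2}$ has its $G_{2}$-degree increased by $n_{1}$. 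Hence the diagonal blocks of $Q\left(G_{1}\underset{=}{\vee}G_{2}\right)$ are $(n_{1}+n_{2}-1)I_{n_{1}}+A(G_{1})$, $(n_{1}-1)I_{n_{1}}-D(G_{1})$ and $n_{1}I_{n_{2}}+Q(G_{2})$; the block between the two copies of $G_{1}$ is $A(\overline{G_{1}})$, the block between the original $G_{1}$-vertices and $G_{2}$ is $J_{n_{1}\times n_{2}}$, and the block between the split vertices and $G_{2}$ is $O$. This last fact is exactly what makes Lemma~\ref{lem:result of SHUR} applicable.

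Next I would apply Lemma~\ref{lem:(Schur-Complements,-).2.1} to $xI-Q\left(G_{1}\underset{=}{\vee}G_{2}\right)$ with respect to the bottom-right block $(x-n_{1})I_{n_{2}}-Q(G_{2})$, pulling out $\det\left((x-n_{1})I_{n_{2}}-Q(G_{2})\right)$; Lemma~\ref{lem:result of SHUR} then produces the $2n_{1}\times 2n_{1}$ Schur complement, in which the coronal appearing is $\varGamma_{Q(G_{2})}(x-n_{1})$ because $\varGamma_{n_{1}I_{n_{2}}+Q(G_{2})}(x)=\varGamma_{Q(G_{2})}(x-n_{1})$ by~\eqref{eq:2.1}. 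A second application of Lemma~\ref{lem:(Schur-Complements,-).2.1}, now to this $2\times 2$ block matrix with respect to its invertible block $(x-n_{1}+1)I_{n_{1}}+D(G_{1})$, pulls out $\det\left((x-n_{1}+1)I_{n_{1}}+D(G_{1})\right)$ and leaves $B-\varGamma_{Q(G_{2})}(x-n_{1})J_{n_{1}}$, where
\[
B=(x-n_{1}-n_{2}+1)I_{n_{1}}-Q(G_{1})+D(G_{1})-A(\overline{G_{1}})\left((x-n_{1}+1)I_{n_{1}}+D(G_{1})\right)^{-1}A(\overline{G_{1}}).
\]
Finally, Lemma~\ref{lem:det(a+aJ)} gives $\det\left(B-\varGamma_{Q(G_{2})}(x-n_{1})J_{n_{1}}\right)=\det(B)\left[1-\varGamma_{Q(G_{2})}(x-n_{1})\,\mathbf{1}_{n_{1}}^{T}B^{-1}\mathbf{1}_{n_{1}}\right]$, and identifying $\mathbf{1}_{n_{1}}^{T}B^{-1}\mathbf{1}_{n_{1}}$ with $\varGamma_{Q(G_{1})-D(G_{1})+A(\overline{G_{1}})((x-n_{1}+1)I_{n_{1}}+D(G_{1}))^{-1}A(\overline{G_{1}})}(x-n_{1}-n_{2}+1)$ and collecting the four factors yields part~(a). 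Part~(b), the NS join, is the identical computation with $\overline{G_{1}}$ replaced by $G_{1}$ throughout: there $u_{i}$ has degree $2d_{G_{1}}(u_{i})+n_{2}$ and $u'_{i}$ has degree $d_{G_{1}}(u_{i})$, so the diagonal blocks of $Q$ become $n_{2}I_{n_{1}}+D(G_{1})+Q(G_{1})$, $D(G_{1})$ and $n_{1}I_{n_{2}}+Q(G_{2})$, and the block inverted in the second Schur step is $xI_{n_{1}}-D(G_{1})$.

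I do not expect a genuine obstacle: once the block form of $Q$ is written down, the computation is forced and is structurally identical to the adjacency and Laplacian cases already handled. What needs care is the bookkeeping of the affine shifts of $x$ (the arguments $x-n_{1}$, $x-n_{1}+1$, $x-n_{1}-n_{2}+1$ for the NNS join and $x-n_{1}$, $x$, $x-n_{2}$ for the NS join) and the tracking of signs through the two Schur complements, since a slip there propagates into every determinant factor and both coronals. Two small points should be checked explicitly: that the block linking the split vertices to $G_{2}$ is indeed zero, so Lemma~\ref{lem:result of SHUR} genuinely applies; and that the block inverted in the second Schur step is invertible over the field of rational functions in $x$ (its determinant is a nonzero polynomial), so that the resulting identity of rational functions is in fact an identity of polynomials, valid for every $x$.
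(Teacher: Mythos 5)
Your proposal is correct and follows exactly the route the paper intends: the paper's proof of this theorem is simply "similar to the proof of Theorem \ref{thm: CHRAC. POLYNOMIAL Laplacian}", and your block form of $Q$, the two applications of Lemma \ref{lem:(Schur-Complements,-).2.1} together with Lemmas \ref{lem:result of SHUR} and \ref{lem:det(a+aJ)}, and the resulting shifts $x-n_{1}$, $x-n_{1}+1$, $x-n_{1}-n_{2}+1$ (resp.\ $x-n_{1}$, $x$, $x-n_{2}$ for the NS join) reproduce that argument faithfully, including the correct degree bookkeeping.
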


\begin{lyxlist}{00.00.0000}
\item [{\emph{(a)}}] \emph{
\begin{align*}
f_{_{Q\left(G_{1}\underset{=}{\vee}G_{2}\right)}}\left(x\right) & =det\left(\left(x-n_{1}\right)I_{n_{2}}-Q\left(G_{2}\right)\right)det\left(\left(x-n_{1}+1\right)I_{n_{1}}+D\left(G_{1}\right)\right)\\
 & \cdot det\left(\left(x-n_{1}-n_{2}+1\right)I_{n_{1}}-Q\left(G_{1}\right)+D\left(G_{1}\right)-A\left(\overline{G_{1}}\right)\left(\left(x-n_{1}+1\right)I_{n_{1}}+D\left(G_{1}\right)\right)^{-1}A\left(\overline{G_{1}}\right)\right)\\
 & \cdot\left[1-\varGamma_{_{Q\left(G_{2}\right)}}\left(x-n_{1}\right)\varGamma_{_{Q\left(G_{1}\right)-D\left(G_{1}\right)+A\left(\overline{G_{1}}\right)\left(\left(x-n_{1}+1\right)I_{n_{1}}+D\left(G_{1}\right)\right)^{-1}A\left(\overline{G_{1}}\right)}}\left(x-n_{1}-n_{2}+1\right)\right].
\end{align*}
 }
\item [{\emph{(b)}}] \emph{
\begin{align*}
f_{_{Q\left(G_{1}\veebar G_{2}\right)}} & =det\left(\left(x-n_{1}\right)I_{n_{2}}-Q\left(G_{2}\right)\right)det\left(xI_{n_{1}}-D\left(G_{1}\right)\right)\\
 & \cdot det\left(\left(x-n_{2}\right)I_{n_{1}}-Q\left(G_{1}\right)-D\left(G_{1}\right)-A\left(G_{1}\right)\left(xI_{n_{1}}-D\left(G_{1}\right)\right)^{-1}A\left(G_{1}\right)\right)\\
 & \cdot\left[1-\varGamma_{_{Q\left(G_{2}\right)}}\left(x-n_{1}\right)\varGamma_{_{Q\left(G_{1}\right)+D\left(G_{1}\right)+A\left(G_{1}\right)\left(xI_{n_{1}}-D\left(G_{1}\right)\right)^{-1}A\left(G_{1}\right)}}\left(x-n_{2}\right)\right].
\end{align*}
}
\end{lyxlist}
\begin{proof}
The proof is similar to the proof of Theorem \textbf{\ref{thm: CHRAC. POLYNOMIAL Laplacian}}.
\end{proof}
\begin{cor}
\textup{\label{cor:ALQ1}}\textup{\emph{Let $F$ and $H$ be r-regular
non isomorphic cospectral graphs. Then for every $G$,}}
\end{cor}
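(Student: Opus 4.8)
The corollary asserts that if $F$ and $H$ are $r$-regular, non-isomorphic and cospectral, then for every graph $G$ the joins built from $F$ and $H$ (in the first slot) are cospectral in each of the relevant matrices. The plan is to feed $G_1=F$ and $G_1=H$ into the characteristic-polynomial formulas of Theorems \ref{thm:4.1 Adjacency characteristic polynomial}, \ref{thm: CHRAC. POLYNOMIAL Laplacian} and \ref{thm: signless laplacian characteristic} (with $G_2=G$ fixed) and check that every ingredient of those formulas depends on $F$ only through data that $F$ and $H$ share.

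First I would record what ``cospectral and $r$-regular'' buys us. Since $F$ and $H$ are $r$-regular with the same $A$-spectrum, Remark \ref{rem:ALQl} gives that they also have the same $L$-, $Q$- and $\mathcal{L}$-spectra, so in particular the same characteristic polynomials $f_{A(F)}=f_{A(H)}$, etc.; moreover $\overline{F}$ and $\overline{H}$ are $(n_1-1-r)$-regular and, by the standard fact $A(\overline{F})=J_{n_1}-I_{n_1}-A(F)$ together with regularity, $\overline F$ and $\overline H$ are cospectral too, hence so are all polynomials in $A(\overline F)$ versus the same polynomials in $A(\overline H)$. Also $D(F)=D(H)=rI_{n_1}$, so every occurrence of $D(G_1)$ in the formulas is just a scalar matrix independent of which of $F,H$ we use.

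Next I would go through the three formulas termwise. In Theorem \ref{thm:4.1 Adjacency characteristic polynomial}(a), the factor $x^{n_1}f_{A(G_2)}(x)$ is common; the factor $\det\!\left(xI_{n_1}-A(G_1)-\tfrac1x A^2(\overline{G_1})\right)$ is $\det\!\left(xI_{n_1}-A(G_1)-\tfrac1x(J-I-A(G_1))^2\right)$, and since $A(G_1)$ commutes with $J$ for regular $G_1$ this is a symmetric function of the eigenvalues of $A(G_1)$ (with the all-ones eigenvector handled separately), hence equal for $F$ and $H$; finally $\Gamma_{A(G_1)+\frac1x A^2(\overline{G_1})}(x)=\mathbf{1}_{n_1}^T(\cdot)^{-1}\mathbf{1}_{n_1}$ depends only on the eigenvalue of that matrix on the all-ones vector, which by Lemma \ref{lem:()-If-M coronal} is determined by the common row sum, i.e.\ by $r$ and $n_1$ only. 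So part (a) of the corollary follows; part (b) is identical with $\overline{G_1}$ replaced by $G_1$. For the Laplacian and signless Laplacian formulas the same three observations apply verbatim: every block is a polynomial expression in $A(G_1)$, $A(\overline{G_1})$ and $D(G_1)=rI$, all $\Gamma$-terms in the $G_1$-slot are evaluated on the all-ones vector where by regularity the value is a fixed rational function of $x$, $r$, $n_1$, and the $\Gamma$-terms in the $G_2$-slot are literally common. Hence each of $f_{A}$, $f_{L}$, $f_{Q}$ agrees for the $F$-join and the $H$-join; for $\mathcal{L}$ one either invokes Remark \ref{rem:ALQl} applied to the (regular? — no, the join need not be regular) join, or, more safely, notes that the normalized-Laplacian spectrum of a graph is determined by its $A$-spectrum together with its degree sequence, and the joins from $F$ and from $H$ have the same degree sequence (again because $D(F)=D(H)=rI$) and the same $A$-spectrum by the computation just made.

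The genuine obstacle, and the only place care is needed, is the commutativity/all-ones bookkeeping: the determinant and coronal expressions in the formulas are not obviously symmetric functions of the spectrum of $A(G_1)$ because of the inverse $\left((x-n_1+1)I_{n_1}+D(G_1)\right)^{-1}$ sandwiched between two copies of $A(\overline{G_1})$, and the products of non-commuting matrices. For regular $G_1$ this is resolved because $D(G_1)=rI_{n_1}$ is scalar, so the sandwiched inverse is just the scalar $\left(x-n_1+1+r\right)^{-1}$, and then every block becomes an honest polynomial in the single matrix $A(G_1)$ (using $A(\overline{G_1})=J-I-A(G_1)$ and $A(G_1)J=JA(G_1)=rJ$); simultaneously diagonalizing against an orthonormal eigenbasis of $A(G_1)$ containing $\tfrac1{\sqrt{n_1}}\mathbf 1_{n_1}$ turns each determinant into a product over eigenvalues of $A(G_1)$ and each $\Gamma$-term into an evaluation at the eigenvalue $r$, both of which are shared by $F$ and $H$. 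I would write this reduction once (it is the same argument for all four matrices) and then state that the corollary follows; the remaining manipulations are the routine substitutions indicated above.
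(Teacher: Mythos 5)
Your proof targets the wrong corollary. In Corollary \ref{cor:ALQ1} the cospectral pair $F,H$ sits in the \emph{second} slot: the claim is about $G\veebar F$ versus $G\veebar H$ and $G\underset{=}{\lor}F$ versus $G\underset{=}{\lor}H$, with the first operand $G$ completely arbitrary (in particular not regular). You instead fix $G_{2}=G$ and vary $G_{1}\in\{F,H\}$, which is the companion statement, Corollary \ref{cor:Let-ALQ2}. For the statement actually at hand, your machinery (regularity of the first operand, $D(G_{1})=rI$, simultaneous diagonalization with the all-ones vector) is neither available nor needed: the paper's proof is the one-line observation that in Theorems \ref{thm:4.1 Adjacency characteristic polynomial}, \ref{thm: CHRAC. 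POLYNOMIAL Laplacian} and \ref{thm: signless laplacian characteristic} the second graph enters the formulas only through $f_{X(G_{2})}(x)$ and a coronal $\varGamma_{X(G_{2})}(\cdot)$, and these agree for $F$ and $H$ because regular cospectral graphs are $X$-cospectral for all $X\in\{A,L,Q\}$ (Remark \ref{rem:ALQl}) and because $A(F),A(H)$ (resp.\ $L$, $Q$) have constant row sums $r$ (resp.\ $0$, $2r$), so Lemma \ref{lem:()-If-M coronal} gives $\varGamma_{X(F)}=\varGamma_{X(H)}$. You also never address the ``NI'' part of NICS: the paper notes that $G\veebar F\not\cong G\veebar H$ because $F\not\cong H$.

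That said, your reduction is essentially the correct argument for Corollary \ref{cor:Let-ALQ2} (for which the paper only writes ``the proof is similar''), and there it is sound: with $D(G_{1})=rI$ every block is a polynomial in $A(G_{1})$ and $J$, determinants factor over the common spectrum, and the coronals in the $G_{1}$-slot depend only on $n_{1}$, $r$ and $x$ by constant row sums. Two caveats even in that reading: the corollary claims only $\{A,L,Q\}$NICS, and your attempt to add $\mathcal{L}$ rests on the assertion that the normalized Laplacian spectrum is determined by the $A$-spectrum together with the degree sequence, which is false in general -- indeed the paper explicitly leaves the $\{A,L,Q,\mathcal{L}\}$ strengthening of these two corollaries as a conjecture supported only by numerical evidence; and, again, non-isomorphism of the joins should be stated.
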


\begin{lyxlist}{00.00.0000}
\item [{\emph{a)}}] \emph{$G\veebar F$ and $G\veebar H$ are $\left\{ A,Q,L\right\} $NICS.}
\item [{\emph{b)}}] \emph{$G\underset{=}{\lor}F$ and $G\underset{=}{\lor}H$
are $\left\{ A,Q,L\right\} $NICS.}
\end{lyxlist}
\begin{proof}
(a)\emph{ $G\veebar F$ }and\emph{ $G\veebar H$ }are non isomorphic
since $F$ and $H$ are non isomorphic. By Theorems \textbf{\ref{thm:4.1 Adjacency characteristic polynomial}},
\textbf{\ref{thm: CHRAC. POLYNOMIAL Laplacian}} and \textbf{\ref{thm: signless laplacian characteristic}},
$\ensuremath{f_{A(G\veebar F)}\left(x\right)}=\ensuremath{f_{A(G\veebar H)}\left(x\right)}$,
$f_{_{L\left(G\veebar F\right)}}\left(x\right)=f_{_{L\left(G\veebar H\right)}}\left(x\right)$
and $\ensuremath{f_{_{Q\left(G\veebar F\right)}}\left(x\right)}=\ensuremath{f_{_{Q\left(G\veebar H\right)}}\left(x\right)}$
since the matrices $A(F)$ and $A(H)$ have the same coronal (Lemma
\textbf{\ref{lem:()-If-M coronal}}) and the same characteristic polynomial.
This completes the proof of (a).

The proof of (b) is similar.\\
\end{proof}
\begin{cor}
\textup{\label{cor:Let-ALQ2}}\textup{\emph{Let $F$and $H$ be r-regular
non isomorphic cospectral graphs. Then for every $G$,}}
\end{cor}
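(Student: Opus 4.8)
I read Corollary \ref{cor:Let-ALQ2} as the mirror of Corollary \ref{cor:ALQ1}: for every graph $G$, the graphs $F\veebar G$ and $H\veebar G$ are $\{A,L,Q\}$NICS, and likewise $F\underset{=}{\vee}G$ and $H\underset{=}{\vee}G$ are $\{A,L,Q\}$NICS, now with the cospectral pair $F,H$ in the first argument of the joins. The plan is to substitute $G_1\in\{F,H\}$ and $G_2=G$ into Theorems \ref{thm:4.1 Adjacency characteristic polynomial}, \ref{thm: CHRAC. POLYNOMIAL Laplacian} and \ref{thm: signless laplacian characteristic} and check that every factor they produce agrees for $G_1=F$ and $G_1=H$. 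Write $n$ for the common order of $F$ and $H$. The pieces depending only on $G_2=G$ — the polynomials $f_{A(G)},f_{L(G)},f_{Q(G)}$ and the coronals $\Gamma_{A(G)},\Gamma_{L(G)},\Gamma_{Q(G)}$ — are identical since $G$ is held fixed, so the whole question reduces to the $G_1$-dependent ingredients: the determinants $\det\bigl(xI_n-P(G_1)\bigr)$ and coronals $\Gamma_{P(G_1)}$ appearing there, where $P(G_1)$ ranges over the matrices assembled from $A(G_1),A(\overline{G_1}),D(G_1),L(G_1),Q(G_1)$.

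The crux is that $r$-regularity makes each of these depend on $G_1$ only through its $A$-spectrum and the numbers $n,r$. Substituting $D(G_1)=rI_n$, $L(G_1)=rI_n-A(G_1)$, $Q(G_1)=rI_n+A(G_1)$ and $A(\overline{G_1})=J_n-I_n-A(G_1)$ into each $P(G_1)$, and using $A(G_1)J_n=J_nA(G_1)$, one sees that $P(G_1)$ is diagonalized by any orthonormal eigenbasis of $A(G_1)$ that contains $\mathbf 1_n$, its eigenvalues being explicit rational functions of $x$ built only from $n$, $r$ and the $A$-eigenvalues $\lambda_1,\dots,\lambda_n$ of $G_1$; for instance, in Theorem \ref{thm:4.1 Adjacency characteristic polynomial}(b), $P(G_1)=A(G_1)+\tfrac1x A^2(G_1)$ and $\det\bigl(xI_n-P(G_1)\bigr)=\prod_i\bigl(x-\lambda_i-\tfrac{\lambda_i^2}{x}\bigr)$. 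Since $F$ and $H$ are cospectral, and since the complement of an $r$-regular graph on $n$ vertices is $(n-1-r)$-regular with $A$-spectrum determined by the original one, $A(F),A(H)$ have equal spectra and so do $A(\overline F),A(\overline H)$; hence every $\det\bigl(xI_n-P(G_1)\bigr)$ is unchanged when $F$ is replaced by $H$. For the coronals, $\mathbf 1_n$ is a common eigenvector of $A(F)$, $A(\overline F)$ and $J_n$ (with eigenvalues $r$, $n-1-r$, $n$), hence of every $P(G_1)$, so by Definition \ref{def:The-M-coronal-=000393(x)} — exactly as in Lemma \ref{lem:()-If-M coronal} — each $\Gamma_{P(G_1)}$ is one rational function of $x$ with parameters only $n$ and $r$, again independent of the choice of $F$ or $H$. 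Feeding all this back into the three theorems yields $f_{X(F\veebar G)}=f_{X(H\veebar G)}$ and $f_{X(F\underset{=}{\vee}G)}=f_{X(H\underset{=}{\vee}G)}$ for every $X\in\{A,L,Q\}$.

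For non-isomorphism: as $F\not\cong H$ are cospectral they have the same order $n$ and the same regularity degree $r$, with $1\le r\le n-1$ ($F$ cannot be edgeless, since then $F\cong H$). In $F\veebar G$ the split vertices form an independent set of $n$ vertices each of degree $r$, while a vertex coming from $V(G)$ has degree $d_G(v)+n\ge n>r$ and a vertex of $V(F)$ has degree $2r+|V(G)|>r$; hence the degree-$r$ vertices of $F\veebar G$ are exactly the split ones. Therefore any isomorphism $F\veebar G\cong H\veebar G$ matches split part to split part and restricts to an isomorphism of the induced subgraphs on the non-split vertices, i.e.\ $F\vee G\cong H\vee G$; taking complements turns this into $\overline F\sqcup\overline G\cong\overline H\sqcup\overline G$, and cancelling the connected components of $\overline G$ in the multiset of components forces $\overline F\cong\overline H$, hence $F\cong H$, a contradiction; the NNS join is handled identically. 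The step that will take the most care is checking that, after the substitutions for $D,L,Q$ and $A(\overline{G_1})$, each Schur-complement matrix sitting inside the $\det(\cdots)$ factors of Theorems \ref{thm: CHRAC. POLYNOMIAL Laplacian} and \ref{thm: signless laplacian characteristic} — two per formula, together with two coronals — really does diagonalize along $\mathbf 1_n$ with spectrum-determined eigenvalues; this is routine but bookkeeping-heavy, whereas the non-isomorphism argument is short once the degree-$r$ description of the split vertices is noticed.
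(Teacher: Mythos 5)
Your proposal is correct and takes essentially the same route as the paper: substitute $G_1\in\{F,H\}$, $G_2=G$ into Theorems \ref{thm:4.1 Adjacency characteristic polynomial}, \ref{thm: CHRAC. POLYNOMIAL Laplacian} and \ref{thm: signless laplacian characteristic} and use $r$-regularity (in the spirit of Lemma \ref{lem:()-If-M coronal}) to see that every $G_1$-dependent determinant and coronal is determined only by $n$, $r$ and the common $A$-spectrum, exactly the argument the paper means by ``similar to Corollary \ref{cor:ALQ1}''. Your explicit non-isomorphism argument (identifying the split vertices by their degree and then cancelling $\overline G$ after taking complements) fills in a step the paper merely asserts, but it is a detail, not a different approach.
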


\begin{lyxlist}{00.00.0000}
\item [{\emph{a)}}] \emph{$F\veebar G$ and $H\veebar G$ are $\left\{ A,Q,L\right\} $NICS.}
\item [{\emph{b)}}] \emph{$F\underset{=}{\lor}G$ and $H\underset{=}{\lor}G$
are $\left\{ A,Q,L\right\} $NICS.}
\end{lyxlist}
\begin{proof}
The proof is similar to the proof of Corollary \textbf{\ref{cor:ALQ1}}. 
\end{proof}
\begin{rem}
\emph{The following examples demonstrate the importance of the regularity
of the graphs $F$ and $H$.}
\end{rem}

\begin{example}
The graphs $F$ and $H$ in Figure \textbf{\ref{fig:Two non regular A-cospectral graphs-1}
}are non regular and\textbf{ $A$}-cospectral\textbf{\cite{cvetkovic1971graphs}}.
The joins $K_{2}\veebar F$ and $K_{2}\veebar H$ in Figure\textbf{
\ref{fig:K2NSF and K2NSH}} are not $A$-cospectral since the $A$-spectrum
of $K_{2}\veebar F$ is \{$-2.2332$, $-2$, $-1.618$, $0^{[3]}$,
$0.577$, $0.618$, $4.6562$\} and the $A$-spectrum of $K_{2}\veebar H$
is $\{-2.7039,-1.618,-1.2467,0^{[3]},0.2526,0.618,4.698\}$. The joins
$K_{2}\underset{=}{\vee}F$ and $K_{2}\underset{=}{\vee}H$ in Figure
\textbf{\ref{fig:K2NNSF and K2NNSH}} are not $A$-cospectral since
the $A$-spectrum of $K_{2}\underset{=}{\vee}F$ is \{$(-2)^{[2]}$,
$-1$, $0^{[4]}$, $0.4384$, $4.5616$\} and the $A$-spectrum of
$K_{2}\underset{=}{\vee}H$ is $\{-2.6056,(-1)^{[2[},0^{[5]},4.6056\}$.
The joins $F\veebar K_{2}$ and $H\veebar K_{2}$ in Figure \textbf{\ref{fig:FNSK2 and HNSK2}
}are not $A$-cospectral since the $A$-spectrum of $F\veebar K_{2}$
is \{$-3.2361$, $-2.5205$, $-1$, $-0.5812$, $0^{[5]}$, $1.0895$,
$1.2361$, $5.0122$\} and the $A$-spectrum of $H\veebar K_{2}$
is \{$-3.5337$, $-2.1915$, $-1$, $0^{[6]}$, $0.3034$, $1.3403$,
$5.0815\}$ and the joins $F\underset{=}{\lor}K_{2}$ and $H\underset{=}{\lor}K_{2}$
in Figure \textbf{\ref{fig:G1NNSK2 and G2NNSK2-1} }are not $A$-cospectral
since the $A$-spectrum of $F\underset{=}{\lor}K_{2}$ is \{$-3.1903$,
$-2.4142$, $-1.2946$, $(-1)^{[3]}$, $0.4046$, $0.4142$, $1^{[2]}$,
$1.8201$, $5.2602\}$ and the $A$-spectrum of $H\underset{=}{\lor}K_{2}$
is $\{-4.1337,(-1)^{[5]},0,0.8194,1^{[3]},5.3143\}$.
\end{example}

\begin{figure}[h]
\begin{centering}
\subfloat[$F$]{\includegraphics{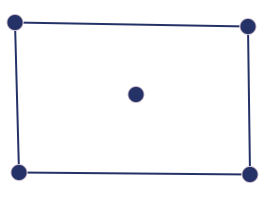}

\hspace*{0.1\textwidth}}\subfloat[$H$]{\includegraphics{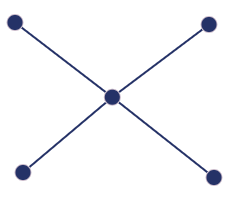}

}
\par\end{centering}
\caption{\label{fig:Two non regular A-cospectral graphs-1}Two non regular
$A$-cospectral graphs $F$ and $H$ .}
\end{figure}
\newpage{}

\begin{figure}[h]
\begin{centering}
\subfloat[$K_{2}\veebar F$]{\includegraphics[width=0.32\textwidth]{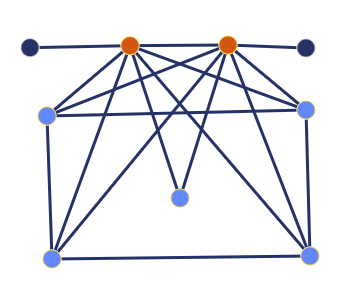}

\hspace*{0.1\textwidth}}\subfloat[$K_{2}\veebar H$]{\includegraphics[width=0.28\textwidth]{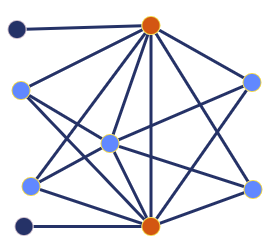}

}
\par\end{centering}
\caption{\label{fig:K2NSF and K2NSH}Two non $\textrm{A-cospectral}$ graphs
$\text{\ensuremath{K_{2}\veebar F}}$ and $K_{2}\veebar H.$}
\end{figure}

\begin{figure}[h]
\begin{centering}
\subfloat[$\text{\ensuremath{K_{2}\protect\underset{=}{\lor}}F}$]{\includegraphics[width=0.32\textwidth]{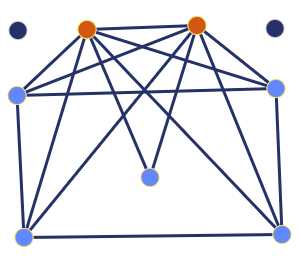}

\hspace*{0.1\textwidth}}\subfloat[$\text{\ensuremath{K_{2}\protect\underset{=}{\lor}}H}$]{\includegraphics[width=0.28\textwidth]{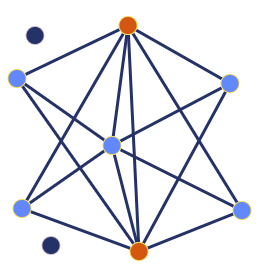}

}
\par\end{centering}
\caption{\label{fig:K2NNSF and K2NNSH}Two non $\textrm{A-cospectral}$ graphs
$\text{\ensuremath{K_{2}\protect\underset{=}{\lor}}F}$ and $\text{\ensuremath{K_{2}\protect\underset{=}{\lor}}H}$.}
\end{figure}

\begin{figure}[h]
\begin{centering}
\subfloat[$\text{F\ensuremath{\veebar K_{2}}}$]{\hspace*{0.06\textwidth}\includegraphics[width=0.32\textwidth]{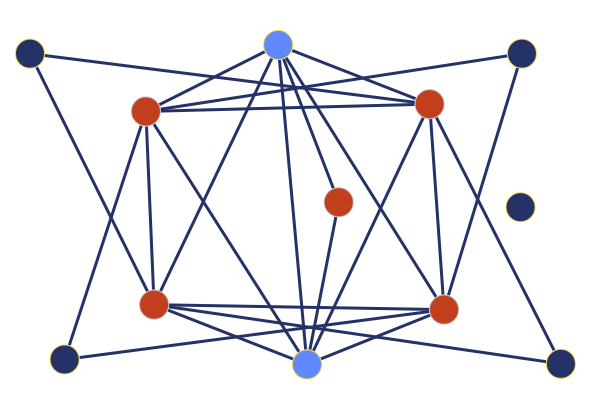}}\subfloat[$\text{H\ensuremath{\veebar K_{2}}}$]{\includegraphics[width=0.32\textwidth]{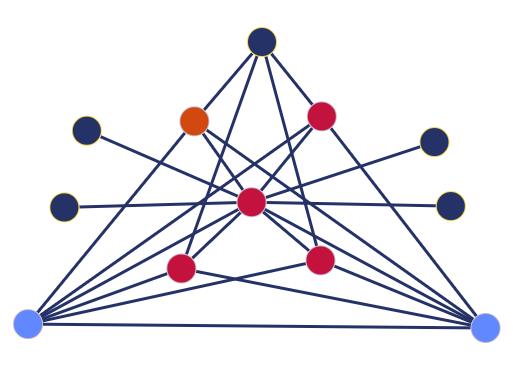}}
\par\end{centering}
\caption{\label{fig:FNSK2 and HNSK2}Two non $\textrm{A-cospectral}$ graphs
$\text{F\ensuremath{\veebar K_{2}}}$ and $\text{H\ensuremath{\veebar K_{2}}}.$}
\end{figure}

\begin{figure}[h]
\begin{centering}
\subfloat[$\text{F\ensuremath{\protect\underset{=}{\lor}K_{2}}}$]{\includegraphics[width=0.4\textwidth]{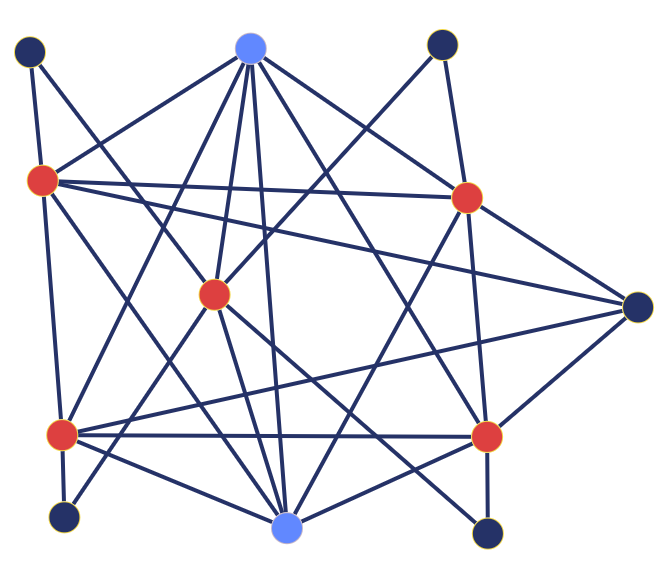}

}\subfloat[$\text{H\ensuremath{\protect\underset{=}{\lor}K_{2}}}$]{\includegraphics[width=0.4\textwidth]{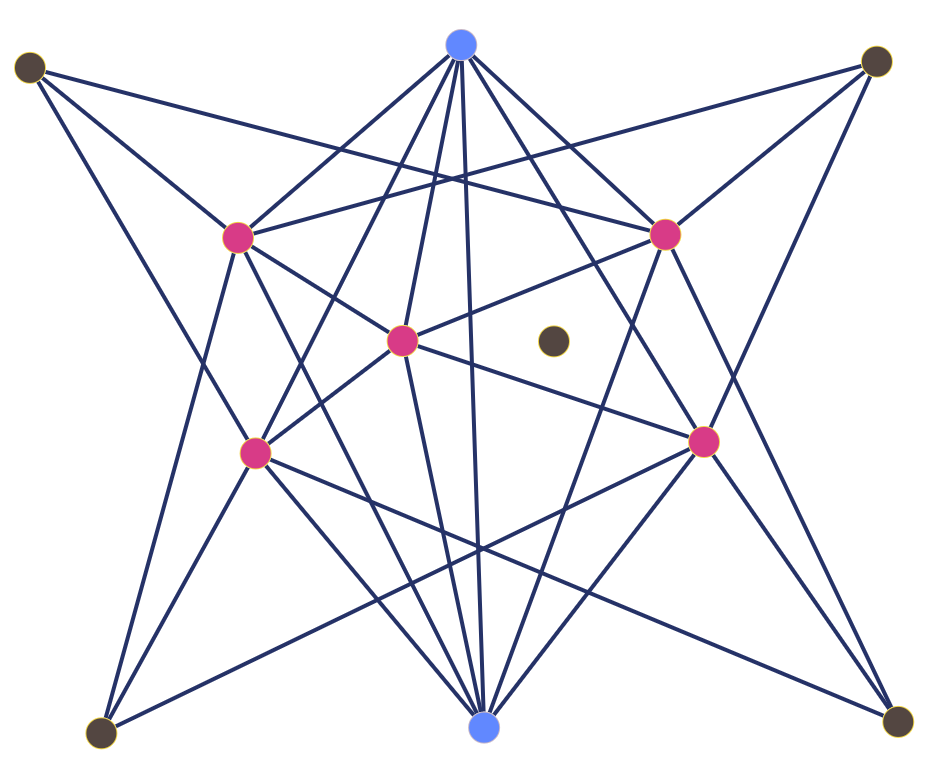}}
\par\end{centering}
\caption{\label{fig:G1NNSK2 and G2NNSK2-1}Two non $\textrm{A-cospectral}$
graphs $\text{F\ensuremath{\protect\underset{=}{\lor}K_{2}} }$and
$\text{H\ensuremath{\protect\underset{=}{\lor}K_{2}}}.$}
\end{figure}

\clearpage{}
\begin{rem}
\emph{Numerical computations suggest that in the corollaries }\textbf{\emph{\ref{cor:ALQ1}}}\emph{
and }\textbf{\emph{\ref{cor:Let-ALQ2}}}\emph{, $\left\{ A,Q,L\right\} $
can be replaced by $\left\{ A,Q,L,\mathcal{L}\right\} $. }
\end{rem}

\begin{conjecture}
\end{conjecture}

\begin{lyxlist}{00.00.0000}
\item [{\emph{a)}}] \emph{Let $H_{1}$ and $H_{2}$ be regular non isomorphic
cospectral graphs. Then for every $G$, $G\veebar H_{1}$ and $G\veebar H_{2}$
are $\left\{ A,Q,L,\mathcal{L}\right\} $NICS and $G\underset{=}{\lor}H_{1}$
and $G\underset{=}{\lor}H_{2}$ are $\left\{ A,Q,L,\mathcal{L}\right\} $NICS.}
\item [{\emph{b)}}] \emph{Let $G_{1}$ and $G_{2}$ be regular non isomorphic
cospectral graphs. Then for every $H$, $G_{1}\veebar H$ and $G_{2}\veebar H$
are $\left\{ A,Q,L,\mathcal{L}\right\} $NICS and $G_{1}\underset{=}{\lor}H$
and $G_{2}\underset{=}{\lor}H$ are $\left\{ A,Q,L,\mathcal{L}\right\} $NICS.}
\end{lyxlist}

\section{$\mathcal{L}-Spectra$ of NS Joins}

Let $G_{i},H_{i}$ be $r_{i}$-regular graphs, i = 1, 2. Lu, Ma and
Zhang showed that if $G_{1}$ and $H_{1}$ are cospectral and $G_{2}$
and $H_{2}$ are cospectral and non isomorphic, then $G_{1}\veebar G_{2}$
and $H_{1}\veebar H_{2}$ are $\left\{ A,L,Q\right\} $$\textrm{NICS}$.
In this section,we extend this result by showing that $G_{1}\veebar G_{2}$
and $H_{1}\veebar H_{2}$ are $\left\{ A,L,Q,\mathcal{L}\right\} $$\textrm{NICS}$.
To do it we determine the spectrum of the normalized Laplacian of
the graph $G_{1}\veebar G_{2}.$
\begin{thm}
\label{thm: about NS for NL}\textup{\emph{Let G$_{1}$ be a r$_{1}$-regular
graph with n$_{1}$ vertices and $G_{2}$ be a $r_{2}$-regular graph
with $n_{2}$vertices. Then the normalized Laplacian spectrum of $G_{1}\veebar G_{2}$
consists of:}}
\end{thm}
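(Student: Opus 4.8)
The plan is to exploit the regularity of $G_1$ and $G_2$ to pass from the normalized Laplacian to a matrix that is easy to analyze, and then decompose the spectrum into an "obvious" part coming from eigenvectors orthogonal to the all-ones vectors and a small "exceptional" part coming from a low-dimensional quotient. First I would record the degree structure of $H := G_1 \veebar G_2$: in $H$ each original vertex $u_i$ of $G_1$ has degree $r_1 + n_2$ (its $r_1$ neighbors in $G_1$, the $r_1$ split neighbors $u'_j$ with $u_i u_j \in E(G_1)$ wait — rather its neighbors among the $u'$ are those $u'_j$ with $u_i u_j\in E(G_1)$, again $r_1$ of them, plus all $n_2$ vertices of $G_2$), each split vertex $u'_i$ has degree $r_1$, and each vertex of $G_2$ has degree $r_2 + n_1$. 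Since $D(H)$ is constant on each of the three vertex classes, $\mathcal{L}(H) = I - D(H)^{-1/2} A(H) D(H)^{-1/2}$ is block-structured with scalar multiples of $A(G_1)$, $A(G_2)$ and identity blocks, so I can write $\mathcal{L}(H)$ explicitly as a $3\times 3$ block matrix in terms of $A(G_1)$, $A(G_2)$, $I_{n_1}$, $I_{n_2}$ and $J$.

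Next I would use the eigenbasis adapted to the regular structure. Let $A(G_1)$ have eigenvalues $\lambda_1 = r_1 \ge \lambda_2 \ge \cdots \ge \lambda_{n_1}$ with orthonormal eigenvectors, and similarly $A(G_2)$ with eigenvalues $\mu_1 = r_2 \ge \cdots \ge \mu_{n_2}$; note $\mathbf 1_{n_1}$ and $\mathbf 1_{n_2}$ are the Perron eigenvectors. For each eigenvector $z$ of $A(G_1)$ with $z \perp \mathbf 1_{n_1}$ and eigenvalue $\lambda_i$ ($i \ge 2$), vectors of the form $(az, bz, 0)^T$ lie in an $\mathcal{L}(H)$-invariant $2$-dimensional subspace, because the $J$-blocks annihilate $z$; restricting $\mathcal{L}(H)$ there gives a $2\times 2$ matrix whose entries are affine in $\lambda_i$, and its two eigenvalues — the roots of an explicit quadratic in the variable, with coefficients depending on $\lambda_i$, $r_1$, $n_2$ — account for $2(n_1-1)$ eigenvalues of $\mathcal{L}(H)$. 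Similarly, for each eigenvector $w$ of $A(G_2)$ with $w \perp \mathbf 1_{n_2}$ and eigenvalue $\mu_j$ ($j \ge 2$), the vector $(0,0,w)^T$ is an eigenvector of $\mathcal{L}(H)$ with eigenvalue $1 - \frac{\mu_j}{r_2 + n_1}$, giving $n_2 - 1$ more eigenvalues. This uses the same mechanism as Lemma~\ref{lem:result of SHUR}/Lemma~\ref{lem:()-If-M coronal}: off the all-ones directions the coronal contributions vanish.

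Finally, the remaining $3$ eigenvalues come from the $\mathcal{L}(H)$-invariant subspace spanned by $(\mathbf 1_{n_1},0,0)^T$, $(0,\mathbf 1_{n_1},0)^T$, $(0,0,\mathbf 1_{n_2})^T$; here I would write down the $3\times 3$ matrix obtained by restriction (its entries involve $r_1$, $r_2$, $n_1$, $n_2$ and the row sums $\mathbf 1^T J \mathbf 1$), observe that $0$ is always an eigenvalue with eigenvector $D(H)^{1/2}\mathbf 1$ (the Perron vector of the normalized Laplacian), factor it out, and present the other two as roots of the resulting quadratic. A dimension count $2(n_1-1) + (n_2-1) + 3 = 2n_1 + n_2$ confirms we have the whole spectrum. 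The main obstacle I anticipate is purely bookkeeping: getting the degree normalizations and the $2\times 2$ and $3\times 3$ reduced matrices exactly right, and simplifying the characteristic polynomials into a clean closed form; there is no conceptual difficulty once the invariant-subspace decomposition is set up, since regularity makes every relevant block simultaneously diagonalizable away from the $\mathbf 1$-directions.
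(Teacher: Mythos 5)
Your plan is essentially the same as the paper's proof: eigenvectors $(0,0,Z_i^T)^T$ from the $A(G_2)$-eigenvectors orthogonal to $\mathbf{1}_{n_2}$, a two-dimensional analysis on vectors $(X_i^T,\alpha X_i^T,0)^T$ built from each $A(G_1)$-eigenvector orthogonal to $\mathbf{1}_{n_1}$ (the paper solves a quadratic for $\alpha$, which is the same as diagonalizing your $2\times 2$ restriction), and finally the three-dimensional space of vectors constant on the three vertex classes, where the paper obtains a cubic with zero constant term while you factor out the eigenvalue $0$ first -- an equivalent presentation. One caution on the bookkeeping you flag: you state the degree of an original vertex $u_i$ as $r_1+n_2$, but your own enumeration ($r_1$ neighbors in $G_1$, $r_1$ split neighbors $u_j'$, and all $n_2$ vertices of $G_2$) gives $2r_1+n_2$, which is the normalization the paper uses and which you must carry through the $2\times 2$ and $3\times 3$ reductions.
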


\begin{itemize}
\item \emph{$1+\frac{r_{2}\left(\delta_{i}\left(G_{2}\right)-1\right)}{n_{1}+r_{2}}$
for $i=2,3,...,n_{2};$}
\item \emph{$1+\frac{\left(\delta_{i}\left(G_{1}\right)-1\right)\left(\sqrt{9r_{1}^{2}+4r_{1}n_{2}}+r_{1}\right)}{2\left(2r_{1}+n_{2}\right)}$
for $i=2,3,...,n_{1};$}
\item \emph{$1+\frac{\left(1-\delta_{i}\left(G_{1}\right)\right)\left(\sqrt{9r_{1}^{2}+4r_{1}n_{2}}-r_{1}\right)}{2\left(2r_{1}+n_{2}\right)}$
for $i=2,3,...,n_{1};$}
\item \emph{the three roots of the equation 
\[
\left(2r_{1}r_{2}+2r_{1}n_{1}+n_{2}r_{2}+n_{1}n_{2}\right)x^{3}-\left(3r_{1}r_{2}+5r_{1}n_{1}+2r_{2}n_{2}+3n_{1}n_{2}\right)x^{2}+\left(3r_{1}n_{1}+n_{2}r_{2}+2n_{1}n_{2}\right)x=0.
\]
}
\end{itemize}
\begin{proof}
Let $u_{1},u_{2},\ldots,u_{n_{1}}$be the vertices of $G_{1}$, $u'_{1},u'_{2},\ldots,u'_{n_{1}}be$
the vertices added by the splitting and $v_{1},v_{2},...,v_{n_{2}}$
be the vertices of $G_{2}$. Under this vertex partitioning the adjacency
matrix of $G_{1}\veebar G_{2}$ is, 
\[
A\left(G_{1}\veebar G_{2}\right)=\left(\begin{array}{ccc}
A\left(G_{1}\right)\,\,\,\, & \,\,\,A\left(G_{1}\right)\,\,\,\,\, & J_{n_{1}\times n_{2}}\\
A\left(G_{1}\right) & O_{n_{1}\times n_{1}} & O_{n_{1}\times n_{2}}\\
J_{n_{2}\times n_{1}} & O_{n_{2}\times n_{1}} & A\left(G_{2}\right)
\end{array}\right)
\]
 The corresponding degrees matrix of $G_{1}\veebar G_{2}$ is, 
\[
D\left(G_{1}\veebar G_{2}\right)=\left(\begin{array}{ccc}
\left(2r_{1}+n_{2}\right)I_{n_{1}}\,\,\,\,\,\,\, & O_{n_{1}\times n_{1}} & O_{n_{1}\times n_{2}}\\
O_{n_{1}\times n_{1}} & r_{1}I_{n_{1}} & O_{n_{1}\times n_{2}}\\
O_{n_{2}\times n_{1}} & O_{n_{2}\times n_{1}} & \,\,\,\,\,\,\,\,\,\left(r_{2}+n_{1}\right)I_{n_{2}}
\end{array}\right).
\]

By simple calculation we get 
\[
\mathcal{L}\left(G_{1}\veebar G_{2}\right)=\left(\begin{array}{ccc}
I_{n_{1}}-\frac{A\left(G_{1}\right)}{2r_{1}+n_{2}}\,\,\,\,\,\,\,\, & \frac{-A\left(G_{1}\right)}{\sqrt{r_{1}\left(2r_{1}+n_{2}\right)}}\,\,\,\,\,\,\,\,\, & \frac{-J_{n_{1}\times n_{2}}}{\sqrt{\left(2r_{1}+n_{2}\right)\left(r_{2}+n_{1}\right)}}\\
\\
\frac{-A\left(G_{1}\right)}{\sqrt{r_{1}\left(2r_{1}+n_{2}\right)}} & I_{n_{1}} & O_{n_{1}\times n_{2}}\\
\\
\frac{-J_{n_{2}\times n_{1}}}{\sqrt{\left(2r_{1}+n_{2}\right)\left(r_{2}+n_{1}\right)}} & O_{n_{2}\times n_{1}} & I_{n_{2}}-\frac{A\left(G_{2}\right)}{r_{2}+n_{1}}
\end{array}\right).
\]

We prove the theorem by constructing an orthogonal basis of eigenvectors
of $\mathcal{L}\left(G_{1}\veebar G_{2}\right)$. Since $G_{2}$ is
$r_{2}$-regular, the vector $\mathbf{1}_{n_{2}}$is an eigenvector
of $A(G_{2})$ that corresponds to $\lambda_{1}\left(G_{2}\right)=r_{2}$.
For $i=2,3,\ldots,n_{2}$ let $Z_{i}$ be an eigenvector of $A\left(G_{2}\right)$
that corresponds to $\lambda_{i}\left(G_{2}\right)$. Then $\mathbf{1}_{n_{2}}^{T}Z_{i}=0$
and $\left(0_{1\times n_{1}},0_{1\times n_{1}},Z_{i}^{T}\right)^{T}$
is an eigenvector of $\mathcal{L}\left(G_{1}\veebar G_{2}\right)$
corresponding to the eigenvalue $1-\frac{\lambda_{i}\left(G_{2}\right)}{r_{2}+n_{1}}$
.

By Remark \textbf{\ref{rem:ALQl}}, $1+\frac{r_{2}\left(\delta_{i}\left(G_{2}\right)-1\right)}{n_{1}+r_{2}}$
are eigenvalues of $\mathcal{L}\left(G_{1}\veebar G_{2}\right)$ for
$i=2,...,n_{2}.$ 

For $i=2,...,n_{1}$, let $X_{i}$ be an eigenvector of $A(G_{1})$
corresponding to the eigenvalue $\lambda_{i}\left(G_{1}\right)$.
We now look for a non zero real number $\alpha$ such that $\left(\begin{array}{ccc}
X_{i}^{T}\,\,\,\, & \alpha X_{i}^{T}\,\,\, & 0_{1\times n_{2}}\end{array}\right)^{T}$is an eigenvector of $\mathcal{L}(G_{1}\veebar G_{2})$. 

\[
\mathcal{L}\left(\begin{array}{c}
X_{i}\\
\alpha X_{i}\\
0_{n_{2}\times1}
\end{array}\right)=\left(\begin{array}{c}
X_{i}-\frac{\lambda_{i}\left(G_{1}\right)}{2r_{1}+n_{2}}X_{i}-\frac{\lambda_{i}\left(G_{1}\right)\alpha}{\sqrt{r_{1}\left(2r_{1}+n_{2}\right)}}X_{i}\\
-\frac{\lambda_{i}\left(G_{1}\right)}{\sqrt{r_{1}\left(2r_{1}+n_{2}\right)}}X_{i}+\alpha X_{i}\\
0_{n_{2}\times1}
\end{array}\right)=\left(\begin{array}{c}
1-\frac{\lambda_{i}\left(G_{1}\right)}{2r_{1}+n_{2}}-\frac{\lambda_{i}\left(G_{1}\right)\alpha}{\sqrt{r_{1}\left(2r_{1}+n_{2}\right)}}\\
-\frac{\lambda_{i}\left(G_{1}\right)}{\alpha\sqrt{r_{1}\left(2r_{1}+n_{2}\right)}}+1\\
0_{n_{2}\times1}
\end{array}\right)\left(\begin{array}{c}
X_{i}\\
\alpha X_{i}\\
0_{n_{2}\times1}
\end{array}\right),
\]
so, $\alpha$ must be a root of the equation 
\begin{equation}
1-\frac{\lambda_{i}\left(G_{1}\right)}{2r_{1}+n_{2}}-\frac{\lambda_{i}\left(G_{1}\right)\alpha}{\sqrt{r_{1}\left(2r_{1}+n_{2}\right)}}=-\frac{\lambda_{i}\left(G_{1}\right)}{\alpha\sqrt{r_{1}\left(2r_{1}+n_{2}\right)}}+1,\label{eq:equation of NS about eigenvalues}
\end{equation}
\[
\sqrt{2r_{1}+n_{2}}\alpha^{2}+\sqrt{r_{1}}\alpha-\sqrt{2r_{1}+n_{2}}=0.
\]

Thus, $\alpha=\frac{2\sqrt{2r_{1}+n_{2}}}{\sqrt{9r_{1}+4n_{2}}+\sqrt{r_{1}}}$
or $\alpha=\frac{-2\sqrt{2r_{1}+n_{2}}}{\sqrt{9r_{1}+4n_{2}}-\sqrt{r_{1}}}$.
Substituting the values of $\alpha$ in the right side of\\
\\
 (\textbf{\ref{eq:equation of NS about eigenvalues}}), we get by
Remark \textbf{\ref{rem:ALQl}} that \\

$1+\frac{\left(\delta_{i}\left(G_{1}\right)-1\right)\left(\sqrt{9r_{1}^{2}+4r_{1}n_{2}}+r_{1}\right)}{n_{1}+r_{2}}$
, $1+\frac{\left(1-\delta_{i}\left(G_{1}\right)\right)\left(\sqrt{9r_{1}^{2}+4r_{1}n_{2}}-r_{1}\right)}{2\left(2r_{1}+n_{2}\right)}$
are eigenvalues of $\mathcal{L}\left(G_{1}\veebar G_{2}\right)$ for
$i=2,3,...,n_{1}.$

So far, we obtained $n_{2}-1+2\left(n_{1}-1\right)=2n_{1}+n_{2}-3$
eigenvalues of $\mathcal{L}\left(G_{1}\veebar G_{2}\right)$. Their
eigenvectors are orthogonal to $\left(\mathbf{1}_{n_{1}}^{T},0_{1\times n_{1}},0_{1\times n_{2}}\right)^{T},$$\left(0_{1\times n_{1}},1_{n_{1}}^{T},0_{1\times n_{2}}\right)^{T}$
and $\left(0_{1\times n_{1}},0_{1\times n_{1}},1_{n_{2}}^{T}\right)^{T}.$

To find three additional eigenvalues, we look for eigenvectors of
$\mathcal{L}\left(G_{1}\veebar G_{2}\right)$ of the form $Y=\left(\alpha\mathbf{1}_{n_{1}}^{T},\beta\mathbf{1}_{n_{1}}^{T},\gamma\mathbf{1}_{n_{2}}^{T}\right)^{T}$
for $\left(\alpha,\beta,\gamma\right)\neq\left(0,0,0\right)$. Let
$x$ be an eigenvalue of $\mathcal{L}\left(G\veebar G_{2}\right)$
corresponding to the eigenvector $Y$ . From $\mathcal{L}Y=xY$ we
get

\[
\begin{cases}
\alpha-\frac{r_{1}}{2r_{1}+n_{2}}\alpha\,-\frac{r_{1}}{\sqrt{r_{1}\left(2r_{1}+n_{2}\right)}}\beta\,-\,\frac{n_{2}}{\sqrt{\left(2r_{1}+n_{2}\right)\left(r_{2}+n_{1}\right)}}\gamma=\alpha x\\
\frac{-r_{1}}{\sqrt{r_{1}\left(2r_{1}+n_{2}\right)}}\alpha+\beta=\beta x\\
\frac{-n_{1}}{\sqrt{\left(2r_{1}+n_{2}\right)\left(r_{2}+n_{1}\right)}}\alpha+\gamma-\frac{r_{2}}{r_{2}+n_{1}}\gamma=\gamma x.
\end{cases}
\]

Thus
\[
\alpha-\frac{r_{1}}{2r_{1}+n_{2}}\alpha+\frac{r_{1}^{2}\alpha}{r_{1}(2r_{1}+n_{2})(x-1)}+\frac{n_{1}n_{2}(r_{2}+n_{1})\alpha}{(2r_{1}+n_{2})(r_{2}+n_{1})\left((x-1)(r_{2}+n_{1})+r_{2}\right)}=\alpha x.
\]
 Notice that $\alpha\neq0$, since if $\alpha=0$ then $\alpha=\beta=\gamma=0$
and also $x\neq1$ since $x=1$ implies that $\alpha=0$. 

Dividing by $\alpha$, we get the following cubic equation 
\[
\left(2r_{1}r_{2}+2r_{1}n_{1}+n_{2}r_{2}+n_{1}n_{2}\right)x^{3}-\left(3r_{1}r_{2}+5r_{1}n_{1}+2r_{2}n_{2}+3n_{1}n_{2}\right)x^{2}+\left(3r_{1}n_{1}+n_{2}r_{2}+2n_{1}n_{2}\right)x=0
\]
 and this completes the proof.
\end{proof}
Now we can answer Question \textbf{\ref{que:Construct-non-regular}}
by constructing pairs of non regular $\left\{ A,L,Q,\mathcal{L}\right\} $NICS
graphs.
\begin{cor}
\textup{\label{cor:corollary of NS}}\textup{\emph{Let $G_{i},H_{i}$
be $r_{i}$-regular graphs, i = 1, 2. If $G_{1}$ and $H_{1}$ are
cospectral and $G_{2}$ and $H_{2}$ are cospectral and non isomorphic
then $G_{1}\veebar G_{2}$ and $H_{1}\veebar H_{2}$ are $\left\{ A,L,Q,\mathcal{L}\right\} $NICS. }}
\end{cor}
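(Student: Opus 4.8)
The plan is to verify that all four spectra of $G_{1}\veebar G_{2}$ and $H_{1}\veebar H_{2}$ coincide, and that the two graphs are non-isomorphic. Non-isomorphism is immediate: a NS join $G_{1}\veebar G_{2}$ determines $G_{2}$ (it is the unique induced subgraph on the vertices of degree larger than the others, more precisely one recovers $G_2$ as the set of vertices adjacent to the ``doubled'' part), so if $G_{1}\veebar G_{2}\cong H_{1}\veebar H_{2}$ then $G_{2}\cong H_{2}$, contradicting the hypothesis; alternatively one argues directly from the structure of the splitting vertices. The spectral equalities for $X\in\{A,L,Q\}$ are exactly Corollaries \ref{cor:ALQ1} and \ref{cor:Let-ALQ2}: applying the first with the regular cospectral pair $G_{2},H_{2}$ gives $G_{1}\veebar G_{2}$ and $G_{1}\veebar H_{2}$ are $\{A,L,Q\}$-cospectral, and applying the second with the regular cospectral pair $G_{1},H_{1}$ gives $G_{1}\veebar H_{2}$ and $H_{1}\veebar H_{2}$ are $\{A,L,Q\}$-cospectral; transitivity of cospectrality finishes these three matrices.

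The remaining and only substantive point is $\mathcal{L}$-cospectrality, and here I would invoke Theorem \ref{thm: about NS for NL} directly. That theorem expresses the entire normalized Laplacian spectrum of a NS join of regular graphs in terms of only the following data: $n_{1},n_{2},r_{1},r_{2}$, the normalized Laplacian eigenvalues $\delta_{i}(G_{1})$ for $i=2,\dots,n_{1}$, and the normalized Laplacian eigenvalues $\delta_{i}(G_{2})$ for $i=2,\dots,n_{2}$ (the three extra roots of the displayed cubic depend only on $n_{1},n_{2},r_{1},r_{2}$). Since $G_{1}$ and $H_{1}$ are cospectral regular graphs they have the same $n_{1}$, the same $r_{1}$ (the degree is read off from, say, the trace of the square of the adjacency matrix or simply from $n_1-1$ minus the top eigenvalue considerations — in any case cospectral regular graphs are $r$-regular for the same $r$) and, by Remark \ref{rem:ALQl}, the same normalized Laplacian spectrum; likewise for $G_{2}$ and $H_{2}$. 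Feeding these equal parameters into the four bullet points of Theorem \ref{thm: about NS for NL} yields identical multisets, so $G_{1}\veebar G_{2}$ and $H_{1}\veebar H_{2}$ are $\mathcal{L}$-cospectral.

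I would therefore write the proof as: first note $r_{1},n_{1}$ agree for $G_{1},H_{1}$ and $r_{2},n_{2}$ agree for $G_{2},H_{2}$, and that cospectral regular graphs share all four spectra (Remark \ref{rem:ALQl}); then cite Corollaries \ref{cor:ALQ1} and \ref{cor:Let-ALQ2} together with transitivity for $X\in\{A,L,Q\}$; then cite Theorem \ref{thm: about NS for NL} for $X=\mathcal{L}$, observing that every ingredient of the stated $\mathcal{L}$-spectrum is a spectral invariant shared by $G_{i}$ and $H_{i}$; and finally dispatch non-isomorphism via the structural recovery of $G_{2}$ (resp.\ $H_{2}$) from the join. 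The only place where care is genuinely needed is making the ``same $r_i$, same $n_i$, same $\delta$-spectrum'' observation airtight and checking that no hidden dependence on more refined invariants of $G_i$ slipped into Theorem \ref{thm: about NS for NL}; inspection of its four bullets shows it does not, so there is no real obstacle beyond bookkeeping.
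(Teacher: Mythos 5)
Your proposal is correct, and on the decisive point it coincides with the paper: the $\mathcal{L}$-cospectrality is obtained exactly as the paper does, by observing that every ingredient of Theorem \ref{thm: about NS for NL} depends only on $n_{1},n_{2},r_{1},r_{2}$ and the $\delta$-spectra of $G_{1},G_{2}$, all of which are shared by cospectral regular graphs (Remark \ref{rem:ALQl}), and the non-isomorphism is dispatched via $G_{2}\not\cong H_{2}$, which the paper simply asserts and you at least sketch (your degree-based recovery needs a touch of care, e.g.\ the degree classes $2r_{1}+n_{2}$ and $n_{1}+r_{2}$ can coincide, but you are no less rigorous than the paper here). Where you genuinely diverge is the $\{A,L,Q\}$ part: the paper cites the explicit $A$-, $L$- and $Q$-spectra of the NS join of regular graphs from Theorems 3.1--3.3 of Lu, Ma and Zhang, whereas you stay internal to the paper, chaining Corollary \ref{cor:ALQ1} (with the pair $G_{2},H_{2}$) and Corollary \ref{cor:Let-ALQ2} (with the pair $G_{1},H_{1}$) through the hybrid graph $G_{1}\veebar H_{2}$ and using transitivity of cospectrality. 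This buys self-containedness, at the price of one small hypothesis mismatch you should smooth over: Corollary \ref{cor:Let-ALQ2} as stated assumes $F$ and $H$ non-isomorphic, while $G_{1}$ and $H_{1}$ need only be cospectral (indeed in the paper's example $G_{1}=H_{1}=C_{4}$); either note that the cospectrality half of that corollary's proof never uses non-isomorphism (it only needs equal characteristic polynomials and equal coronals, Lemma \ref{lem:()-If-M coronal}), or treat the case $G_{1}\cong H_{1}$ separately, where $G_{1}\veebar H_{2}\cong H_{1}\veebar H_{2}$ trivially. With that remark added, your argument is complete and valid.
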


\begin{proof}
\emph{$G_{1}\veebar G_{2}$ }and\emph{ $H_{1}\veebar H_{2}$ }are
non isomorphic since $G_{2}$ and $H_{2}$ are non isomorphic. By
Theorem \textbf{\ref{thm: about NS for NL}} and Theorems \textbf{3.1},
\textbf{3.2} and \textbf{3.3} in \textbf{\cite{lu2023spectra}}, the\emph{
}graphs\emph{ $H_{1}\veebar H_{2}$ }are\emph{ $\left\{ A,L,Q,\mathcal{L}\right\} $}NICS\emph{.}
\end{proof}
\begin{example}
Let $G_{1}=H_{1}=C_{4}$, and choose $G_{2}=G$ and $H_{2}=H$ where
$G$ and $H$ are graphs in Figure \textbf{\ref{fig:Two-regular-non isomorphic cospectral  10 vertices}},
then the graphs in Figure \textbf{\ref{fig:Two-regular-non isomorphic cospectral  10 vertices}}
are $\left\{ A,L,Q,\mathcal{L}\right\} $NICS. 
\end{example}

\begin{figure}[h]
\begin{centering}
\subfloat[$C_{4}\veebar G_{2}$]{\includegraphics[width=0.5\textwidth]{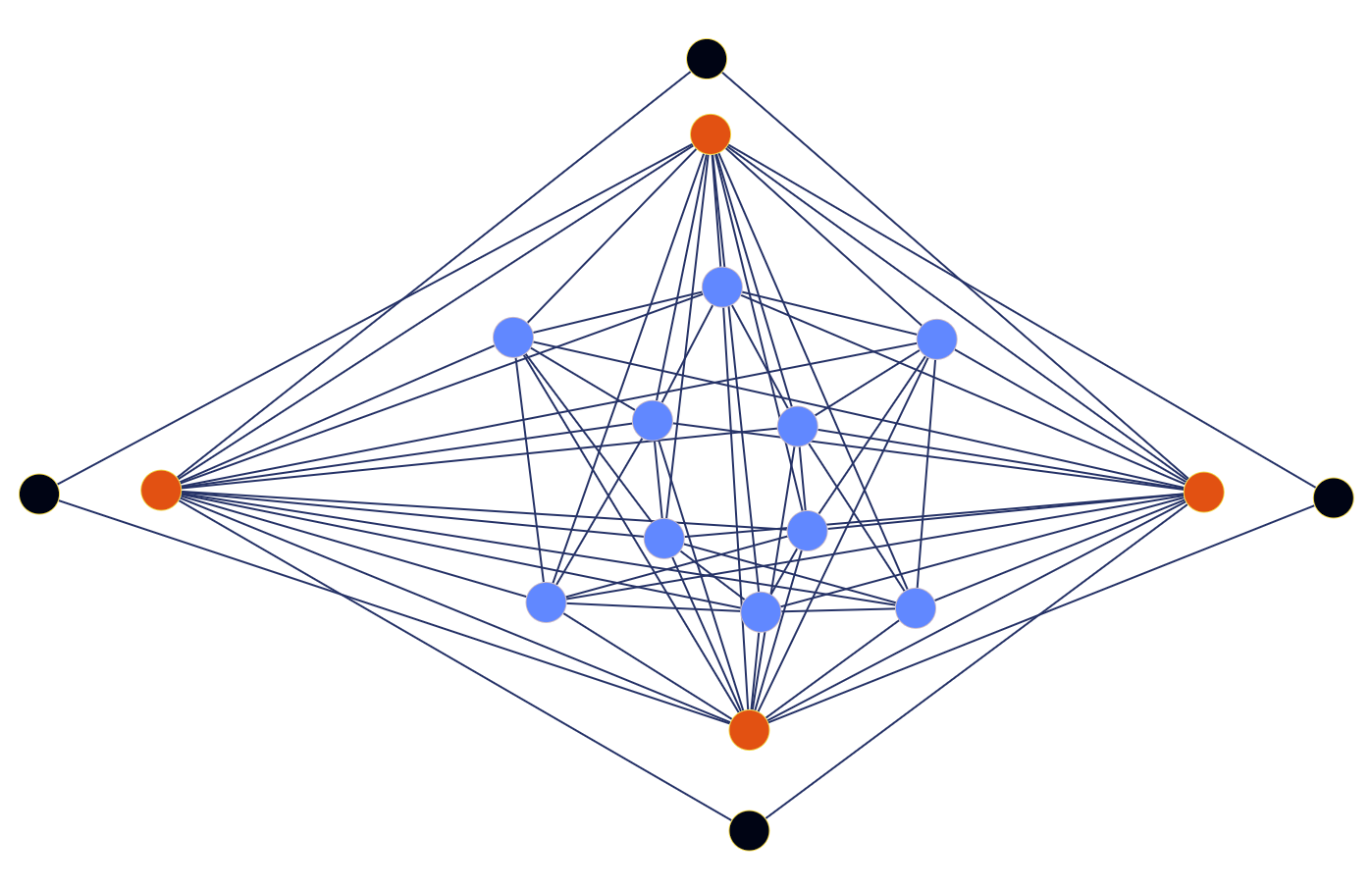}

\hspace{5bp}}\subfloat[$C_{4}\veebar H_{2}$ ]{\includegraphics[width=0.46\textwidth]{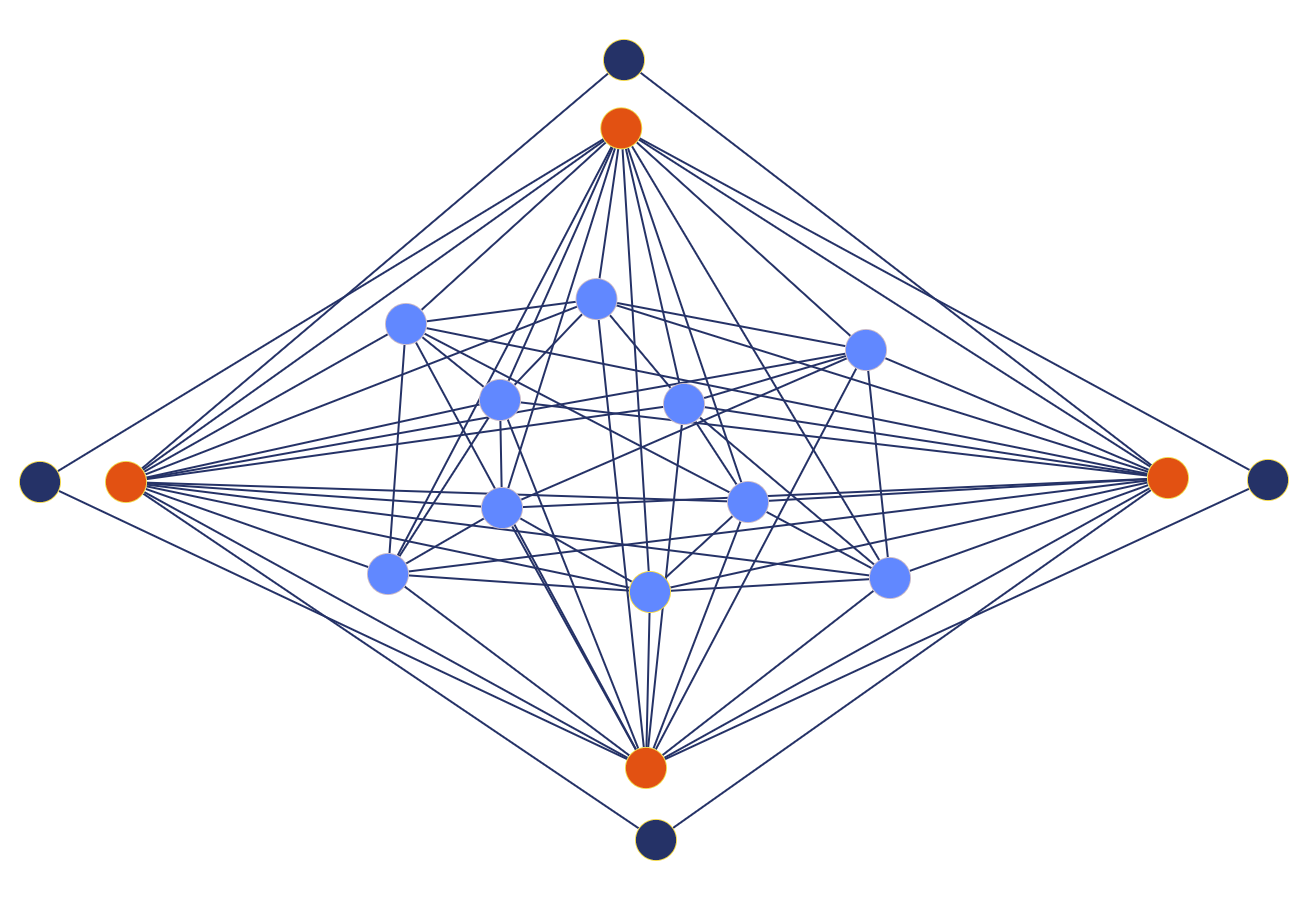}

}
\par\end{centering}
\caption{\label{fig:NS (GC4) and (HC4)}Non regular $\left\{ A,L,Q,\mathcal{L}\right\} $NICS
graphs. }
\end{figure}

\newpage{}

\section{Spectra of NNS Joins }

In this section we compute the $A$-spectrum, $L$-spectrum, $Q$-spectrum
and $\mathcal{L}$-spectrum of G$_{1}$$\underset{=}{\vee}$G$_{2}$
where $G_{1}$ and $G_{2}$ are regular.

We use it to answer Question \textbf{\ref{que:Construct-non-regular}}
by constructing pairs of non regular $\left\{ A,L,Q,\mathcal{L}\right\} $NICS
graphs.

\subsection{A-spectra of NNS join}

The adjacency matrix of G$_{1}$$\underset{=}{\vee}$ G$_{2}$ can
be written in a block form 

\begin{equation}
A\left(G_{1}\underset{=}{\vee}G_{2}\right)=\left(\begin{array}{ccc}
A\left(G_{1}\right) & \,\,\,\,A\left(\overline{G_{1}}\right) & \,\,\,\,\,J_{n_{1}\times n_{2}}\\
\\
A\left(\stackrel{\text{\_\_\_}}{G_{1}}\right) & \,\,\,\,\,\,O_{n_{1}\times n_{1}} & \,\,\,\,\,\,O_{n_{1}\times n_{2}}\\
\\
J_{n_{2}\times n_{1}} & \,\,\,\,\,\,O_{n_{2}\times n_{1}} & \,\,\,\,A\left(G_{2}\right)
\end{array}\right)\label{eq:2.1-1}
\end{equation}
 
\begin{thm}
\label{thm:A1}\textup{\emph{ Let G$_{1}$ be a r$_{1}$-regular graph
with n$_{1}$ vertices and $G_{2}$ be a $r_{2}$-regular graph with
$n_{2}$vertices. Then the adjacency spectrum of $G_{1}\underset{=}{\vee}G_{2}$$\,$consists
of:}}
\end{thm}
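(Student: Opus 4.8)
The plan is to mimic exactly the eigenvector-construction argument used for the NS join in Theorem \ref{thm: about NS for NL}, but now applied to the block matrix $A\left(G_{1}\underset{=}{\vee}G_{2}\right)$ displayed in \eqref{eq:2.1-1}. First I would record that, since $G_1$ is $r_1$-regular, $\overline{G_1}$ is $(n_1-1-r_1)$-regular and $A(G_1)$, $A(\overline{G_1})=J_{n_1}-I_{n_1}-A(G_1)$ share a common eigenbasis: $\mathbf{1}_{n_1}$ with eigenvalues $r_1$ and $n_1-1-r_1$, and for $i=2,\dots,n_1$ an eigenvector $X_i$ with $A(G_1)X_i=\lambda_i(G_1)X_i$, $A(\overline{G_1})X_i=-(1+\lambda_i(G_1))X_i$, all orthogonal to $\mathbf{1}_{n_1}$. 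Likewise $\mathbf{1}_{n_2}$ is an eigenvector of $A(G_2)$ for $r_2$, and $Z_i$ ($i=2,\dots,n_2$) are the remaining eigenvectors with $A(G_2)Z_i=\lambda_i(G_2)Z_i$, orthogonal to $\mathbf{1}_{n_2}$.

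Next I would peel off the eigenvalues coming from vectors orthogonal to the all-ones vectors. For $i=2,\dots,n_2$, the vector $(0,0,Z_i^T)^T$ is an eigenvector of $A\left(G_1\underset{=}{\vee}G_2\right)$ with eigenvalue $\lambda_i(G_2)$, because the $J$-blocks annihilate $Z_i$; this contributes $\lambda_i(G_2)$, $i=2,\dots,n_2$. For $i=2,\dots,n_1$, I would look for $\alpha\neq0$ making $\big(X_i^T,\ \alpha X_i^T,\ 0\big)^T$ an eigenvector; the $J$-blocks again vanish on $X_i$, and the eigenvalue condition reduces (just as in \eqref{eq:equation of NS about eigenvalues}) to a scalar system
\[
\lambda_i(G_1)-(1+\lambda_i(G_1))\alpha = x, \qquad -(1+\lambda_i(G_1)) = \alpha x,
\]
which on eliminating $x$ gives a quadratic in $\alpha$, hence two eigenvalues
\[
x=\tfrac{1}{2}\Big(\lambda_i(G_1)-1 \pm \sqrt{(\lambda_i(G_1)+1)^2+4(\lambda_i(G_1)+1)^2}\Big)
\]
-- more precisely $x_\pm = \tfrac{\lambda_i(G_1)-1\pm\sqrt{(\lambda_i(G_1)-1)^2+4(\lambda_i(G_1)+1)}}{2}$ after carefully expanding; I would simplify this to a clean closed form in $\lambda_i(G_1)$. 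This produces $2(n_1-1)$ more eigenvalues, for a running total of $2n_1+n_2-3$, whose eigenvectors are orthogonal to the three "all-ones'' test vectors $(\mathbf{1}_{n_1}^T,0,0)^T$, $(0,\mathbf{1}_{n_1}^T,0)^T$, $(0,0,\mathbf{1}_{n_2}^T)^T$.

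Finally, the remaining three eigenvalues come from eigenvectors of the form $Y=(\alpha\mathbf{1}_{n_1}^T,\beta\mathbf{1}_{n_1}^T,\gamma\mathbf{1}_{n_2}^T)^T$. Feeding $Y$ into $A\left(G_1\underset{=}{\vee}G_2\right)Y=xY$ and using $A(G_1)\mathbf{1}_{n_1}=r_1\mathbf{1}_{n_1}$, $A(\overline{G_1})\mathbf{1}_{n_1}=(n_1-1-r_1)\mathbf{1}_{n_1}$, $A(G_2)\mathbf{1}_{n_2}=r_2\mathbf{1}_{n_2}$, $J_{n_1\times n_2}\mathbf{1}_{n_2}=n_2\mathbf{1}_{n_1}$, $J_{n_2\times n_1}\mathbf{1}_{n_1}=n_1\mathbf{1}_{n_2}$, I get a $3\times3$ linear system in $(\alpha,\beta,\gamma)$ whose matrix is
\[
\begin{pmatrix} r_1 & n_1-1-r_1 & n_2\\ n_1-1-r_1 & 0 & 0\\ n_1 & 0 & r_2\end{pmatrix};
\]
its characteristic polynomial is a cubic in $x$, and its three roots are the last three eigenvalues. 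I would then state that the full $A$-spectrum of $G_1\underset{=}{\vee}G_2$ consists of $\lambda_i(G_2)$ for $i=2,\dots,n_2$, the two families $x_\pm(\lambda_i(G_1))$ for $i=2,\dots,n_1$, and the three roots of $x^3-(r_1+r_2)x^2+\big(r_1r_2-(n_1-1-r_1)^2-n_1n_2\big)x+r_2(n_1-1-r_1)^2=0$ (signs to be double-checked against the cofactor expansion), and I would note the count $ (n_2-1)+2(n_1-1)+3 = 2n_1+n_2$ matches the order of the matrix and that the constructed eigenvectors are mutually orthogonal, so they form a basis. The main obstacle is bookkeeping: getting the quadratic for $\alpha$ and the resulting closed-form eigenvalues right with the $A(\overline{G_1})X_i=-(1+\lambda_i(G_1))X_i$ substitution, and correctly expanding the $3\times3$ determinant; there is no conceptual difficulty once the common eigenbasis of $A(G_1)$ and $A(\overline{G_1})$ is exploited.
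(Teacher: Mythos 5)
Your route is genuinely different from the paper's. The paper proves Theorem \ref{thm:A1} by specializing the characteristic-polynomial formula of Theorem \ref{thm:4.1 Adjacency characteristic polynomial} to regular $G_1,G_2$, replacing both coronals via Lemma \ref{lem:()-If-M coronal} and then factoring the resulting expression into the three displayed factors. You instead construct an explicit eigenbasis of $A\left(G_{1}\underset{=}{\vee}G_{2}\right)$, which is the technique the paper reserves for the normalized Laplacian (Theorem \ref{thm: about NS for NL} and its NNS analogue). Your skeleton is sound: $(0,0,Z_i^T)^T$ does give $\lambda_i(G_2)$; your scalar system $\lambda_i(G_1)-(1+\lambda_i(G_1))\alpha=x$, $-(1+\lambda_i(G_1))=\alpha x$ is the correct one for the ansatz $(X_i^T,\alpha X_i^T,0)^T$; and your $3\times3$ matrix for vectors of the form $(\alpha\mathbf{1}_{n_1}^T,\beta\mathbf{1}_{n_1}^T,\gamma\mathbf{1}_{n_2}^T)^T$ is right, its characteristic polynomial being exactly the cubic in part (iii). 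What your approach buys is an explicit eigenvector basis (and no need for the coronal/Schur machinery); what the paper's approach buys is a uniform computation with no case analysis on the spectrum of $G_1$.

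Two things need repair. First, your displayed closed forms for $x_{\pm}$ are wrong: eliminating $\alpha$ from your (correct) scalar system gives $x^{2}-\lambda_i(G_1)\,x-\left(1+\lambda_i(G_1)\right)^{2}=0$, which is precisely the quadratic of part (ii), with roots $\tfrac{1}{2}\left(\lambda_i(G_1)\pm\sqrt{\lambda_i(G_1)^{2}+4\left(1+\lambda_i(G_1)\right)^{2}}\right)$; neither of the two expressions you wrote is this, so you should simply state the quadratic rather than a ``simplified'' form. Second, and more substantively, when $\lambda_i(G_1)=-1$ (a common situation for regular graphs, e.g.\ $G_1$ complete) there is no nonzero $\alpha$: your quadratic in $\alpha$ degenerates, since $A(\overline{G_1})X_i=0$, and the ansatz $(X_i^T,\alpha X_i^T,0)^T$ yields only the single eigenvector $(X_i^T,0,0)^T$. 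You must treat this case separately, taking $(X_i^T,0,0)^T$ with eigenvalue $-1$ and $(0,X_i^T,0)^T$ with eigenvalue $0$; these are exactly the roots of the degenerate quadratic $x^{2}+x=0$, so the statement of the theorem is unaffected, but the proof as you wrote it asserts an $\alpha\neq0$ that does not exist. The paper itself performs precisely this kind of case split (the set $S$ of indices with $\lambda_i(G_1)=-1$) in its NNS normalized-Laplacian theorem, and your eigenvector proof of Theorem \ref{thm:A1} needs the same split.
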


\begin{itemize}
\item \emph{(i) $\lambda_{j}\left(G_{2}\right)$ for each j = 2, 3, . .
. , n$_{2}$;}
\item \emph{(ii) two roots of the equation }\\
\emph{}\\
\emph{~~~~~~~~~~~~~~$x^{2}-\left(\lambda_{i}\left(G_{1}\right)\right)x-\left(\lambda_{i}^{2}\left(G_{1}\right)+2\lambda_{i}\left(G_{1}\right)+1\right)=0$
for each i = 2, 3, . . . , n$_{1}$;}
\item \emph{(iii) the three roots of the equation }\\
\emph{}\\
\emph{~~~~~~~~~~~~~$x^{3}-(r_{1}+r_{2})x^{2}+\left(r_{1}r_{2}-(n_{1}-r_{1}-1)^{2}-n_{1}n_{2}\right)x+r_{2}(n_{1}-r_{1}-1)^{2}=0$}
\end{itemize}
\begin{proof}
By Theorem \textbf{\ref{thm:4.1 Adjacency characteristic polynomial}},
the adjacency characteristic polynomial of $G_{1}\underset{=}{\lor}G_{2}$
is 

\begin{align*}
f_{A(G_{1}\underset{=}{\vee}G_{2})}(x) & =x^{n_{1}}f_{_{A\left(G_{2}\right)}}(x)det\left(xI_{n_{1}}-A(G_{1})-\frac{1}{x}A^{2}(\overline{G_{1}})\right)\left[1-\varGamma_{A(G_{2})}(x)\varGamma_{A(G_{1})+\frac{1}{x}A^{2}(\overline{G_{1}})}(x)\right]\\
 & =x^{n_{1}}\stackrel[j=1]{n_{2}}{\prod}\left(x-\lambda_{j}(G_{2})\right)det\left(xI_{n_{1}}-A(G_{1})-\frac{1}{x}A^{2}(\overline{G_{1}})\right)\left[1-\varGamma_{A(G_{2})}(x)\varGamma_{A(G_{1})+\frac{1}{x}A^{2}(\overline{G_{1}})}(x)\right]
\end{align*}
 Since $G_{1}$ and $G_{2}$ are regular, we can use Lemma \textbf{\ref{lem:()-If-M coronal}}
to get 

\begin{align*}
f_{A(G_{1}\underset{=}{\vee}G_{2})}(x) & =x^{n_{1}}\stackrel[j=1]{n_{2}}{\prod}\left(x-\lambda_{j}(G_{2})\right)det\left(xI_{n_{1}}-A(G_{1})-\frac{1}{x}A^{2}(\overline{G_{1}})\right)\left[1-\frac{n_{2}}{x-r_{2}}\,\frac{n_{1}}{x-r_{1}-\frac{1}{x}(n_{1}-r_{1}-1)^{2}}\right]\\
\\
 & =x^{n_{1}}\stackrel[j=1]{n_{2}}{\prod}\left(x-\lambda_{j}(G_{2})\right)det\left(xI_{n_{1}}-A(G_{1})-\frac{1}{x}(J-I-A(G_{1}))^{2}\right)\left[1-\frac{n_{1}n_{2}}{(x-r_{2})(x-r_{1}-\frac{1}{x}(n_{1}-r_{1}-1)^{2})}\right]\\
\\
 & =x^{n_{1}}\stackrel[j=1]{n_{2}}{\prod}\left(x-\lambda_{j}(G_{2})\right)det\left(xI_{n_{1}}-A(G_{1})-\frac{1}{x}(J^{2}-2J+I-2r_{1}J+2A(G_{1})+A^{2}(G_{1}))\right)\\
\\
 & \cdot\left[1-\frac{n_{1}n_{2}}{(x-r_{2})(x-r_{1}-\frac{1}{x}(n_{1}-r_{1}-1)^{2})}\right]\\
\\
 & =x^{n_{1}}\stackrel[j=1]{n_{2}}{\prod}\left(x-\lambda_{j}(G_{2})\right)\left[det\left(B\right)-\frac{1}{x}(n_{1}-2-2r_{1})1_{n_{1}}^{T}adj\left(B\right)1_{n_{1}}\right]\left[1-\frac{n_{1}n_{2}}{(x-r_{2})(x-r_{1}-\frac{1}{x}(n_{1}-r_{1}-1)^{2})}\right],
\end{align*}
 where $B=(x-\frac{1}{x})I_{n_{1}}-(1+\frac{2}{x})A(G_{1})-\frac{1}{x}A^{2}(G_{1}).$\\
Thus, based on Definition\textbf{ \ref{def:The-M-coronal-=000393(x)}
}and Lemma \textbf{\ref{lem:()-If-M coronal}}, we have
\begin{align*}
f_{A(G_{1}\underset{=}{\vee}G_{2})}(x)= & x^{n_{1}}\stackrel[j=1]{n_{2}}{\prod}\left(x-\lambda_{j}(G_{2})\right)\stackrel[i=1]{n_{1}}{\prod}\left[\left(x-\frac{1}{x}\right)-\left(1+\frac{2}{x}\right)\lambda_{i}(G_{1})-\frac{1}{x}\lambda_{i}^{2}(G_{1})\right]\\
\\
\cdot & \left[1-\frac{1}{x}(n_{1}-2-2r_{1})\varGamma_{\frac{1}{x}I_{n_{1}}+(1+\frac{2}{x})A(G_{1})+\frac{1}{x}A^{2}(G_{1})}(x)\right]\left[1-\frac{n_{1}n_{2}}{(x-r_{2})(x-r_{1}-\frac{1}{x}(n_{1}-r_{1}-1)^{2})}\right]\\
\\
= & x^{n_{1}}\stackrel[j=1]{n_{2}}{\prod}\left(x-\lambda_{j}(G_{2})\right)\stackrel[i=1]{n_{1}}{\prod}\left[\left(x-\frac{1}{x}\right)-\left(1+\frac{2}{x}\right)\lambda_{i}(G_{1})-\frac{1}{x}\lambda_{i}^{2}(G_{1})\right]\left[1-\frac{n_{1}-2-2r_{1}}{x}\cdot\frac{n_{1}}{x-\frac{1}{x}-r_{1}-\frac{2r_{1}}{x}-\frac{r_{1}^{2}}{x}}\right]\\
\\
\cdot & \left[1-\frac{n_{1}n_{2}}{(x-r_{2})(x-r_{1}-\frac{1}{x}(n_{1}-r_{1}-1)^{2})}\right]\\
\\
 & =\stackrel[j=1]{n_{2}}{\prod}\left(x-\lambda_{j}(G_{2})\right)x^{n_{1}}\stackrel[i=1]{n_{1}}{\prod}\left(\left(x-\frac{1}{x}\right)-\left(1+\frac{2}{x}\right)\lambda_{i}(G_{1})-\frac{1}{x}\lambda_{i}^{2}(G_{1})\right)\\
\\
 & \cdot\left(1-\frac{n_{1}(n_{1}-2-2r_{1})}{x^{2}-r_{1}x-r_{1}^{2}-2r_{1}-1}\right)\left(\frac{(x-r_{2})\left(x(x-r_{1})-(n_{1}-r_{1}-1)^{2}\right)-n_{1}n_{2}x}{(x-r_{2})\left(x(x-r_{1})-(n_{1}-r_{1}-1)^{2}\right)}\right)\\
\\
 & =\stackrel[j=2]{n_{2}}{\prod}\left(x-\lambda_{j}(G_{2})\right)\stackrel[i=2]{n_{1}}{\prod}\left(x^{2}-1-(x+2)\lambda_{i}(G_{1})-\lambda_{i}^{2}(G_{1})\right)\\
\\
 & \cdot\left(x(x-r_{1})-(n_{1}-r_{1}-1)^{2}\right)\left(\frac{(x-r_{2})\left(x(x-r_{1})-(n_{1}-r_{1}-1)^{2}\right)-n_{1}n_{2}x}{(x-r_{2})\left(x(x-r_{1})-(n_{1}-r_{1}-1)^{2}\right)-n_{1}n_{2}x}\right)\\
\\
 & =\stackrel[j=2]{n_{2}}{\prod}\left(x-\lambda_{j}(G_{2})\right)\stackrel[i=2]{n_{1}}{\prod}\left(x^{2}-1-(x+2)\lambda_{i}(G_{1})-\lambda_{i}^{2}(G_{1})\right)\\
\\
 & \cdot\left((x-r_{2})\left(x(x-r_{1})-(n_{1}-r_{1}-1)^{2}\right)-n_{1}n_{2}x\right)\\
\\
 & =\stackrel[j=2]{n_{2}}{\prod}\left(x-\lambda_{j}(G_{2})\right)\stackrel[i=2]{n_{1}}{\prod}\left(x^{2}-(\lambda_{i}(G_{1}))x-\left(\lambda_{i}^{2}(G_{1})+2\lambda_{i}(G_{1})+1\right)\right)\\
\\
 & \cdot\left(x^{3}-(r_{1}+r_{2})x^{2}+(r_{1}r_{2}-(n_{1}-r_{1}-1)^{2}-n_{1}n_{2})x+r_{2}(n_{1}-r_{1}-1)^{2}\right).
\end{align*}
 
\end{proof}

\subsection{L-spectra of NNS join}

The degrees of the vertices of $G_{1}\underset{=}{\vee}G_{2}$ are:
\begin{center}
d$_{G_{1}\underset{=}{\vee}G_{2}}$ ($u_{i}$) = n$_{1}+n_{2}$$-1$,
i = 1, . . . , n$_{1}$,
\par\end{center}

\begin{center}
d$_{G_{1}\underset{=}{\vee}G_{2}}$($u$$_{i}'$) = n$_{1}-r_{1}$$-1$,
i = 1, . . . , n$_{1}$,
\par\end{center}

\begin{center}
d$_{G_{1}\underset{=}{\vee}G_{2}}$ ($v_{j}$) = r$_{2}$ + n$_{1}$,
j = 1, . . . , n$_{2}$, 
\par\end{center}

so the degrees matrix of $G_{1}\underset{=}{\vee}G_{2}$, that corresponds
to (\textbf{\ref{eq:2.1-1}}) is
\begin{center}
\begin{equation}
D\left(G_{1}\underset{=}{\vee}G_{2}\right)=\left(\begin{array}{ccc}
(n_{1}+n_{2}-1)I_{n_{1}} & O_{n_{1}\times n_{1}} & \,O_{n_{1}\times n_{2}}\\
\\
O_{n_{1}\times n_{1}} & (n_{1}-r_{1}-1)I_{n_{1}} & \,O\\
\\
O_{n_{2}\times n_{1}} & O_{n_{2}\times n_{1}} & \,\,(r_{2}+n_{1})I_{n_{2}}
\end{array}\right)
\end{equation}
\\
\par\end{center}
\begin{thm}
\label{thm:L1}\textup{\emph{Let G$_{1}$ be a r$_{1}$-regular graph
with n$_{1}$ vertices and $G_{2}$ be a $r_{2}$-regular graph with
$n_{2}$vertices. Then the Laplacian spectrum of $G_{1}\underset{=}{\lor}G_{2}$
consists of:}}
\end{thm}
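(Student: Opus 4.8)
The plan is to specialize Theorem~\ref{thm: CHRAC. POLYNOMIAL Laplacian}(a), which gives $f_{L(G_{1}\underset{=}{\vee}G_{2})}(x)$ for arbitrary $G_{i}$, to the regular case and then factor it, just as the adjacency spectrum was obtained in Theorem~\ref{thm:A1}. First I would substitute $D(G_{1})=r_{1}I_{n_{1}}$, $L(G_{1})=r_{1}I_{n_{1}}-A(G_{1})$, $L(G_{2})=r_{2}I_{n_{2}}-A(G_{2})$ and $A(\overline{G_{1}})=J_{n_{1}}-I_{n_{1}}-A(G_{1})$ into the three determinant factors and the bracketed factor of that formula. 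Since $G_{1}$ and $G_{2}$ are regular, every matrix whose coronal occurs there has constant row sums, so Lemma~\ref{lem:()-If-M coronal} turns each $\varGamma$ into an explicit rational function of $x$; in particular $\varGamma_{L(G_{2})}(x-n_{1})=\frac{n_{2}}{x-n_{1}}$, and after computing the constant row sum of $L(G_{1})-D(G_{1})+A(\overline{G_{1}})\big((x-n_{1}+1)I_{n_{1}}+D(G_{1})\big)^{-1}A(\overline{G_{1}})$ the second coronal becomes explicit as well.

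Next I would simplify the determinants. The factor $\det\big((x-n_{1}+1)I_{n_{1}}+D(G_{1})\big)$ collapses to $(x-n_{1}+r_{1}+1)^{n_{1}}$. For the remaining $n_{1}\times n_{1}$ determinant I would use the identity $A^{2}(\overline{G_{1}})=(n_{1}-2r_{1}-2)J_{n_{1}}+I_{n_{1}}+2A(G_{1})+A^{2}(G_{1})$, which follows from $A(G_{1})\mathbf{1}_{n_{1}}=r_{1}\mathbf{1}_{n_{1}}$, to rewrite the matrix inside that determinant in the form $p\big(A(G_{1})\big)+c\,J_{n_{1}}$ with an explicit quadratic polynomial $p$ (its coefficients rational in $x$) and an explicit scalar $c$. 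Lemma~\ref{lem:det(a+aJ)} then gives $\det\big(p(A(G_{1}))+cJ_{n_{1}}\big)=\det p(A(G_{1}))+c\,\mathbf{1}_{n_{1}}^{T}\mathrm{adj}\,p(A(G_{1}))\,\mathbf{1}_{n_{1}}$, and because $\mathbf{1}_{n_{1}}$ is an eigenvector of $A(G_{1})$ the right-hand side equals $\big(\prod_{i=2}^{n_{1}}p(\lambda_{i}(G_{1}))\big)\big(p(r_{1})+c\,n_{1}\big)$. Multiplying through by the factor $(x-n_{1}+r_{1}+1)^{n_{1}}$ clears the $\frac{1}{x-n_{1}+r_{1}+1}$ denominators hidden in $p$ and $c$, so each $p(\lambda_{i}(G_{1}))$ becomes a genuine quadratic polynomial in $x$; rewriting $\lambda_{i}(G_{1})=r_{1}-\mu_{i}(G_{1})$ via Remark~\ref{rem:ALQl} puts these $n_{1}-1$ quadratics into the stated form.

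Finally I would assemble the scalar leftovers. The factor $\det\big((x-n_{1})I_{n_{2}}-L(G_{2})\big)=\prod_{j=1}^{n_{2}}\big((x-n_{1})-\mu_{j}(G_{2})\big)$ provides the eigenvalues $n_{1}+\mu_{j}(G_{2})$ for $j=2,\dots,n_{2}$, while its $\mu_{1}(G_{2})=0$ factor $x-n_{1}$ cancels the pole of $\varGamma_{L(G_{2})}(x-n_{1})$; the scalars $p(r_{1})+c\,n_{1}$, the surviving coronal denominators, and the bracketed factor of Theorem~\ref{thm: CHRAC. POLYNOMIAL Laplacian}(a) then combine, after cancelling common linear factors, into a single cubic in $x$ whose three roots are the remaining eigenvalues. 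The main obstacle is purely organizational: keeping track of exactly which linear factors cancel between the determinants and the poles introduced by the coronals, verifying that the apparent singularities at $x=n_{1}$ and $x=n_{1}-r_{1}-1$ are removable, and reducing the product of rational expressions to a clean cubic --- the same bookkeeping that appears in the proof of Theorem~\ref{thm:A1}, only heavier because of the Laplacian shifts $x\mapsto x-n_{1}$ and $x\mapsto x-n_{1}-n_{2}+1$. A more transparent alternative is to mimic the proof of Theorem~\ref{thm: about NS for NL} and build an orthogonal eigenbasis of $L(G_{1}\underset{=}{\vee}G_{2})$ directly: the vectors $\left(0_{1\times n_{1}},0_{1\times n_{1}},Z_{j}^{T}\right)^{T}$ with $A(G_{2})Z_{j}=\lambda_{j}(G_{2})Z_{j}$ and $\mathbf{1}_{n_{2}}^{T}Z_{j}=0$; the vectors $\left(X_{i}^{T},\alpha X_{i}^{T},0_{1\times n_{2}}\right)^{T}$ with $A(G_{1})X_{i}=\lambda_{i}(G_{1})X_{i}$ and $\mathbf{1}_{n_{1}}^{T}X_{i}=0$, where $\alpha$ must be a root of a quadratic with coefficients depending only on $\lambda_{i}(G_{1}),n_{2},r_{1}$ and each root yields one of the claimed eigenvalues; and three vectors of the form $\left(\alpha\mathbf{1}_{n_{1}}^{T},\beta\mathbf{1}_{n_{1}}^{T},\gamma\mathbf{1}_{n_{2}}^{T}\right)^{T}$, whose existence reduces to a $3\times3$ eigenvalue problem that yields the cubic.
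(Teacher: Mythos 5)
Your proposal is correct and follows essentially the same route as the paper: the paper likewise proves Theorem \ref{thm:L1} by substituting $D(G_{1})=r_{1}I_{n_{1}}$ into Theorem \ref{thm: CHRAC. POLYNOMIAL Laplacian}(a), evaluating both coronals with Lemma \ref{lem:()-If-M coronal}, and factoring the resulting expression into the $n_{1}+\mu_{j}(G_{2})$ factors, the $n_{1}-1$ quadratics, and the cubic. Your explicit use of Lemma \ref{lem:det(a+aJ)} with $A^{2}(\overline{G_{1}})=(n_{1}-2r_{1}-2)J_{n_{1}}+I_{n_{1}}+2A(G_{1})+A^{2}(G_{1})$ is only a cosmetic variant of the paper's direct factorization of the middle determinant (all matrices involved are polynomials in $A(G_{1})$ and $J_{n_{1}}$, hence simultaneously diagonalizable), so the two arguments coincide in substance.
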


\begin{itemize}
\item \emph{$n_{1}+\mu_{j}(G_{2})$ for each $j=2,3,...,n_{2}$; }
\item \emph{two roots of the equation
\[
x^{2}+(2r_{1}-2n_{1}-n_{2}-\mu_{i}(G_{1})+2)x+n_{1}^{2}-2r_{1}n_{1}-2n_{1}+n_{1}n_{2}-r_{1}n_{2}-n_{2}+\mu_{i}(G_{1})(n_{1}+r_{1}+1-\mu_{i}(G_{1}))=0
\]
  for each $i=2,3,...,n_{1}$; }
\item \emph{the three roots of the equation
\[
x^{3}+\left(2r_{1}-3n_{1}-n_{2}+2\right)x^{2}+\left(n_{1}n_{2}-n_{2}r_{1}-n_{2}+2n_{1}^{2}-2r_{1}n_{1}-2n_{1}\right)x=0.
\]
}
\end{itemize}
\begin{proof}
By substituting $D\left(G_{1}\right)=r_{1}I_{n_{1}}$ in Theorem \textbf{\ref{thm: CHRAC. POLYNOMIAL Laplacian}},
the Laplacian characteristic polynomial of $G_{1}\underset{=}{\lor}G_{2}$
is

\begin{align*}
f_{_{L\left(G_{1}\underset{=}{\vee}G_{2}\right)}}\left(x\right) & =det\left(\left(x-n_{1}\right)I_{n_{2}}-L\left(G_{2}\right)\right)det\left(x-n_{1}+1+r_{1}\right)I_{n_{1}}\\
\\
 & \cdot det\left(\left(x-n_{1}-n_{2}+r_{1}+1\right)I_{n_{1}}-L\left(G_{1}\right)-\frac{1}{x-n_{1}+r_{1}+1}A^{2}\left(\overline{G_{1}}\right)\right)\\
\\
 & \cdot\left[1-\varGamma_{_{L\left(G_{2}\right)}}\left(x-n_{1}\right)\varGamma_{_{L\left(G_{1}\right)+\frac{1}{x-n_{1}+1+r_{1}}A^{2}\left(\overline{G_{1}}\right)}}\left(x-n_{1}-n_{2}+1+r_{1}\right)\right].
\end{align*}
 Using Lemma \textbf{\ref{lem:()-If-M coronal}}, we obtain 

\begin{align*}
f_{_{L\left(G_{1}\underset{=}{\vee}G_{2}\right)}}\left(x\right) & =det\left(\left(x-n_{1}\right)I_{n_{2}}-L\left(G_{2}\right)\right)\left(x-n_{1}+r_{1}+1\right)^{n_{1}}\\
\\
 & \cdot det\left(\left(x-n_{1}-n_{2}+r_{1}+1\right)I_{n_{1}}-L\left(G_{1}\right)-\frac{1}{x-n_{1}+r_{1}+1}A^{2}\left(\overline{G_{1}}\right)\right)\\
\\
 & \cdot\left[1-\frac{n_{2}n_{1}}{\left(x-n_{1}\right)\left(x-n_{1}-n_{2}+1+r_{1}-\frac{\left(n_{1}-r_{1}-1\right)^{2}}{x-n_{1}+r_{1}+1}\right)}\right]\\
\\
 & =\stackrel[j=1]{n_{2}}{\prod}\left(x-n_{1}-\mu_{j}\left(G_{2}\right)\right)\left(x-n_{1}+r_{1}+1\right)^{n_{1}}\left(\left(x-n_{1}-n_{2}+1+r_{1}\right)-\frac{\left(n_{1}-r_{1}-1\right)^{2}}{x-n_{1}+r_{1}+1}\right)\\
\\
 & \cdot\stackrel[i=2]{n_{1}}{\prod}\left(x-n_{1}-n_{2}+1+r_{1}-\mu_{i}\left(G_{1}\right)-\frac{\left(\mu_{i}\left(G_{1}\right)-r_{1}-1\right)^{2}}{x-n_{1}+r_{1}+1}\right)\\
\\
 & \cdot\left[1-\frac{n_{2}n_{1}}{\left(x-n_{1}\right)\left(x-n_{1}-n_{2}+1+r_{1}-\frac{\left(n_{1}-r_{1}-1\right)^{2}}{x-n_{1}+r_{1}+1}\right)}\right]\\
\\
 & =\stackrel[j=2]{n_{2}}{\prod}\left(x-n_{1}-\mu_{j}\left(G_{2}\right)\right)\stackrel[i=2]{n_{1}}{\prod}\left(\right.x^{2}+\left(2r_{1}-2n_{1}-n_{2}-\mu_{i}\left(G_{1}\right)+2\right)x+n_{1}^{2}-2r_{1}n_{1}\\
\\
 & -2n_{1}+n_{1}n_{2}-r_{1}n_{2}-n_{2}+\mu_{i}\left(G_{1}\right)\left(n_{1}+r_{1}+1-\mu_{i}\left(G_{1}\right)\right)\left.\right)\\
\\
 & \cdot\left[x^{3}+\left(2r_{1}-3n_{1}-n_{2}+2\right)x^{2}+\left(n_{1}n_{2}-n_{2}r_{1}-n_{2}+2n_{1}^{2}-2r_{1}n_{1}-2n_{1}\right)x\right].
\end{align*}

This completes the proof.
\end{proof}

\subsection{Q-spectra of NNS join}

\begin{thm}
\label{thm:Q1}\textup{\emph{Let G$_{1}$ be a r$_{1}$-regular graph
with n$_{1}$ vertices and $G_{2}$ be a $r_{2}$-regular graph with
$n_{2}$vertices.Then the signless Laplacian spectrum of G$_{1}$$\underset{=}{\vee}$
G$_{2}$ consists of:}}
\begin{itemize}
\item \emph{$n_{1}+\nu_{j}(G_{2})$ for each $j=2,3,...,n_{2};$}
\item \textup{\emph{two roots of the equation}}\emph{ $x^{2}+(2r_{1}-2n_{1}-n_{2}-\nu_{i}(G_{1})+2)x+n_{1}^{2}-2r_{1}n_{1}-2n_{1}+n_{1}n_{2}-r_{1}n_{2}-n_{2}+4r_{1}+\nu_{i}(G_{1})(n_{1}+r_{1}-3-\nu_{i}(G_{1}))$=0
for each $i=2,3,...,n_{1};$ }
\item \textup{\emph{the three roots of the equation }}\emph{
\begin{multline*}
x^{3}+\left(2-3n_{1}-n_{2}-2r_{2}\right)x^{2}+\left(n_{1}n_{2}-n_{2}r_{1}-n_{2}+2n_{1}^{2}+2r_{1}n_{1}-2n_{1}-2r_{1}-2r_{1}^{2}+4r_{2}n_{1}-4r_{2}+2r_{2}n_{2}\right)x\\
+2n_{1}r_{1}^{2}+2r_{1}n_{1}-2r_{1}n_{1}^{2}-2r_{2}n_{1}n_{2}+2r_{1}r_{2}n_{2}+2r_{2}n_{2}+4r_{2}r_{1}^{2}+4r_{1}r_{2}-4r_{1}r_{2}n_{1}=0
\end{multline*}
}
\end{itemize}
\begin{proof}
The proof is similar to the proof of Theorem \textbf{\ref{thm:L1}}.
\end{proof}
\end{thm}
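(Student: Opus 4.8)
The plan is to follow the proof of Theorem~\ref{thm:L1} almost verbatim, with $Q$ in place of $L$, starting from the signless Laplacian characteristic polynomial of $G_{1}\underset{=}{\vee}G_{2}$ supplied by Theorem~\ref{thm: signless laplacian characteristic}(a). First I would substitute $D(G_{1})=r_{1}I_{n_{1}}$. This collapses $(x-n_{1}+1)I_{n_{1}}+D(G_{1})$ to the scalar matrix $(x-n_{1}+r_{1}+1)I_{n_{1}}$, so that $\det\bigl((x-n_{1}+1)I_{n_{1}}+D(G_{1})\bigr)=(x-n_{1}+r_{1}+1)^{n_{1}}$ and the inverse inside the remaining determinant is just $\frac{1}{x-n_{1}+r_{1}+1}I_{n_{1}}$; moreover $-Q(G_{1})+D(G_{1})=-A(G_{1})$ and $Q(G_{1})-D(G_{1})=A(G_{1})$. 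Using $A(\overline{G_{1}})=J_{n_{1}}-I_{n_{1}}-A(G_{1})$, the middle determinant becomes $\det\bigl((x-n_{1}-n_{2}+r_{1}+1)I_{n_{1}}-Q(G_{1})-\frac{1}{x-n_{1}+r_{1}+1}A^{2}(\overline{G_{1}})\bigr)$.

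Next, since $G_{1}$ and $G_{2}$ are regular, I would use Lemma~\ref{lem:()-If-M coronal} to evaluate the two coronals: $Q(G_{2})$ has constant row sums $2r_{2}$, so $\varGamma_{Q(G_{2})}(x-n_{1})=\frac{n_{2}}{x-n_{1}-2r_{2}}$; and $A(G_{1})+\frac{1}{x-n_{1}+r_{1}+1}A^{2}(\overline{G_{1}})$ has constant row sums $r_{1}+\frac{(n_{1}-r_{1}-1)^{2}}{x-n_{1}+r_{1}+1}$, so the second coronal evaluates to $\frac{n_{1}}{(x-n_{1}-n_{2}-r_{1}+1)-(n_{1}-r_{1}-1)^{2}/(x-n_{1}+r_{1}+1)}$. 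In parallel I would diagonalize the middle determinant over an orthonormal eigenbasis of $A(G_{1})$: on $\mathbf{1}_{n_{1}}$ one has $Q(G_{1})\mathbf{1}_{n_{1}}=2r_{1}\mathbf{1}_{n_{1}}$ and $A(\overline{G_{1}})\mathbf{1}_{n_{1}}=(n_{1}-r_{1}-1)\mathbf{1}_{n_{1}}$, while on the orthogonal complement $A(\overline{G_{1}})X_{i}=-(1+\lambda_{i}(G_{1}))X_{i}$ and $Q(G_{1})X_{i}=\nu_{i}(G_{1})X_{i}$, where $\nu_{i}(G_{1})=r_{1}+\lambda_{i}(G_{1})$ by Remark~\ref{rem:ALQl}. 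Thus the middle determinant splits as the single factor coming from the $\mathbf{1}_{n_{1}}$ direction --- which is exactly the denominator of the second coronal --- times $\prod_{i=2}^{n_{1}}\bigl((x-n_{1}-n_{2}+r_{1}+1)-\nu_{i}(G_{1})-(1+\lambda_{i}(G_{1}))^{2}/(x-n_{1}+r_{1}+1)\bigr)$.

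Then I would clear denominators. The factor $\det\bigl((x-n_{1})I_{n_{2}}-Q(G_{2})\bigr)=\prod_{j=1}^{n_{2}}(x-n_{1}-\nu_{j}(G_{2}))$ contributes $x-n_{1}-\nu_{1}(G_{2})=x-n_{1}-2r_{2}$, which cancels the pole of $\varGamma_{Q(G_{2})}$, and the factor $(x-n_{1}+r_{1}+1)^{n_{1}}$ absorbs the $n_{1}-1$ simple poles of the $i=2,\dots,n_{1}$ terms together with the pole of the $\mathbf{1}_{n_{1}}$-direction factor (equivalently, of the second coronal). What remains is a polynomial that factors as $\prod_{j=2}^{n_{2}}(x-n_{1}-\nu_{j}(G_{2}))$ times $n_{1}-1$ quadratics times one cubic. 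Substituting $\lambda_{i}(G_{1})=\nu_{i}(G_{1})-r_{1}$ and expanding the quadratic gives $x^{2}+(2r_{1}-2n_{1}-n_{2}-\nu_{i}(G_{1})+2)x+n_{1}^{2}-2r_{1}n_{1}-2n_{1}+n_{1}n_{2}-r_{1}n_{2}-n_{2}+4r_{1}+\nu_{i}(G_{1})(n_{1}+r_{1}-3-\nu_{i}(G_{1}))$, and multiplying out the factor coming from the $\mathbf{1}_{n_{1}}$ direction against the remaining bracket $\bigl[1-\varGamma_{Q(G_{2})}(x-n_{1})\,\varGamma\bigr]$ ($\varGamma$ denoting the second coronal above) yields the stated cubic.

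The only real obstacle is the bookkeeping in this last step. One has to keep straight which eigenvalue factor is the one that merges with the coronal bracket: it is the factor attached to $\mathbf{1}_{n_{1}}$, on which $A(\overline{G_{1}})$ acts by multiplication by $n_{1}-r_{1}-1$ rather than by $-(1+\lambda_{1}(G_{1}))=-(1+r_{1})$, so this factor does \emph{not} have the same shape as the $i\ge 2$ quadratics. One also has to check that every $x$-dependent denominator introduced by the two coronals is cancelled by a determinant factor, leaving a genuine polynomial, and then expand the resulting degree-three factor. A point of difference from Theorem~\ref{thm:L1} worth flagging: for $Q$ the all-ones vector of $G_{1}\underset{=}{\vee}G_{2}$ is not an eigenvector for eigenvalue $0$, so the cubic has a nonzero constant term and must be expanded carefully to land on the coefficients displayed in the statement. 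Conceptually this is the identical computation to Theorems~\ref{thm:A1} and~\ref{thm:L1} with $Q$ replacing $A$ and $L$; it is simply the most error-prone part, and is where I would concentrate the write-up.
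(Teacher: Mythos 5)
Your proposal is correct and follows exactly the route the paper intends: the paper's proof of this theorem is literally "similar to Theorem \ref{thm:L1}", i.e.\ substitute $D(G_{1})=r_{1}I_{n_{1}}$ into Theorem \ref{thm: signless laplacian characteristic}(a), evaluate both coronals via Lemma \ref{lem:()-If-M coronal}, factor the middle determinant over an eigenbasis of $A(G_{1})$ (using $A(\overline{G_{1}})X_{i}=-(1+\lambda_{i})X_{i}$ and $\nu_{i}=r_{1}+\lambda_{i}$), and clear denominators, which is precisely what you describe. Your bookkeeping checks out (the quadratic and the cubic's nonzero constant term both match the statement), so this is the same argument, just written out.
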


\subsection{$\mathcal{L}$-spectra of NNS join}

Let $G_{1}$ be a $r_{1}$-regular graph on order $n_{1}$. Let $S$
be a subset of $\left\{ 2,3,\ldots,n_{1}\right\} $ such that $\delta_{i}\left(G_{1}\right)=1+\frac{1}{r_{1}}$
for $i\in S$, and denote the cardinality of $S$ by $n(S)$. Let
$G_{2}$ be a $r_{2}$-regular graph on order $n_{2}$. In the following
theorem we determine the normalized Laplacian spectrum of $G_{1}\underset{=}{\vee}G_{2}$
in terms of the normalized Laplacian eigenvalues of $G_{1}$ and $G_{2}.$
The proof is slightly more complicated than the proof of Theorem \textbf{\ref{thm: about NS for NL}
}and we consider three cases.
\begin{thm}
\textup{\label{thm: normalized Laplacian  of NNS for regular graph}}\textup{\emph{ }}
\end{thm}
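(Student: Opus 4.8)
The plan is to mimic the proof of Theorem~\ref{thm: about NS for NL} by constructing an orthogonal basis of eigenvectors of $\mathcal{L}\left(G_{1}\underset{=}{\vee}G_{2}\right)$, but now splitting into three cases according to the structure of the normalized Laplacian eigenvalues of $G_{1}$. First I would write down the adjacency matrix of $G_{1}\underset{=}{\vee}G_{2}$ in the block form of \eqref{eq:2.1-1}, record the degree matrix $D\left(G_{1}\underset{=}{\vee}G_{2}\right)=\mathrm{diag}\left((n_{1}+n_{2}-1)I_{n_{1}},\,(n_{1}-r_{1}-1)I_{n_{1}},\,(r_{2}+n_{1})I_{n_{2}}\right)$, and form $\mathcal{L}=I-D^{-1/2}AD^{-1/2}$ explicitly as a $3\times 3$ block matrix whose off-diagonal blocks involve $A(G_{1})$, $A(\overline{G_{1}})=J-I-A(G_{1})$, and the all-ones blocks, each scaled by the appropriate reciprocal square roots of the three degrees.

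Next I would handle the three families of eigenvectors. For $j=2,\ldots,n_{2}$, if $Z_{j}$ is an eigenvector of $A(G_{2})$ orthogonal to $\mathbf{1}_{n_{2}}$ then $\left(0,0,Z_{j}^{T}\right)^{T}$ is an eigenvector of $\mathcal{L}$, giving eigenvalues $1+\frac{r_{2}\left(\delta_{j}(G_{2})-1\right)}{n_{1}+r_{2}}$ exactly as before. For the second family, take $X_{i}$ an eigenvector of $A(G_{1})$ with eigenvalue $\lambda_{i}(G_{1})$ orthogonal to $\mathbf{1}_{n_{1}}$; then $A(\overline{G_{1}})X_{i}=(J-I-A(G_{1}))X_{i}=-(1+\lambda_{i}(G_{1}))X_{i}$, so one looks for $\alpha$ such that $\left(X_{i}^{T},\alpha X_{i}^{T},0\right)^{T}$ is an eigenvector. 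This yields a quadratic in $\alpha$ (with coefficients depending on $\lambda_{i}(G_{1})$ and the degrees), and substituting the two roots back gives two eigenvalues per index $i$ — except the subtlety is that when $\delta_{i}(G_{1})=1+\frac{1}{r_{1}}$, equivalently $\lambda_{i}(G_{1})=-1$, the vector $A(\overline{G_{1}})X_{i}$ vanishes and the quadratic degenerates; this is exactly why $S$ and $n(S)$ are introduced. So I expect the theorem statement (cut off in the excerpt) to list: eigenvalue $1$ with some multiplicity tied to $n(S)$, a cleaner pair of eigenvalues for indices in $S$, the generic quadratic pair for $i\notin S$, the $G_{2}$-family above, and finally a cubic coming from the $3$-dimensional invariant subspace.

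For that last piece I would look for eigenvectors of the form $Y=\left(a\mathbf{1}_{n_{1}}^{T},\,b\mathbf{1}_{n_{1}}^{T},\,c\mathbf{1}_{n_{2}}^{T}\right)^{T}$; using $A(G_{1})\mathbf{1}_{n_{1}}=r_{1}\mathbf{1}_{n_{1}}$, $A(\overline{G_{1}})\mathbf{1}_{n_{1}}=(n_{1}-r_{1}-1)\mathbf{1}_{n_{1}}$, $A(G_{2})\mathbf{1}_{n_{2}}=r_{2}\mathbf{1}_{n_{2}}$, $\mathcal{L}Y=xY$ becomes a $3\times 3$ linear system in $(a,b,c)$, and setting its determinant to zero produces the cubic. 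I would then verify the eigenvector count: $n_{2}-1$ from the first family, $2(n_{1}-1-n(S))$ generic quadratic roots, some number from the degenerate $i\in S$ indices, $3$ from the cubic, plus copies of the eigenvalue $1$ — totalling $2n_{1}+n_{2}$, and confirm by a direct check (or by noting the constructed vectors are mutually orthogonal and span) that these exhaust the spectrum. The main obstacle is the bookkeeping in the degenerate case $\lambda_{i}(G_{1})=-1$: one must check whether $\left(X_{i}^{T},0,0\right)^{T}$ and $\left(0,X_{i}^{T},0\right)^{T}$ (or suitable combinations) are then eigenvectors, determine the corresponding eigenvalues, and make sure no eigenvectors are double-counted or missed — getting the multiplicity $n(S)$ correctly attached to the right eigenvalue is the delicate point, whereas the rest is routine linear algebra parallel to Theorem~\ref{thm: about NS for NL}.
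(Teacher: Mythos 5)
Your plan follows essentially the same route as the paper: the family $\left(0,0,Z_j^{T}\right)^{T}$ from eigenvectors of $A(G_2)$, the family $\left(X_i^{T},\alpha X_i^{T},0\right)^{T}$ with $\alpha$ determined by a quadratic (using $A(\overline{G_1})X_i=-(1+\lambda_i(G_1))X_i$), extra eigenvectors for the degenerate indices in $S$, and a cubic from the three-dimensional subspace spanned by $\left(\mathbf{1}_{n_1}^{T},0,0\right)^{T},\left(0,\mathbf{1}_{n_1}^{T},0\right)^{T},\left(0,0,\mathbf{1}_{n_2}^{T}\right)^{T}$ is exactly the paper's construction. The one point you flag but leave unresolved is handled in the paper as a separate case: when $S=\{2,\dots,n_1\}$ one has $G_1=K_{n_1}$, so the split vertices $u_i'$ are isolated and the middle block of $\mathcal{L}$ is zero by the convention $d^{-1/2}=0$ (your formula for $D^{-1/2}$ would divide by $n_1-r_1-1=0$ there); consequently $\left(0,X_i^{T},0\right)^{T}$ carries eigenvalue $0$ rather than $1$, and the cubic collapses to a quadratic with roots $0$ and $\frac{n_1^{2}+2n_1n_2+r_2n_2-n_1}{(r_2+n_1)(n_1+n_2-1)}$, while for $\emptyset\neq S\subsetneq\{2,\dots,n_1\}$ the eigenvalues attached to $S$ are $1^{[n(S)]}$ and $\left(1+\frac{1}{n_1+n_2-1}\right)^{[n(S)]}$ as you anticipated.
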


\begin{lyxlist}{00.00.0000}
\item [{\emph{a)}}] \emph{If $S=\varPhi$, then the normalized Laplacian
spectrum of $G_{1}\underset{=}{\vee}G_{2}$ consists of:}\\
\emph{}\\
\emph{i) $1+\frac{r_{2}\left(\delta_{i}\left(G_{2}\right)-1\right)}{n_{1}+r_{2}}$,
for each $i=2,3,...,n_{2}$;}\\
\emph{}\\
\emph{ii) $1+\frac{2\left(1+r_{1}-r_{1}\delta_{i}\left(G_{1}\right)\right)^{2}}{r_{1}\left(1-\delta_{i}\left(G_{1}\right)\right)\left(n_{1}-r_{1}-1\right)\mp\sqrt{\left(n_{1}-r_{1}-1\right)\left[r_{1}^{2}\left(1-\delta_{i}\left(G_{1}\right)\right)^{2}\left(n_{1}-r_{1}-1\right)+4\left(1+r_{1}-r_{1}\delta_{i}\left(G_{1}\right)\right)^{2}\left(n_{1}+n_{2}-1\right)\right]}}$
, for each $\text{i=2,3,...,\ensuremath{n_{1}};}$}\\
\emph{}\\
\emph{iii) the three roots of the equation 
\begin{align*}
\,\,\,\,\,\,\,\, & \left(n_{1}^{2}+n_{1}n_{2}-n_{1}+r_{2}n_{1}+r_{2}n_{2}-r_{2}\right)x^{3}-\left(3n_{1}^{2}+3n_{1}n_{2}-3n_{1}-r_{1}n_{1}+2r_{2}n_{1}+2r_{2}n_{2}-2r_{2}-r_{1}r_{2}\right)x^{2}\\
 & +\left(2n_{1}^{2}+2n_{1}n_{2}-2n_{1}-r_{1}n_{1}+r_{2}n_{2}\right)x=0.
\end{align*}
}
\item [{\emph{b)}}] \emph{If $S=\left\{ 2,3,\ldots,n_{1}\right\} $, then
the normalized Laplacian spectrum of $G_{1}\underset{=}{\vee}G_{2}$
consists of:}\\
\emph{}\\
\emph{i) $1+\frac{r_{2}\left(\delta_{i}\left(G_{2}\right)-1\right)}{n_{1}+r_{2}}$
for each $i=2,3,...,n_{2};$}\\
\emph{}\\
\emph{ii) $(1+\frac{1}{n_{1}+n_{2}-1})^{[n_{1}-1]}$ , $0^{[n_{1}+1]}$
and $\frac{n_{1}^{2}+2n_{1}n_{2}+r_{2}n_{2}-n_{1}}{(r_{2}+n_{1})(n_{1}+n_{2}-1)}$
.}
\item [{\emph{c)}}] \emph{If $S$$\neq\varPhi$ and $S\neq\left\{ 2,3,\ldots,n_{1}\right\} $,
then the normalized Laplacian spectrum of $G_{1}\underset{=}{\vee}G_{2}$
consists of: }\\
\emph{}\\
\emph{i) $1+\frac{r_{2}\left(\delta_{i}\left(G_{2}\right)-1\right)}{n_{1}+r_{2}}$
for each $i=2,3,...,n_{2};$}\\
\emph{}\\
\emph{ii) $1+\frac{2\left(1+r_{1}-r_{1}\delta_{i}\left(G_{1}\right)\right)^{2}}{r_{1}\left(1-\delta_{i}\left(G_{1}\right)\right)\left(n_{1}-r_{1}-1\right)\mp\sqrt{\left(n_{1}-r_{1}-1\right)\left[r_{1}^{2}\left(1-\delta_{i}\left(G_{1}\right)\right)^{2}\left(n_{1}-r_{1}-1\right)+4\left(1+r_{1}-r_{1}\delta_{i}\left(G_{1}\right)\right)^{2}\left(n_{1}+n_{2}-1\right)\right]}}$
for each}\\
\emph{}\\
\emph{ $\,\,\,\,\,\,\,\,i\in\left\{ 2,3,\ldots,n_{1}\right\} \setminus S$;}\\
\emph{}\\
\emph{iii) $1^{[n(S)]}$ and $(1+\frac{1}{n_{1}+n_{2}-1})^{[n(S)]}$.}\\
\emph{}\\
\emph{iv) the three roots of the equation 
\begin{align*}
\,\,\,\,\,\,\,\, & \left(n_{1}^{2}+n_{1}n_{2}-n_{1}+r_{2}n_{1}+r_{2}n_{2}-r_{2}\right)x^{3}-\left(3n_{1}^{2}+3n_{1}n_{2}-3n_{1}-r_{1}n_{1}+2r_{2}n_{1}+2r_{2}n_{2}-2r_{2}-r_{1}r_{2}\right)x^{2}\\
 & +\left(2n_{1}^{2}+2n_{1}n_{2}-2n_{1}-r_{1}n_{1}+r_{2}n_{2}\right)x=0.
\end{align*}
 }
\end{lyxlist}
\begin{proof}
\textbf{(a)} If $S=\varPhi$, then $\delta_{i}\left(G_{1}\right)\neq1+\frac{1}{r_{1}}$
for each $i=2,3,\ldots,n_{1}$, so $\lambda_{i}\left(G_{1}\right)\neq-1$
for each $i=2,3,\ldots,n_{1}$. The normalized Laplacian matrix of
$G_{1}\underset{=}{\vee}G_{2}$ is: 

\[
\mathcal{L}\left(G_{1}\underset{=}{\vee}G_{2}\right)=\left(\begin{array}{ccc}
I_{n_{1}}-\frac{A\left(G_{1}\right)}{n_{1}+n_{2}-1}\,\,\,\,\,\,\,\, & \frac{-A\left(\textrm{\ensuremath{\overline{G_{1}}}}\right)}{\sqrt{\left(n_{1}+n_{2}-1\right)\left(n_{1}-r_{1}-1\right)}}\,\,\,\,\,\,\,\,\, & \frac{-J_{n_{1}\times n_{2}}}{\sqrt{\left(n_{1}+n_{2}-1\right)\left(r_{2}+n_{1}\right)}}\\
\\
\frac{-A\left(\overline{G_{1}}\right)}{\sqrt{\left(n_{1}+n_{2}-1\right)\left(n_{1}-r_{1}-1\right)}} & I_{n_{1}} & O_{n_{1}\times n_{2}}\\
\\
\frac{-J_{n_{2}\times n_{1}}}{\sqrt{\left(n_{1}+n_{2}-1\right)\left(r_{2}+n_{1}\right)}} & O_{n_{2}\times n_{1}} & I_{n_{2}}-\frac{A\left(G_{2}\right)}{r_{2}+n_{1}}
\end{array}\right).
\]

Since $G_{2}$ is $r_{2}$-regular, the vector $\mathbf{1}_{n_{2}}$is
an eigenvector of $A(G_{2})$ that corresponds to $\lambda_{1}\left(G_{2}\right)=r_{2}$.
For $i=2,3,\ldots,n_{2}$ let $Z_{i}$ be an eigenvector of $A\left(G_{2}\right)$
that corresponds to $\lambda_{i}\left(G_{2}\right)$. Then $\mathbf{1}_{n_{2}}^{T}Z_{i}=0$
and $\left(0_{1\times n_{1}},0_{1\times n_{1}},Z_{i}^{T}\right)^{T}$
is an eigenvector of $\mathcal{L}\left(G_{1}\underset{=}{\vee}G_{2}\right)$
corresponding to the eigenvalue $1-\frac{\lambda_{i}\left(G_{2}\right)}{r_{2}+n_{1}}$
. By Remark \textbf{\ref{rem:ALQl}}, $1+\frac{r_{2}\left(\delta_{i}\left(G_{2}\right)-1\right)}{n_{1}+r_{2}}$
are an eigenvalues of $\landupintop\landupintop$$\mathcal{L}\left(G_{1}\underset{=}{\lor}G_{2}\right)$
for $i=2,...,n_{2}.$

For $i=2,...,n_{1}$, let $X_{i}$ be an eigenvector of $A(G_{1})$
corresponding to the eigenvalue $\lambda_{i}\left(G_{1}\right)$.
We now look for a non zero real number $\alpha$ such that $\left(\begin{array}{ccc}
X_{i}^{T}\,\,\,\, & \alpha X_{i}^{T}\,\,\, & 0_{1\times n_{2}}\end{array}\right)^{T}$is an eigenvector of $\mathcal{L}(G_{1}\underset{=}{\vee}G_{2})$.
Notice that $\alpha\neq0$, since if $\alpha=0$ then $\lambda_{i}\left(G_{1}\right)=-1$.
\begin{align*}
\mathcal{L}\left(\begin{array}{c}
X_{i}\\
\alpha X_{i}\\
0_{n_{1}\times1}
\end{array}\right) & =\left(\begin{array}{c}
X_{i}-\frac{\lambda_{i}\left(G_{1}\right)}{n_{1}+n_{2}-1}X_{i}+\frac{1+\lambda_{i}\left(G_{1}\right)}{\sqrt{\left(n_{1}+n_{2}-1\right)\left(n_{1}-r_{1}-1\right)}}\alpha X_{i}\\
\\
\frac{1+\lambda_{i}\left(G_{1}\right)}{\sqrt{\left(n_{1}+n_{2}-1\right)\left(n_{1}-r_{1}-1\right)}}X_{i}+\alpha X_{i}\\
\\
0_{n_{1}\times1}
\end{array}\right)\\
\\
 & =\left(\begin{array}{c}
1-\frac{\lambda_{i}\left(G_{1}\right)}{n_{1}+n_{2}-1}+\frac{1+\lambda_{i}\left(G_{1}\right)}{\sqrt{\left(n_{1}+n_{2}-1\right)\left(n_{1}-r_{1}-1\right)}}\alpha\\
\\
\frac{1+\lambda_{i}\left(G_{1}\right)}{\alpha\sqrt{\left(n_{1}+n_{2}-1\right)\left(n_{1}-r_{1}-1\right)}}+1\\
\\
0_{n_{1}\times1}
\end{array}\right)\left(\begin{array}{c}
X_{i}\\
\alpha X_{i}\\
0_{n_{1}\times1}
\end{array}\right).
\end{align*}
 Thus

\begin{equation}
1-\frac{\lambda_{i}\left(G_{1}\right)}{n_{1}+n_{2}-1}+\frac{1+\lambda_{i}\left(G_{1}\right)}{\sqrt{\left(n_{1}+n_{2}-1\right)\left(n_{1}-r_{1}-1\right)}}\alpha=\frac{1+\lambda_{i}\left(G_{1}\right)}{\alpha\sqrt{\left(n_{1}+n_{2}-1\right)\left(n_{1}-r_{1}-1\right)}}+1\label{eq:NNS OF N}
\end{equation}

\[
-\frac{\lambda_{i}\left(G_{1}\right)}{n_{1}+n_{2}-1}+\frac{1+\lambda_{i}\left(G_{1}\right)}{\sqrt{\left(n_{1}+n_{2}-1\right)\left(n_{1}-r_{1}-1\right)}}\alpha=\frac{1+\lambda_{i}\left(G_{1}\right)}{\alpha\sqrt{\left(n_{1}+n_{2}-1\right)\left(n_{1}-r_{1}-1\right)}}
\]

\[
\frac{\alpha^{2}\left(1+\lambda_{i}\left(G_{1}\right)\right)-\left(1+\lambda_{i}\left(G_{1}\right)\right)}{\alpha\sqrt{\left(n_{1}+n_{2}-1\right)\left(n_{1}-r_{1}-1\right)}}=\frac{\lambda_{i}\left(G_{1}\right)}{n_{1}+n_{2}-1}
\]

\[
\left(1+\lambda_{i}\left(G_{1}\right)\right)\sqrt{n_{1}+n_{2}-1}\alpha^{2}-\lambda_{i}\left(G_{1}\right)\sqrt{n_{1}-r_{1}-1}\alpha-\left(1+\lambda_{i}\left(G_{1}\right)\right)\sqrt{n_{1}+n_{2}-1}=0,
\]
 so

\[
\alpha_{1,2}=\frac{\lambda_{i}\left(G_{1}\right)\sqrt{n_{1}-r_{1}-1}}{2\left(1+\lambda_{i}\left(G_{1}\right)\right)\sqrt{n_{1}+n_{2}-1}}\mp\sqrt{\frac{\lambda_{i}\left(G_{1}\right)^{2}\left(n_{1}-r_{1}-1\right)}{4\left(1+\lambda_{i}\left(G_{1}\right)\right)^{2}\left(n_{1}+n_{2}-1\right)}+1.}
\]

Substituting the values of $\alpha$ in the right side of (\textbf{\ref{eq:NNS OF N}}),
we get by Remark \textbf{\ref{rem:ALQl}} that 

$1+\frac{2\left(1+r_{1}-r_{1}\delta_{i}\left(G_{1}\right)\right)^{2}}{r_{1}\left(1-\delta_{i}\left(G_{1}\right)\right)\left(n_{1}-r_{1}-1\right)\mp\sqrt{\left(n_{1}-r_{1}-1\right)\left[r_{1}^{2}\left(1-\delta_{i}\left(G_{1}\right)\right)^{2}\left(n_{1}-r_{1}-1\right)+4\left(1+r_{1}-r_{1}\delta_{i}\left(G_{1}\right)\right)^{2}\left(n_{1}+n_{2}-1\right)\right]}}$
are eigenvalues of $\mathcal{L}\left(G_{1}\underset{=}{\vee}G_{2}\right)$
for each $i=2,3,...,n_{1}.$ 

So far we obtain $n_{2}-1+2\left(n_{1}-1\right)=2n_{1}+n_{2}-3$ eigenvalues
of $\mathcal{L}\left(G_{1}\underset{=}{\vee}G_{2}\right)$. The corresponding
eigenvectors are orthogonal to $\left(\mathbf{1}_{n_{i}}^{T},0_{1\times n_{1}},0_{1\times n_{2}}\right)^{T},$$\left(0_{1\times n_{1}},\mathbf{1}_{n_{i}}^{T},0_{1\times n_{2}}\right)^{T}$
and $\left(0_{1\times n_{1}},0_{1\times n_{1}},\mathbf{1}_{n_{2}}^{T}\right)^{T}.$
To find three additional eigenvalues, we look for eigenvectors of
$\mathcal{L}\left(G_{1}\underset{=}{\vee}G_{2}\right)$ of the form
$Y=\left(\alpha\mathbf{1}_{n_{1}}^{T},\beta\mathbf{1}_{n_{1}}^{T},\gamma\mathbf{1}_{n_{2}}^{T}\right)^{T}$
for $\left(\alpha,\beta,\gamma\right)\neq\left(0,0,0\right)$. Let
$x$ be an eigenvalue of $\mathcal{L}\left(G\underset{=}{\vee}G_{2}\right)$
corresponding to the eigenvector $Y$ . From $\mathcal{L}Y=xY$ we
get,
\begin{equation}
\alpha-\frac{\alpha r_{1}}{n_{1}+n_{2}-1}+\frac{\left(1+r_{1}-n_{1}\right)\beta}{\sqrt{n_{1}+n_{2}-1}\sqrt{n_{1}-r_{1}-1}}-\frac{n_{2}\gamma}{\sqrt{\left(n_{1}+n_{2}-1\right)\left(r_{2}+n_{1}\right)}}=\alpha x
\end{equation}
 
\begin{equation}
\frac{\left(1+r_{1}-n_{1}\right)\alpha}{\sqrt{n_{1}-r_{1}-1}\sqrt{n_{1}+n_{2}-1}}+\beta=\beta x
\end{equation}

\begin{equation}
\frac{-n_{1}\alpha}{\sqrt{r_{2}+n_{1}}\sqrt{n_{1}+n_{2}-1}}+\gamma-\frac{r_{2}\gamma}{r_{2}+n_{1}}\gamma x
\end{equation}
 Thus
\[
\alpha-\frac{\alpha r_{1}}{n_{1}+n_{2}-1}+\frac{\alpha(n_{1}-1-r_{1})}{\left(n_{1}+n_{2}-1\right)\left(x-1\right)}+\frac{\alpha(n_{1}n_{2})}{\left(n_{1}+n_{2}-1\right)\left(xr_{2}+n_{1}(x-1)\right)}=\alpha x.
\]
Notice that $\alpha\neq0$, since if $\alpha=0$ then $\alpha=\beta=\gamma=0$
and also $x\neq1$ since $x=1$ implies that $\alpha=0$.

Dividing by $\alpha$, we get 
\[
1-\frac{r_{1}}{n_{1}+n_{2}-1}+\frac{n_{1}-1-r_{1}}{\left(n_{1}+n_{2}-1\right)\left(x-1\right)}+\frac{n_{1}n_{2}}{\left(n_{1}+n_{2}-1\right)\left(xr_{2}+n_{1}(x-1)\right)}=x.
\]
Then, 
\begin{align*}
(n_{1}^{2}+n_{1}n_{2}-n_{1})(x-1)^{3}+(r_{2}n_{1}x+r_{2}n_{2}x-r_{2}x+r_{1}n_{1})(x-1)^{2}\\
+(r_{1}r_{2}x-n_{1}^{2}+n_{1}+r_{1}n_{1}-n_{1}n_{2})(x-1)-r_{2}(n_{1}-1-r_{1})x=0,
\end{align*}
so, by simple calculation we see that x is a root of the cubic equation
\begin{align*}
 & \left(n_{1}^{2}+n_{1}n_{2}-n_{1}+r_{2}n_{1}+r_{2}n_{2}-r_{2}\right)x^{3}-\left(3n_{1}^{2}+3n_{1}n_{2}-3n_{1}-r_{1}n_{1}+2r_{2}n_{1}+2r_{2}n_{2}-2r_{2}-r_{1}r_{2}\right)x^{2}\\
 & +\left(2n_{1}^{2}+2n_{1}n_{2}-2n_{1}-r_{1}n_{1}+r_{2}n_{2}\right)x=0
\end{align*}
and this completes the proof of (a).

\textbf{(b)} The proof of (i) is similar to the proof of (i) in (a).
Now we prove (ii), If $S=\left\{ 2,3,\ldots,n_{1}\right\} $, then
$\delta_{j}\left(G_{1}\right)=1+\frac{1}{r_{1}}$ for each $i=2,3,\ldots,n_{1}$,
so $\lambda_{i}\left(G_{1}\right)=-1$ for each $i=2,3,\ldots,n_{1}$,
i.e. $G_{1}=K_{n_{1}}$and $r_{1}=n_{1}-1$. So the normalized Laplacian
matrix of $G_{1}\underset{=}{\vee}G_{1}$ as follows:

\[
\mathcal{L}\left(G_{1}\underset{=}{\vee}G_{2}\right)=\left(\begin{array}{ccc}
I_{n_{1}}-\frac{A\left(G_{1}\right)}{n_{1}+n_{2}-1}\,\,\,\,\,\,\,\, & O_{n_{1}\times n_{1}}\,\,\,\,\,\,\,\, & \frac{-J_{n_{1}\times n_{2}}}{\sqrt{\left(n_{1}+n_{2}-1\right)\left(r_{2}+n_{1}\right)}}\\
\\
O_{n_{1}\times n_{1}} & O_{n_{1}\times n_{1}} & O_{n_{1}\times n_{2}}\\
\\
\frac{-J_{n_{2}\times n_{1}}}{\sqrt{\left(n_{1}+n_{2}-1\right)\left(r_{2}+n_{1}\right)}} & O_{n_{2}\times n_{1}} & I_{n_{2}}-\frac{A\left(G_{2}\right)}{r_{2}+n_{1}}
\end{array}\right).
\]

For $i=2,...,n_{1}$, let $X_{i}$ be an eigenvector of $A(G_{1})$
corresponding to the eigenvalue $\lambda_{i}\left(G_{1}\right)=-1$.
So $\left(\begin{array}{ccc}
X_{i}^{T}\,\,\,\, & 0_{1\times n_{1}}\,\,\, & 0_{1\times n_{2}}\end{array}\right)^{T}$ is an eigenvector of $\mathcal{L}(G_{1}\underset{=}{\vee}G_{2})$
corresponding to the eigenvalue $1+\frac{1}{n_{1}+n_{2}-1}$ and $\left(\begin{array}{ccc}
0_{1\times n_{1}}\,\,\,\, & X_{i}^{T}\,\,\, & 0_{1\times n_{2}}\end{array}\right)^{T}$ is an eigenvector of $\mathcal{L}(G_{1}\underset{=}{\vee}G_{2})$
corresponding to the eigenvalue $0$ because,\\

$\mathcal{L}\left(\begin{array}{c}
X_{i}\\
0_{n_{1}\times1}\\
0_{n_{2}\times1}
\end{array}\right)=\left(\begin{array}{c}
X_{i}-\frac{\lambda_{i}\left(G_{1}\right)X_{i}}{n_{1}+n_{2}-1}\\
0_{n_{1}\times1}\\
0_{n_{2}\times1}
\end{array}\right)=\left(1+\frac{1}{n_{1}+n_{2}-1}\right)\left(\begin{array}{c}
X_{i}\\
0_{n_{1}\times1}\\
0_{n_{2}\times1}
\end{array}\right)$ and $\mathcal{L}\left(\begin{array}{c}
0_{n_{1}\times1}\\
X_{i}\\
0_{n_{2}\times1}
\end{array}\right)=\left(\begin{array}{c}
0_{n_{1}\times1}\\
0_{n_{1}\times1}\\
0_{n_{2}\times1}
\end{array}\right)$. \\

Therefore, $(1+\frac{1}{n_{1}+n_{2}-1})^{[n_{1}-1]}$ and $0^{[n_{1}-1]}$
are eigenvalues of $\mathcal{L}(G_{1}\underset{=}{\vee}G_{2})$.

So far, we obtained $n_{2}-1+2\left(n_{1}-1\right)=2n_{1}+n_{2}-3$
eigenvalues of $\mathcal{L}\left(G_{1}\underset{=}{\vee}G_{2}\right)$.
Their eigenvectors are orthogonal to $\left(\mathbf{1}_{n_{1}}^{T},0_{1\times n_{1}},0_{1\times n_{2}}\right)^{T},$$\left(0_{1\times n_{1}},1_{n_{1}}^{T},0_{1\times n_{2}}\right)^{T}$
and $\left(0_{1\times n_{1}},0_{1\times n_{1}},1_{n_{2}}^{T}\right)^{T}.$
To find three additional eigenvalues, we look for eigenvectors of
$\mathcal{L}\left(G\underset{=}{\vee}G_{2}\right)$ of the form $Y=\left(\alpha\mathbf{1}_{n_{1}}^{T},\beta\mathbf{1}_{n_{1}}^{T},\gamma\mathbf{1}_{n_{2}}^{T}\right)^{T}$for
$\left(\alpha,\beta,\gamma\right)\neq\left(0,0,0\right)$. Let $x$
be an eigenvalue of $\mathcal{L}\left(G\underset{=}{\vee}G_{2}\right)$
corresponding to the eigenvector $Y$ . Then from $\mathcal{L}Y=xY$
we get 

\begin{equation}
\alpha-\frac{r_{1}\alpha}{n_{1}+n_{2}-1}-\frac{\gamma n_{2}}{\sqrt{\left(n_{1}+n_{2}-1\right)\left(r_{2}+n_{1}\right)}}=\alpha x
\end{equation}

\begin{equation}
\beta x=0
\end{equation}

\begin{equation}
\frac{-\alpha n_{1}}{\sqrt{\left(n_{1}+n_{2}-1\right)\left(r_{2}+n_{1}\right)}}+\gamma-\frac{r_{2}\gamma}{r_{2}+n_{1}}=\gamma x
\end{equation}

If $\beta\neq0$, then $\left(0,\beta,0\right)$ is a one of the solutions
of the three above equations, so $\left(0_{1\times n_{1}},\beta\mathbf{1}_{n_{1}}^{T},0_{1\times n_{2}}\right)^{T}$
is an eigenvector corresponding to the eigenvalue $0$. On the other
hand if $\beta=0$ we get 
\begin{equation}
\alpha\left(1-x-\frac{r_{1}}{n_{1}+n_{2}-1}\right)=\frac{\gamma n_{2}}{\sqrt{\left(n_{1}+n_{2}-1\right)\left(r_{2}+n_{1}\right)}}
\end{equation}

\begin{equation}
\gamma\left(1-x-\frac{r_{2}}{r_{2}+n_{1}}\right)=\frac{\alpha n_{1}}{\sqrt{\left(n_{1}+n_{2}-1\right)\left(r_{2}+n_{1}\right)}}
\end{equation}

By solving above two equations we get the equation 
\[
\left(r_{2}+n_{1}\right)\left(n_{1}+n_{2}-1\right)x^{2}+\left(n_{1}-r_{2}n_{2}-2n_{1}n_{2}-n_{1}^{2}\right)x=0,
\]
 whose roots are $0$ and $\frac{n_{1}^{2}+2n_{1}n_{2}+r_{2}n_{2}-n_{1}}{(r_{2}+n_{1})(n_{1}+n_{2}-1)}$.

This completes the proof of (b).

\textbf{(c)} The proofs of (i), (ii) and (iv) are similar to the proofs
of (i), (ii) and (iii) of (a), respectively. Now we prove (iii), Let
$S$$\neq\varPhi$ and $S\neq\left\{ 2,3,\ldots,n_{1}\right\} $.
If $i\in S$, then $1+\frac{1}{n_{1}+n_{2}-1}$ and $1$ are eigenvalues
of $\mathcal{L}\left(G\underset{=}{\vee}G_{2}\right)$ because if
$X_{i}$ is an eigenvector corresponding to the eigenvalue $\delta_{i}(G_{1})$,
then\\

$\mathcal{L}\left(\begin{array}{c}
X_{i}\\
0_{n_{1}\times1}\\
0_{n_{2}\times1}
\end{array}\right)=\left(\begin{array}{c}
X_{i}+\frac{X_{i}}{n_{1}+n_{2}-1}\\
0_{n_{1}\times1}\\
0_{n_{2}\times1}
\end{array}\right)=\left(1+\frac{1}{n_{1}+n_{2}-1}\right)\left(\begin{array}{c}
X_{i}\\
0_{n_{1}\times1}\\
0_{n_{2}\times1}
\end{array}\right)$ and $\mathcal{L}\left(\begin{array}{c}
0_{n_{1}\times1}\\
X_{i}\\
0_{n_{2}\times1}
\end{array}\right)=\left(\begin{array}{c}
0_{n_{1}\times1}\\
X_{i}\\
0_{n_{2}\times1}
\end{array}\right)$.\\

So, $1^{[n(S)]}$ and $(1+\frac{1}{n_{1}+n_{2}-1})^{[n(S)]}$ are
eigenvalues of $\mathcal{L}\left(G\underset{=}{\vee}G_{2}\right)$
and this completes the proof of (c). 
\end{proof}
Now we can give another answer to Question \textbf{\ref{que:Construct-non-regular}}
by constructing several pairs of non regular $\left\{ A,L,Q,\mathcal{L}\right\} $NICS
graphs.
\begin{cor}
\textup{\label{thm:result}}\textup{\emph{Let $G_{1}$, $H_{1}$ be
cospectral regular graphs and $G_{2}$, $H_{2}$ be non isomorphic,
regular and cospectral graphs. Then $G_{1}\underset{=}{\lor}G_{2}$
and $H_{1}\underset{=}{\lor}H_{2}$ are non regular $\left\{ A,L,Q,\mathcal{L}\right\} $NICS.}}
\end{cor}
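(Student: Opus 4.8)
The plan is to read off the four spectra of $G_{1}\underset{=}{\vee}G_{2}$ from Theorems~\ref{thm:A1}, \ref{thm:L1}, \ref{thm:Q1} and \ref{thm: normalized Laplacian  of NNS for regular graph}, to observe that in every case the spectrum depends only on data that cospectral regular graphs automatically share, and then to treat non-regularity and non-isomorphism separately. First I would record the consequences of the hypotheses. Since $G_{1}$ and $H_{1}$ are regular and cospectral, they have the same number of vertices $n_{1}$, the same degree $r_{1}$ (e.g.\ $r_{1}=2|E|/n_{1}$ is read off from the spectrum), and the same multiset of adjacency eigenvalues; by the equivalence for regular graphs quoted in the Introduction from \cite{van2003graphs} (equivalently, by Remark~\ref{rem:ALQl}) they then also have the same $L$-, $Q$- and $\mathcal{L}$-spectra. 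The same applies to $G_{2},H_{2}$ with parameters $n_{2},r_{2}$. In particular the index set $S=\{\,i:\delta_{i}(G_{1})=1+\tfrac{1}{r_{1}}\,\}=\{\,i:\lambda_{i}(G_{1})=-1\,\}$ and its cardinality $n(S)$, which appear in Theorem~\ref{thm: normalized Laplacian  of NNS for regular graph}, are spectral invariants of $G_{1}$ and hence coincide with those of $H_{1}$.

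Next I would substitute into the four spectrum theorems. For each $X\in\{A,L,Q\}$, Theorems~\ref{thm:A1}, \ref{thm:L1} and \ref{thm:Q1} express the $X$-spectrum of $G_{1}\underset{=}{\vee}G_{2}$ purely in terms of $n_{1},n_{2},r_{1},r_{2}$, the $X$-eigenvalues of $G_{2}$ with indices $2,\dots,n_{2}$, the $X$-eigenvalues of $G_{1}$ with indices $2,\dots,n_{1}$ (entering through the listed quadratics), and one fixed cubic in $n_{1},n_{2},r_{1},r_{2}$. By the previous paragraph all of these inputs are the same for $H_{1}\underset{=}{\vee}H_{2}$, so the two joins are $X$-cospectral. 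For $X=\mathcal{L}$ the same holds case by case in Theorem~\ref{thm: normalized Laplacian  of NNS for regular graph}: the trichotomy $S=\emptyset$ / $S=\{2,\dots,n_{1}\}$ / otherwise is decided by the adjacency spectrum of $G_{1}$, the eigenvalues listed in (i)--(iv) depend only on $n_{1},n_{2},r_{1},r_{2}$, the normalized-Laplacian eigenvalues of $G_{1}$ and $G_{2}$ and the number $n(S)$, and the cubic again involves only the four parameters. Hence $G_{1}\underset{=}{\vee}G_{2}$ and $H_{1}\underset{=}{\vee}H_{2}$ are $\{A,L,Q,\mathcal{L}\}$-cospectral.

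It then remains to check that the two graphs are non-regular and non-isomorphic. Non-regularity is immediate from the degree list computed before Theorem~\ref{thm:L1}: the split vertices have degree $n_{1}-r_{1}-1$ while the vertices coming from $G_{1}$ have degree $n_{1}+n_{2}-1$, and these differ since $n_{2}\ge 1$. For non-isomorphism, I would first dispose of a degenerate case: if $G_{2}$ were complete, then $H_{2}$, being cospectral with $G_{2}$, would be complete on the same number of vertices and hence isomorphic to $G_{2}$, contradicting the hypothesis; so $G_{2}$ is not complete, i.e.\ $r_{2}<n_{2}-1$. (The case $n_{2}=1$ is similar.) Then the three degrees $n_{1}-r_{1}-1<n_{1}+r_{2}<n_{1}+n_{2}-1$ occurring in $G_{1}\underset{=}{\vee}G_{2}$ are pairwise distinct, and the same values occur in $H_{1}\underset{=}{\vee}H_{2}$ since the two joins have the same parameters. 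Any isomorphism would therefore map the degree-$(n_{1}+r_{2})$ vertices of $G_{1}\underset{=}{\vee}G_{2}$, which induce a copy of $G_{2}$, onto the degree-$(n_{1}+r_{2})$ vertices of $H_{1}\underset{=}{\vee}H_{2}$, which induce a copy of $H_{2}$; this would force $G_{2}\cong H_{2}$, a contradiction.

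I expect the main point requiring care to be the normalized-Laplacian case: one must verify that the choice among the three cases of Theorem~\ref{thm: normalized Laplacian  of NNS for regular graph}, the multiplicity $n(S)$, and every eigenvalue in the lists are genuine functions of the shared spectral data, and one must make sure the small degenerate configurations in the non-isomorphism argument ($G_{2}$ complete, $n_{2}=1$) really do collapse harmlessly, as indicated above. Everything else is routine substitution into the four theorems of this section.
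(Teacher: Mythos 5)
Your proposal is correct and follows essentially the same route as the paper: cospectrality of the joins is read off from Theorems \ref{thm:A1}, \ref{thm:L1}, \ref{thm:Q1} and the normalized Laplacian theorem for NNS joins (noting that all inputs to those formulas are shared by cospectral regular graphs), and non-isomorphism is reduced to the non-isomorphism of $G_{2}$ and $H_{2}$. Your degree-partition justification of that reduction, and your explicit checks of non-regularity and of the degenerate cases ($G_{2}$ complete, $n_{2}=1$), merely flesh out what the paper asserts in one line.
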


\begin{proof}
\emph{$G_{1}\underset{=}{\lor}G_{2}$ }and\emph{ $H_{1}\underset{=}{\lor}H_{2}$}
are non isomorphic since $G_{2}$ and $H_{2}$ are non isomorphic.
By Theorems \textbf{\ref{thm:A1}}, \textbf{\ref{thm:L1}}, \textbf{\ref{thm:Q1}}
and \textbf{\ref{thm: normalized Laplacian  of NNS for regular graph}}
, we get that $G_{1}\underset{=}{\lor}G_{2}$ and $H_{1}\underset{=}{\lor}H_{2}$
are non regular $\left\{ A,L,Q,\mathcal{L}\right\} $NICS. 
\end{proof}
\begin{example}
Let $G_{1}=H_{1}=C_{4}$, and choose $G_{2}=G$ and $H_{2}=H$ where
$G$ and $H$ are graphs in Figure \textbf{\ref{fig:Two-regular-non isomorphic cospectral  10 vertices}},
then the graphs in Figure \textbf{\ref{fig:Non-regular of NNS (A,L ,Q ,l)}}
are $\left\{ A,L,Q,\mathcal{L}\right\} $NICS. 

\begin{figure}[h]
\begin{centering}
\subfloat[$C_{4}\protect\underset{=}{\vee}G_{2}$]{\includegraphics[width=0.5\textwidth]{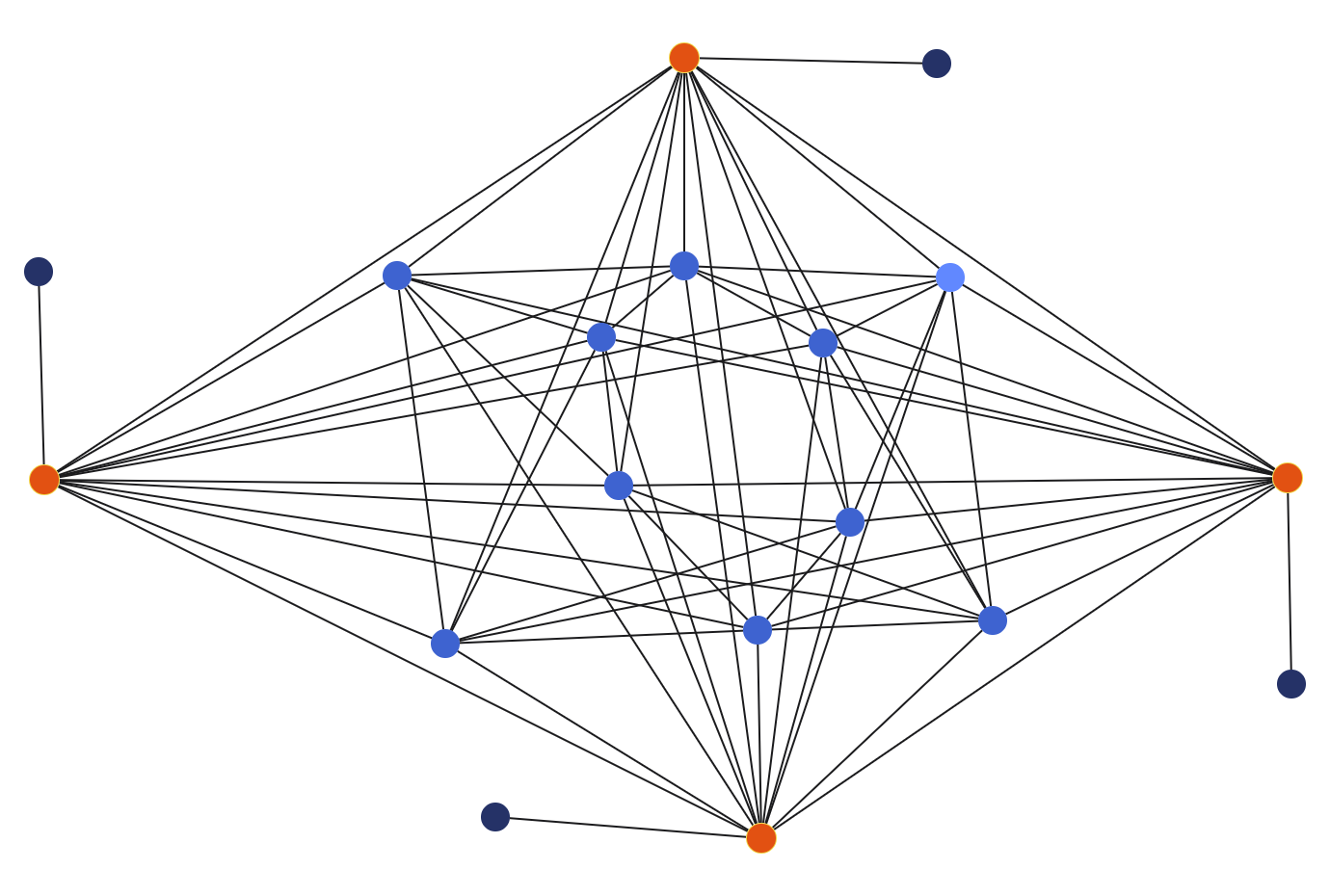}

}\subfloat[$C_{4}\protect\underset{=}{\vee}H_{2}$]{\includegraphics[width=0.5\textwidth]{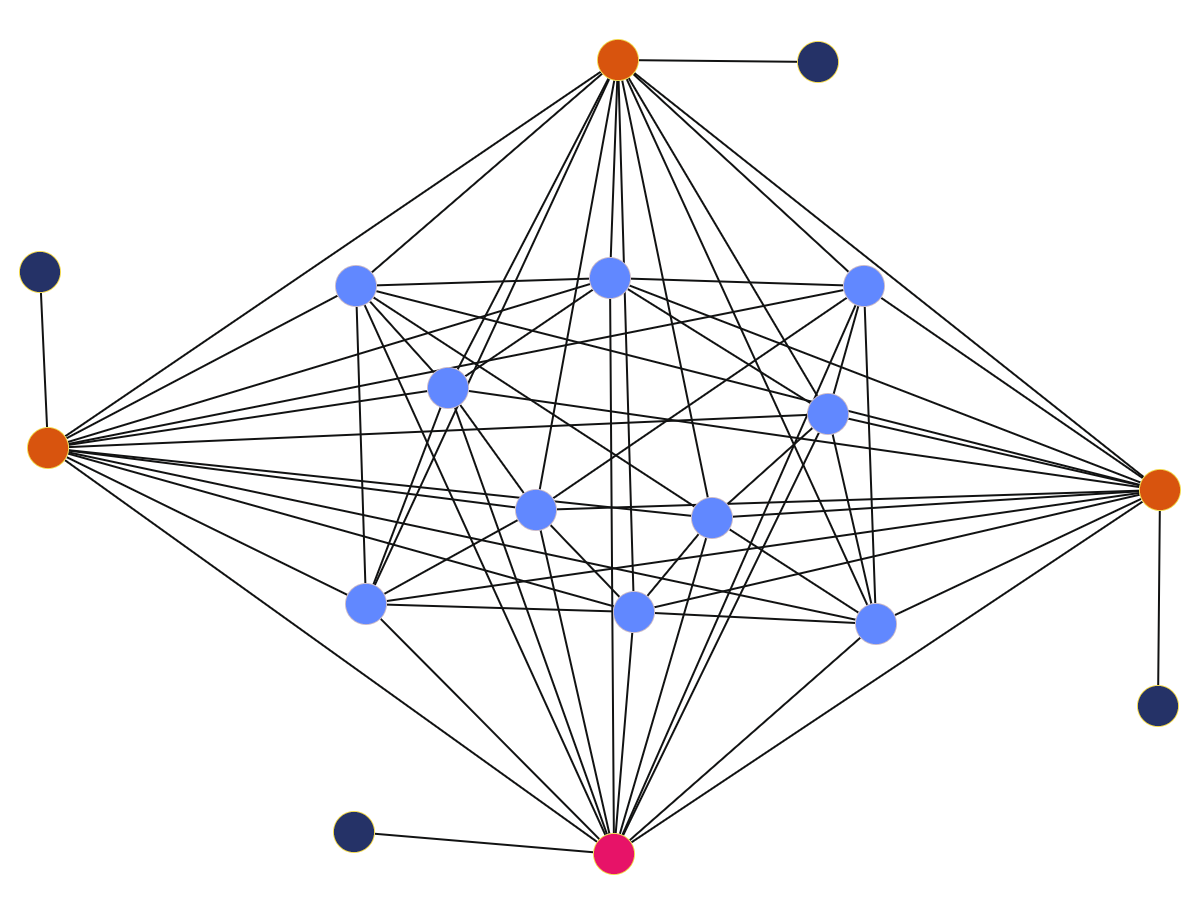}
}

\par\end{centering}
\caption{\label{fig:Non-regular of NNS (A,L ,Q ,l)}Non regular $\left\{ A,L,Q,\mathcal{L}\right\} $NICS
graphs. }

\newpage{}
\end{figure}
\end{example}

\bibliographystyle{plain}
\bibliography{Ref}

\end{document}